\documentclass{amsart}
\usepackage{amsmath,amsfonts,amssymb,amscd,verbatim,multicol}
\usepackage[arrow,matrix,cmtip,curve]{xy}
\usepackage{lscape}
\usepackage{mathrsfs}  
\usepackage{tikz,tikz-cd}
\usepackage{hyperref}

\usepackage{fullpage}
\allowdisplaybreaks

\numberwithin{equation}{section}
\theoremstyle{definition}
\newtheorem{theorem}[equation]{Theorem}

\newtheorem{corollary}[equation]{Corollary} 
\newtheorem{definition}[equation]{Definition} 
\newtheorem{example}[equation]{Example}
\newtheorem{lemma}[equation]{Lemma}
\newtheorem{proposition}[equation]{Proposition}

\newtheorem{remark}[equation]{Remark}

\newtheorem{thmintro}{Theorem}

\DeclareMathOperator\Aut{Aut}

\DeclareMathOperator\Ext{Ext}

\DeclareMathOperator\gr{gr}
\DeclareMathOperator\GKdim{GKdim}
\DeclareMathOperator\gldim{gldim}

\DeclareMathOperator\pdim{pdim}

\DeclareMathOperator\tot{tot}

\DeclareMathOperator\soc{soc}

\renewcommand\int{\mathrm{int}}

\newcommand\inv{^{-1}}

\newcommand\iso{\cong}
\newcommand\tensor{\otimes}
\newcommand\kk{\Bbbk}

\newcommand{\op}{\mathrm{op}}

\newcommand\cA{\mathcal A}
\newcommand\cB{\mathcal B}
\newcommand\cC{\mathcal C}
\newcommand\cD{\mathcal D}
\newcommand\cF{\mathcal F}
\newcommand\cH{\mathcal H}

\newcommand\cR{\mathcal R}

\newcommand\NN{\mathbb N}

\newcommand\ZZ{\mathbb Z}

\newcommand\bp{\mathbf p}
\newcommand\bq{\mathbf q}

\newcommand\fs{\mathfrak s}
\newcommand\ft{\mathfrak t}

\newcommand\dblQ{\overline Q}
\newcommand\tldQ{\widetilde Q}
\newcommand\hatQ{\widehat Q}
\newcommand\fQ{\mathfrak Q}

\begin{document}

\title{Normal extensions of twisted graded Calabi--Yau algebras}

\author[Gaddis]{Jason Gaddis}
\author[Keeler]{Dennis Keeler}
\address{Miami University, Department of Mathematics, Oxford, Ohio 45056} 
\email{gaddisj@miamioh.edu,keelerds@miamioh.edu}

\subjclass{
16E65,   	%Homological conditions on associative rings
16S38,  	%Rings arising from noncommutative algebraic geometry
16P90,   	%Growth rate, Gelfand-Kirillov dimension
16W50,   	%Graded rings and modules (associative rings and algebras)
16P40   	%Noetherian rings and modules (associative rings and algebras) 
}
\keywords{Twisted Calabi--Yau algebra, normal extensions, derivation-quotient algebra, piecewise domain}

\begin{abstract}
We discuss three families of twisted graded Calabi--Yau algebras of dimension three that arise through the process of normal extensions. In addition to the twisted Calabi--Yau condition, we prove that these algebras are noetherian piecewise domains under a non-degeneracy condition. We show that these algebras can also be constructed as certain iterated skew polynomial rings. Finally, we discuss how these algebras shed light on possible types of twisted graded Calabi--Yau algebras of dimension four.
\end{abstract}

\maketitle

Throughout, let $\kk$ be an algebraically closed field of characteristic zero. Let $\kk^\times$ denote the group of units of $\kk$. All algebras will be assumed to be $\kk$-algebras. Let $\delta_{ij}$ denote the Kronecker-delta function.

In \cite{GLNW,GR1}, the authors produced a classification of quivers that may support twisted-graded Calabi--Yau algebras of (global) dimension three and finite Gelfand--Kirillov dimension. In many cases, it is possible to produce an explicit example of such an algebra on a given quiver. The most common method is to form a skew group ring $A\#\kk G$ where $A$ is an Artin--Schelter (AS) regular algebra of dimension three and $G$ is a finite subgroup of $\Aut_{\gr}(A)$. Other useful methods include Ore extensions and graded (Zhang) twists. 

In this paper, we show that the method of normal extensions can be used to produce several families of twisted graded Calabi--Yau algebras of dimension three. 
By the work of Zhou and Shen, the twisted Calabi--Yau property lifts to normal extensions \cite{ZSL}. Our techniques follow the work of Chirvasitu, Kanda, and Smith \cite{CKS}, wherein the authors studied normal extensions of Artin--Schelter regular algebras. We are inspired by the following example.

\begin{example}\label{ex.downup}
Let $q \in \kk^\times$ and let $\omega = xy-qyx \in \kk\langle x,y \rangle$. Then $A=\kk\langle x,y \rangle/(\omega)$ is an Artin--Schelter regular algebra of global dimension two. In particular, it is a \emph{quantum plane}. For $p_1,p_2 \in \kk^\times$, define
\begin{align}\label{eq.Bdim1}
    B = \frac{\kk\langle x,y \rangle}{(x\Omega-p_1\Omega x, y\omega-p_2\omega y)}.
\end{align}
If $p_2=p_1\inv$, then $B$ is Artin--Schelter regular of global dimension three. In fact, the resulting algebra is a \emph{(graded) down-up algebra} and all such down-up algebras that are Artin--Schelter regular can be realized this way. Moreover, $\omega$ is normal in $B$ and $A \iso B/(\omega)$. For a generic choice of $p_1,p_2$, $B$ will not be Artin--Schelter regular.

For $p,q \in \kk^\times$, define the \emph{quantum Heisenberg algebra} as
\[ H_{p,q} = \frac{\kk\langle x,y,z \rangle}{(xz-pxz, yz-p\inv zy, xy-qyx-z)}.\]
The algebra $H_{1,1}$ is the enveloping algebra of the Heisenberg Lie algebra. In general, $H_{p,q}$ is Artin--Schelter regular of global dimension three with grading given by $\deg(x)=\deg(y)=1$ and $\deg(z)=2$. It can be realized as an iterated Ore extension and so $H_{p,q}$ is a noetherian domain. The element $z$ is normal and regular, but also superfluous. Substituting $xy-qyx$ for $z$ in the other two relations gives the algebra $B$ in \eqref{eq.Bdim1} with $p_1=p$ and $p_2=p\inv$.
\end{example}

In earlier work \cite{GK1}, the authors presented a family of algebras that generalize the down-up algebras, which were originally defined by Benkart and Roby \cite{BRdu,BRduADD}.

\begin{definition}\label{defn.qdu}
Let $\dblQ$ be the quiver: 
\begin{equation}\label{eq.dblA} 
\begin{tikzcd}
    &         &   	& e_0 \arrow[llld, "u_0"] \arrow[rrrd, "d_{n-1}", shift left=2] 	&           &		&          \\
e_1 \arrow[rr, "u_1"] \arrow[rrru, "d_0", shift left=2] 	&        	&                   	e_2 \arrow[r, "u_2"]	\arrow[ll, "d_1", shift left=2]			 		& \cdots   \arrow[r, "u_{n-3}"]  \arrow[l, "d_2", shift left=2] &	e_{n-2} \arrow[rr, "u_{n-2}"]	\arrow[l, "d_{n-3}", shift left=2] &           	& e_{n-1} \arrow[lllu, "u_{n-1}"] \arrow[ll, "d_{n-2}", shift left=2]
\end{tikzcd}
\end{equation}
Fix $n \in \ZZ_+$ and let $\alpha,\beta,\gamma \in \kk^n$.
The corresponding \emph{quiver down-up algebra of type $A$} is defined as the quotient of $\kk\dblQ$ by the relations
\begin{equation}\label{eq:relationsA}
\begin{aligned}
	d_{i-1}u_{i-1}u_i &= \alpha_i u_id_iu_i + \beta_i u_iu_{i+1}d_{i+1} +\gamma_i u_i \\
	d_id_{i-1}u_{i-1} &= \alpha_i d_iu_id_i + \beta_i u_{i+1}d_{i+1}d_i +\gamma_i d_i,
\end{aligned}
\end{equation}
for $i\in \dblQ_0$. We denote this algebra by $\cH_n(\alpha,\beta,\gamma)$. If $\gamma_i=0$ for all $i \in \dblQ_0$, then we write $\cH_n(\alpha,\beta)$.
\end{definition}

Throughout, we interpret paths on the quiver \eqref{eq.dblA} and others modulo $n$. The reason for the notation $\dblQ$ is that the quiver \eqref{eq.dblA} is the double of the extended Dynkin quiver of type $\widetilde{A_{n-1}}$, see \eqref{eq.typeA}.

On one vertex, the $\cH_n(\alpha,\beta,\gamma)$ recover the algebras from Example \ref{ex.downup}. In \cite{GK1}, the authors showed that the $\cH_n(\alpha,\beta,\gamma)$ are twisted Calabi--Yau if and only if $\beta_i\neq 0$ for all $i$. In this work, we primarily focus on the $\cH_n(\alpha,\beta)$, which are $\NN$-graded, and we show that certain ones appear as normal extensions of twisted preprojective algebras of type A. This gives an alternate way to realize certain results in \cite{GK1}. The construction we give is broad enough to apply to other large families of twisted graded Calabi--Yau algebras that we discuss now.

\begin{definition}\label{defn.qDU_B}
For $n \geq 2$, set $\tldQ$ to be the quiver:
\begin{equation}\label{eq.Bn}
\begin{tikzcd}
    &   & e_0 \arrow[out=120,in=60,loop,"b_0"]\arrow[lld,"a_0"']   &   &   \\
e_1 \arrow[r,"a_1"'] \arrow[out=240,in=300,loop,swap,"b_1"] & 
e_2 \arrow[r,"a_2"'] \arrow[out=240,in=300,loop,swap,"b_2"] & 
\cdots \arrow[r,"a_{n-3}"'] & 
e_{n-2} \arrow[r,"a_{n-2}"'] \arrow[out=240,in=300,loop,swap,"b_{n-2}"] & e_{n-1} \arrow[llu,"a_{n-1}"'] \arrow[out=240,in=300,loop,swap,"b_{n-1}"]
\end{tikzcd}
\end{equation}
For $\alpha,\beta\in \kk$, the corresponding \emph{quiver down-up algebra of type B} is defined as the quotient of $\kk\tldQ$ by the relations
\begin{align*}
	b_ia_ia_{i+1} &= \alpha a_ib_{i+1}a_{i+1} + \beta a_ia_{i+1}b_{i+2} \\
    b_i^2a_i &= \alpha b_ia_ib_{i+1} + \beta a_ib_{i+1}^2,
\end{align*}
for $i \in \tldQ_0$. We denote this algebra by $\cB_n(\alpha,\beta)$.
\end{definition}

\begin{definition}\label{defn.qDU_C}
For $n \geq 2$, set $\hatQ$ to be the quiver:
\begin{equation}\label{eq.typeC}
\begin{tikzcd}
& & 
e_0 \arrow[lld,bend right,"b_0"'] \arrow[lld,"a_0"'] & & \\
e_1 \arrow[r,bend right,swap,"b_1"] \arrow[r,"a_1"'] & 
e_2 \arrow[r,bend right,swap,"b_2"] \arrow[r,"a_2"'] & 
\cdots \arrow[r,"a_{n-3}"'] \arrow[r,bend right,swap,"b_{n-3}"] & 
e_{n-2} \arrow[r,"a_{n-2}"'] \arrow[r,bend right,swap,"b_{n-2}"] & e_{n-1} \arrow[llu,"a_{n-1}"'] \arrow[llu,bend right,swap,"b_{n-1}"]
\end{tikzcd}
\end{equation}
For $\alpha,\beta\in \kk$, the corresponding \emph{quiver down-up algebra of type C} is defined as the quotient of $\kk\hatQ$ by the relations
\begin{align*}
    b_ia_{i+1}a_{i+2} &= \alpha a_ib_{i+1}a_{i+2} + \beta a_ia_{i+1}b_{i+2} \\
	b_ib_{i+1}a_{i+2} &= \alpha b_ia_{i+1}b_{i+2} + \beta a_ib_{i+1}b_{i+2}
\end{align*}
for $i\in \hatQ_0$. We denote this algebra by $\cC_n(\alpha,\beta)$.
\end{definition}

\begin{thmintro}[Theorems \ref{thm.typeA}, \ref{thm.typeB}, and \ref{thm.typeC}]
\label{thm.down-up}
Fix $n \in \ZZ_+$ and let $\alpha,\beta \in \kk$. Let $\cD$ be one of $\cH_n(\alpha,\beta)$, $\cB_n(\alpha,\beta)$, or $\cC_n(\alpha,\beta)$.
The following are equivalent:
\begin{enumerate}
\item $\beta \neq 0$,
\item \label{Dpwd} $\cD$ is a piecewise domain,
\item \label{Dnoeth} $\cD$ is right (or left) noetherian.
\end{enumerate}
Moreover, if any of the above conditions hold, then $\cD$ is twisted graded Calabi--Yau of dimension three. 
\end{thmintro}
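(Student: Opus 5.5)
The plan is to treat all three families uniformly as normal extensions of twisted preprojective-type algebras, in the spirit of Example \ref{ex.downup} and \cite{CKS}. For each $\cD$, I first exhibit a degree-two element $\Omega=\sum_i \omega_i$ (one summand $\omega_i\in e_i\cD e_{i}$ or between appropriate vertices) with $\cD/(\Omega)\iso A$. Here $A$ is a twisted preprojective algebra of the underlying extended Dynkin quiver: type $\widetilde{A_{n-1}}$ for $\cH_n$, and the corresponding quivers $\tldQ$, $\hatQ$ for $\cB_n$, $\cC_n$. Concretely, for $\cH_n(\alpha,\beta)$ I take $\omega_i = d_{i-1}u_{i-1} - q_i u_id_i$. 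I choose the $q_i$ as roots of $t^2-\alpha t-\beta$, which are nonzero exactly when $\beta\neq0$. For $\cB_n$ I take $\omega_i=b_ia_i-q\,a_ib_{i+1}$, and for $\cC_n$ I take $\omega_i = b_ia_{i+1}-q\,a_ib_{i+1}$. A direct check then shows the defining relations factor as $x\Omega - p\,\Omega x$, as in \eqref{eq.Bdim1}, with $p=\beta/q$ or similar. So $\Omega$ is normal with normalizing automorphism $\sigma$ given by scaling arrows. The factorization $t^2-\alpha t-\beta=(t-q)(t-p')$ is where algebraic closure is used.

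For (1)$\Rightarrow$(rest), I argue as follows. When $\beta\neq 0$ all scalars are nonzero, so $A$ is a twisted Calabi--Yau algebra of dimension two: a twisted preprojective algebra on a non-Dynkin quiver, hence of finite GK dimension, noetherian, and a piecewise domain. I next show $\Omega$ is regular in $\cD$. The approach, following \cite{CKS}, is a Hilbert series comparison: compute the Hilbert series of $\cD$ via the candidate minimal resolution $0\to P(-4)\to P(-3)^{\oplus}\to P(-1)^{\oplus}\to P\to$, which has length three with a Koszul-type shape. I then check that it agrees with $H_A(t)/(1-t^2)$, which forces $\Omega$ to be regular. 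Alternatively, and more robustly, I realize $\cD$ as an iterated Ore extension of matrix type. This matches the later claim in the abstract, and it gives regularity of $\Omega$ together with noetherianity. With regularity in hand, \cite{ZSL} lifts the twisted CY property from $A$ to $\cD$, giving dimension three. Noetherianity lifts because $\cD/(\Omega)$ is noetherian and $\Omega$ is normal and regular: a standard graded argument gives noetherianity modulo a normal regular element of positive degree. The piecewise domain property lifts via $\gr$ with respect to the $\Omega$-adic filtration, since $\gr_\Omega\cD\iso A[\Omega;\sigma]$ is a piecewise domain. Here the filtration is separated because $\cD$ is connected-graded over the vertices and $\deg\Omega>0$.

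For the converse, I assume $\beta=0$. Then the relations become $(b_i - \alpha\,\cdot)\,(\text{stuff})=0$-type factorizations; for instance, in $\cH_n(\alpha,0)$ we get $(d_{i-1}u_{i-1}-\alpha u_id_i)u_i=0$. This produces a nonzero product $xy=0$ with $x,y$ nonzero paths between vertices, so $\cD$ is not a piecewise domain. For non-noetherianity, I exhibit an infinite strictly ascending chain of right ideals. Following the down-up algebra case of Kirkman--Musson--Passman, I consider right ideals generated by $w\,u_i(d u)^k$ for the annihilating element $w=d_{i-1}u_{i-1}-\alpha u_id_i$. Alternatively, I show a factor ring (killing a suitable arrow) is a monomial algebra with exponential growth or an infinite-generation phenomenon. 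Proving these paths are nonzero and independent needs a normal form, for which I would use a Gröbner basis of the relations with $\beta=0$.

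I expect two main obstacles. The first is establishing regularity of $\Omega$, equivalently exactness of the resolution, uniformly across all three quivers. The second is the non-noetherian direction for types B and C, where the explicit chain requires careful Gröbner basis bookkeeping.
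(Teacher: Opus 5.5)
Your overall strategy is the paper's: a degree-two normal element $\omega$ with $\cD/(\omega)$ twisted Calabi--Yau of dimension two, regularity of $\omega$ via Hilbert series, then Theorem~\ref{thm.lift}, Lemma~\ref{lem.hilb}\eqref{hilb3} and a piecewise-domain lift, and for $\beta=0$ a factorization plus an explicit chain. Your $\gr_\omega$ argument is also a fine substitute for Lemma~\ref{lem.PWD2}.

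\textbf{Regularity of $\omega$ is not actually supplied.} Everything rests on this step. Computing $h_\cD$ ``via the candidate minimal resolution'' is circular. That complex is only known to be exact at the presentation end. Its exactness on the left is equivalent to $\cD$ being twisted Calabi--Yau of dimension three, which is what regularity is supposed to deliver. Without exactness it gives only coefficientwise inequalities, not the equality $h_{\cD}^{\tot}=h_A^{\tot}(1-t^2)\inv$ that Lemma~\ref{lem.hilb} needs. The missing ingredient is an independent computation of $h_\cD$. The paper gets it from a Gr\"obner basis valid for \emph{all} $\alpha,\beta$. With the order $b_0>\cdots>b_{n-1}>a_0>\cdots>a_{n-1}$, the only overlaps are $b_i^2a_ia_{i+1}$ (type B) and $b_ib_{i+1}a_{i+2}a_{i+3}$ (type C), and both resolve. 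The resulting bases \eqref{eq.BbasisWithXi} and \eqref{eq.CbasisWithXi}, and \eqref{eq.Aform} in type A, are in bijection with monomials $x^ky^\ell z^m$ with $\deg y=2$. You already plan a Gr\"obner basis for $\beta=0$, so doing it for general $\beta$ closes this gap at no extra cost.

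Your Ore-extension fallback is viable; it is exactly Theorem~\ref{thm.ore} and Proposition~\ref{prop.ore}. As written, though, it is a bare assertion. You would have to build $R=\kk Q[z;\phi]$, extend $\sigma$ to $R$ via $\sigma(z_i)=p_iz_{\sigma(i)}$, check that $\delta(a_i)=z_{i+1}$ is a $\sigma$-derivation, and identify $z_{i+1}$ with $\omega_i$ in $R[x;\sigma,\delta]$. Two minor corrections: in types B and C the normalizing automorphism also rotates vertices ($e_i\mapsto e_{i-1}$, resp.\ $e_{i-2}$), and $p=-\beta/q$.

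\textbf{Your non-noetherian chain for $\beta=0$ is zero.} The identity $wu_i=0$ is precisely the $\beta=0$ relation you wrote one sentence earlier, so every generator $wu_i(du)^k$ vanishes. Since $w$ is killed on the right by the next $u$ (resp.\ $a$), the growing factor must go on the \emph{left}. For types B and C the paper takes $I_s=\sum_{m=1}^sU^m w\cD$, with $U$ a full cycle of $a$'s, so that $w\cD$ is spanned by $w$ times paths not starting with that next arrow. The normal form then shows the basis path $U^{s+1}x_0$ occurs in $U^{s+1}w\in I_{s+1}$ but in no element of $I_s$ (Lemmas~\ref{lem.Bnnoeth} and \ref{lem.Cnnoeth}); type A is quoted from earlier work.

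You also only treat right ideals, but ``right (or left)'' needs failure of left noetherianity as well. The paper gets this from $\cH^{\op}\iso\cH$ and $\cC^{\op}\iso\cC$ (Propositions~\ref{prop.Aop} and \ref{prop.Cop}). In type B, $\cB_n(\alpha,0)^{\op}$ is not of type B, so a separate left chain $J_s=\sum_{m=1}^s\cB\,w\,b_1^m$ is required. Your ``exponential-growth factor ring'' alternative is too unspecified to fill either hole.
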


Theorem \ref{thm.down-up} is proved more generally for the $\cH_n(\alpha,\beta,\gamma)$ in \cite{GK1}. The goal in this work is to study normal extensions in the context of quotients of path algebras where these methods may be applied more broadly to the B and C types.

In Section \ref{sec.background}, we provide necessary background on quivers, path algebras, and the twisted Calabi--Yau condition. We also introduce three families of twisted graded Calabi--Yau algebras of dimension two. The normal extensions of these will give the algebras introduced above. In Section \ref{sec.lift} we provide the general technology for lifting the twisted Calabi--Yau property through normal extensions. In Sections \ref{sec.typeA} and \ref{sec.typeBC}, we consider each of the three families and prove Theorem \ref{thm.down-up}.

Section \ref{sec.ore} presents an alternate proof from the perspective of Ore extensions. Let $Q=\widetilde{A_{n-1}}$ be the extended Dynkin quiver of type A:
\begin{equation}\label{eq.typeA}
\begin{tikzcd}
    & & e_0 \arrow[lld,"a_0"'] &    & \\
e_1 \arrow[r,"a_1"'] & 
e_2 \arrow[r,"a_2"'] & 
\cdots \arrow[r,"a_{n-3}"'] & 
e_{n-2} \arrow[r,"a_{n-2}"'] & 
e_{n-1} \arrow[llu,"a_{n-1}"']
\end{tikzcd}
\end{equation}
Let $n \geq 2$ and let $\sigma$ be a graded automorphism of $\kk Q$. Then there are scalars $q_i \in \kk^\times$ such that $\sigma(e_i) = e_{\sigma(i)}$ and $\sigma(a_i)=q_i a_{\sigma(i)}$ for all $i$. Set $A=\kk Q[x;\sigma]$ and let $\fQ$ be the underlying quiver of $A$ so that $A = \kk\fQ/(\omega)$ where
\[ \omega = \sum_{i \in Q_0} x_ia_i - q_i a_{\sigma(i)}x_{i+1}.\]

\begin{thmintro}[Theorem {\ref{thm.ore}}]
\label{thmintro.ore}
Let $A$ be as above with $n \geq 3$ and suppose that $q_i = q_{\sigma(i-1)}$ for all $i \in Q_0$. Define $\phi \in \Aut_{\gr}(A)$ 
\[ 
\phi(e_i) = e_{\sigma(i-1)}, \quad
\phi(a_i) = p_i a_{\sigma(i-1)}, \quad
\phi(x_i) = p_i\inv x_{\sigma(i-1)},
\] 
where $p_i \in \kk^\times$ and $p_{\sigma(i)}=p_{i+1}$ for all $i \in Q_0$. Then the normal extension 
\[ \cD(\fQ,\omega,\phi) = \kk Q/(\omega a - \phi(a)\omega : a \in \fQ_1)\]
is twisted Calabi--Yau of dimension three.
\end{thmintro}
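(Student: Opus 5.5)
We would prove this by applying the general lifting technology of Section \ref{sec.lift}: if $A=\kk\fQ/(\omega)$ is twisted graded Calabi--Yau of dimension two and $\phi\in\Aut_{\gr}(A)$ satisfies the compatibility that makes $\omega$ a normal regular element of the extension, then $\cD(\fQ,\omega,\phi)$ is twisted Calabi--Yau of dimension three (via the result of Zhou and Shen \cite{ZSL} that the twisted CY property lifts along normal extensions $\cD\to\cD/(\omega)\iso A$). So the proof reduces to three checks.

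\textbf{Step 1: $A$ is twisted CY of dimension two.} Here $A=\kk Q[x;\sigma]$ is a graded Ore extension of the path algebra $\kk Q$ of $\widetilde{A_{n-1}}$. Since $\kk Q$ is hereditary and, for $n\ge 2$, twisted CY of dimension one in the graded sense, the standard Ore extension result gives that $A$ is twisted CY of dimension two. Equivalently, we can identify $A$ as one of the twisted preprojective-type algebras of dimension two introduced in Section \ref{sec.background}, with twisted potential $\omega$. From this we record the Nakayama automorphism $\mu_A$ and the twisted superpotential relation, which say how $\omega$ transforms under the arrows.

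\textbf{Step 2: $\phi$ is a well-defined graded automorphism of $A$ compatible with $\omega$.} We check that $\phi$ respects the quiver. The arrow $a_i:e_i\to e_{i+1}$ goes to $a_{\sigma(i-1)}$, which requires $\sigma(i)=\sigma(i-1)+1$; this holds since $\sigma$ is a rotation of the cycle. For $x_i$ we need $\phi$ to send the arrow to an arrow between the correct vertices, which we verify in the same way. Next we compute $\phi(\omega)$:
\[ \phi(x_ia_i - q_i a_{\sigma(i)}x_{i+1}) = p_i\inv p_i\, x_{\sigma(i-1)}a_{\sigma(i-1)} - q_i\, p_{\sigma(i)}p_{i+2}^{-1}\cdots, \]
where the indices of the second term must be tracked carefully. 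The hypotheses $q_i=q_{\sigma(i-1)}$ and $p_{\sigma(i)}=p_{i+1}$ are exactly what make the scalars cancel, so that $\phi(\omega)=\lambda\omega$ for a unit $\lambda$ (we expect $\lambda=1$). This gives $\phi(\omega)\in\kk^\times\omega$, so $\phi$ preserves the ideal $(\omega)$ and descends to $A$.

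\textbf{Step 3: the normal extension hypotheses.} The relations $\omega a=\phi(a)\omega$ make $\omega$ normal in $\cD$, and $\cD/(\omega)\iso A$. The main obstacle is showing that $\omega$ is \emph{regular} in $\cD$, or, in the form of the lifting theorem, that the twisted potential of $\cD$ induces the correct length-three complex. We would try to show that the Hilbert series of $\cD$ equals $h_A(t)/(1-t^2)$ componentwise in the matrix-valued sense, via a Gröbner-basis or diamond-lemma argument on the cubic relations, or by exhibiting $\cD$ as an iterated Ore extension. Alternatively, we would use Step 1's Nakayama data to verify the compatibility condition of Section \ref{sec.lift} relating $\phi$ and $\mu_A$, namely that $\phi$ commutes with $\mu_A$ up to the appropriate scalar. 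The condition $n\ge 3$ should enter here, ruling out the degenerate cases where the arrows $x_i$ and $a_j$ share endpoints and create extra overlaps. Once regularity is established, the lifting theorem yields that $\cD(\fQ,\omega,\phi)$ is twisted Calabi--Yau of dimension three.
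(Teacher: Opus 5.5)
\textbf{The gap is Step 3, which is where the whole content of the theorem lies.} Your Steps 1 and 2 are sound. Once the index in the second term is corrected to $p_{i+1}\inv$, the hypotheses $q_i=q_{\sigma(i-1)}$ and $p_{\sigma(i)}=p_{i+1}$ give exactly $\phi(\omega)=\omega$. But routing the argument through Theorem \ref{thm.lift} (or Proposition \ref{prop.lift}) makes everything hinge on $\omega$ being regular in $\cD(\fQ,\omega,\phi)$, and you only list strategies for this without carrying any out.

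\begin{itemize}
\item The ``compatibility condition of Section \ref{sec.lift}'' is not a sufficient criterion. Lemma \ref{lem.phi_restr} is a necessary condition, derived under the assumption that $\cD$ is already twisted Calabi--Yau. Likewise $\phi(\omega)=\omega$ is a consequence of regularity (since $\omega\cdot\omega=\phi(\omega)\omega$ in $\cD$), not a proof of it. Example \ref{ex.downup} shows that normality of $\omega$ together with $\cD/(\omega)\iso A$ does not by itself give regularity.
\item The Hilbert-series route would require a Gr\"obner basis for $\cD$ for an arbitrary rotation $\sigma$ and arbitrary $p_i,q_i$, which you do not supply. Also, the matrix-valued identity to aim for is $h_{\cD}(t)=h_A(t)(I-P_\omega\inv t^2)\inv$ from Lemma \ref{lem.hilb}, not entrywise division by $1-t^2$, because $\omega$ moves vertex $i$ to $\sigma(i-1)$.
\item Your guess about where $n\geq 3$ enters is unfounded. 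The paper's argument never uses it, and Proposition \ref{prop.ore} applies the theorem with $n\geq 2$.
\end{itemize}

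\textbf{The missing idea} is the one you mention only in passing: realize $\cD$ as an iterated Ore extension in which $\omega$ is the \emph{first} variable adjoined.
\begin{itemize}
\item Set $R=\kk Q[z;\phi]$ with $\phi(e_i)=e_{\sigma(i-1)}$ and $\phi(a_i)=p_ia_{\sigma(i-1)}$. Writing $z_i=ze_i$, the relations are $z_ia_i=p_ia_{\sigma(i-1)}z_{i+1}$.
\item Extend $\sigma$ to $R$ by $\sigma(z_i)=p_iz_{\sigma(i)}$. This sends each relation of $R$ to a multiple of another relation precisely because $q_i=q_{\sigma(i-1)}$ and $p_{\sigma(i)}=p_{i+1}$; this is where the hypotheses really do their work.
\item Then $\delta(e_i)=0$, $\delta(a_i)=z_{i+1}$, $\delta(z_i)=0$ defines a $\sigma$-derivation of $R$. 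The algebra $S=R[x;\sigma,\delta]$ has relations $x_ia_i-q_ia_{\sigma(i)}x_{i+1}=z_{i+1}$ and $x_{\sigma(i-1)}z_i=p_iz_{\sigma(i)}x_i$.
\item Using the first family to eliminate $z$ identifies $z$ with $\omega$. The remaining relations become $\omega a=\phi(a)\omega$ for $a\in\fQ_1$, so $S\iso\cD(\fQ,\omega,\phi)$.
\item Since $\kk Q$ is Calabi--Yau of dimension one, two applications of the Ore extension theorem of \cite{LWW2} make $S$ twisted Calabi--Yau of dimension three directly. No appeal to the lifting theorem is needed. Regularity of $\omega=z$ then comes for free from Lemma \ref{lem.regular}.
\end{itemize}
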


Theorem \ref{thmintro.ore} is then applied to each of the types (A, B, and C). See Proposition \ref{prop.ore}.

Finally, in Section \ref{sec.dim4}, we consider examples of taking normal extensions of twisted graded Calabi--Yau algebras of dimension three to obtain one of dimension four. There are three ``types" of Artin--Schelter regular algebras corresponding to resolutions of the trivial module and the form of the Hilbert series. We show that one can obtain analogues of these for quivers on many vertices through normal extensions. In particular, we describe a family of twisted graded Calabi--Yau algebras of dimension four that arise as normal extensions of a certain quiver down-up algebra.

\subsection*{Acknowledgements}
Gaddis is partially supported by an AMS–Simons Research Enhancement Grant for PUI Faculty.

\section{Background}
\label{sec.background}

Here we review important concepts on path algebras and the twisted Calabi--Yau condition.

\subsection{Graded algebras and Hilbert series}

An algebra $A$ is \emph{$\NN$-graded} if there is a vector space decomposition $A=\bigoplus_{k \in \NN} A_k$ such that $A_k \cdot A_\ell \subset A_{k+\ell}$. Throughout, \emph{graded} will mean $\NN$-graded. A graded algebra $A$ is \emph{locally finite} if $\dim_{\kk} A_k < \infty$ for all $k$. 

\subsection{Quivers and path algebras}

A \emph{quiver} $Q$ is a tuple $(Q_0,Q_1)$ where $Q_0$ is a set of vertices and $Q_1$ a set of edges along with maps $\fs,\ft: Q_1 \to Q_0$. For each $a \in Q_1$, $\fs(a)$ is the \emph{source} and $\ft(a)$ is the \emph{target}. Throughout, we assume that $Q_0$ and $Q_1$ are finite. A $\kk$-linear basis of the \emph{path algebra} $\kk Q$ consists of \emph{paths} of arrows $p=a_0a_1\hdots a_n$ with $\ft(a_i)=\fs(a_{i+1})$ for $i = 0,\hdots,n-1$. The maps $\fs,\ft$ extend naturally to the set of paths with $\fs(p)=\fs(a_0)$ and $\ft(p)=\ft(a_n)$. Given paths $p$ and $q$, multiplication is defined as $p \cdot q = pq$ (concatenation) if $\ft(p)=\fs(q)$ and $p \cdot q = 0$ otherwise. At each vertex $k \in Q_0$ there is a trivial path $e_k$ which has the properties $e_k p = p$ (resp. $pe_k=p$) if $\fs(p)=k$ (resp. $\ft(p)=k$) and $0$ otherwise. There is a filtration on $Q$, which extends to $\kk Q$, by path length. An element $f \in \kk Q$ is \emph{homogeneous} if $f$ is the sum of elements of the same path length. An ideal $I$ in $\kk Q$ is \emph{homogeneous} if it is generated by homogeneous elements.

The \emph{adjacency matrix} $M_Q\in M_n(\NN)$ of a quiver $Q$ with $|Q_0|=n$ where $(M_Q)_{ij}$ denotes the number of arrows with source $i$ and target $j$. Let $I$ be a homogeneous ideal in $\kk Q$ and set $A=\kk Q/I$. For each $k \in \NN$, let $H_k \in M_n(\NN)$ where $(H_k)_{ij}=\dim(e_iAe_j)_k$. If $I \subset (\kk Q)^2$, then $H_1 = M_Q$. The \emph{matrix-valued Hilbert series} of $A$ is defined as 
\[ h_A(t) = \sum_{k=0}^\infty H_k t^k.\]
The \emph{total Hilbert series} of $A$, $h_A^{\tot}(t)$, is the infinite series in which the coefficient of $t^k$ is the sum of the entries of $H_k$.

\subsection{Twisted CY algebras}

The \emph{enveloping algebra} of an algebra $A$ is 
defined as $A^e = A \tensor_\kk A^{\text{op}}$. 

\begin{definition}\label{defn.twCY}
An algebra $A$ is \emph{homologically smooth} if $A$ has a finite length resolution by finitely generated projective $A^e$-modules. If, further, there exists an invertible $(A,A)$-bimodule $U$ such that there are right $A^e$-module isomorphisms $\Ext_{A^e}^i(A,A^e) \iso \delta_{id} U$, then $A$ is \emph{twisted Calabi--Yau} (of dimension $d$). If $U = {}^1A^\mu$ for an automorphism $\mu$ of $A$, then $\mu$ is called the \emph{Nakayama automorphism} of $A$
\end{definition}

We use $\mu^A$ to denote the Nakayama automorphism of the algebra $A$ and we will omit the superscript when the algebra is clear. The Nakayama is unique up to composition with an inner automorphism \cite[Lemma 1.7(d)]{RRZ}.

One can also make appropriate adjustments to the above definition to obtain the definition of \emph{graded} twisted Calabi--Yau algebras. By \cite[Theorem 4.2]{RR2}, a graded algebra $A$ is graded twisted Calabi--Yau if and only if it is twisted Calabi--Yau. In this setting, the Nakayama automorphism $\mu^A$ is necessarily graded, and we use the notation $\mu_0^A$ to denote the restriction of $\mu^A$ to $A_0$.

A connected graded algebra $A$ is twisted Calabi--Yau if and only if it is Artin--Schelter regular \cite[Lemma 1.2]{RRZ}. Note that in the reference and others, ``twisted Calabi--Yau" is called ``skew Calabi--Yau."

For a (graded) algebra $A$, let $J(A)$ denote its (graded) Jacobson radical. If $A$ is locally finite and graded, then the irrelevant ideal $A_{\geq 1}$ is contained in $J(A)$. Let $S=A/J(A)=A_0/J(A_0)$. Then $S$ is a finite-dimensional semisimple algebra.

\begin{definition}\label{defn.genAS}
Let $A$ be a locally finite graded $\kk$-algebra, with $S=A/J(A)$. Then $A$ is \emph{generalized Artin--Schelter (AS) regular of dimension $d$} if $A$ has graded global dimension $d$ and there is a $\kk$-central invertible $(S,S)$-bimodule $V$ such that $\Ext^i(S,A) = \delta_{id} V$ as $(S,S)$-bimodules.
\end{definition}

When $A$ is a locally finite graded algebra and $S$ is separable, then twisted Calabi--Yau and generalized Artin--Schelter regular are equivalent \cite[Theorem 1.5]{RR2}. Moreover, in this setting, $\gldim_l(A) = \pdim(_A S) = \pdim(S_A) = \gldim_r(A)$ \cite[Proposition 3.18]{RR2}. This is the setting we work in and so we simply denote all of these values by $\dim(A)$.

\subsection{Twisted graded Calabi--Yau algebras of dimension two}

The discussion below is a restricted version of a construction from \cite{RR1}. In particular, we only consider the case where all arrows are weighted in degree 1, whereas the authors in \cite{RR1} consider the case that arrows may be given arbitrary weights.

Let $Q$ be a (finite) quiver with $|Q_0|=n$. As above, we weight all arrows in degree 1 so that $\kk Q$ is a graded algebra (generated in degree 1). Set $V=\kk Q_1$ and let $\tau:V \to V$  be a bijective $\kk$-linear map. That is, $\tau \in GL(V)$. Suppose that $\tau$ induces a permutation $\pi$ of $Q_0$ such that $\tau(e_i V e_j) \subset e_{\pi\inv(j)} V e_i$ for all $i,j,d$. Set $\omega= \sum_{x \in Q_1} \tau(x)x$. Then
\[ \cA(Q,\tau) = \kk Q/(\omega) = \kk Q/(\omega_1,\hdots,\omega_n)\]
where $\omega_i = \omega e_i = e_{\pi\inv(i)} \omega e_i$. Let $M$ denote the adjacency matrix of $Q$, and $P$ the permutation matrix induced by $\pi=\mu_0^\cA$ (so $P_{ij} = \delta_{\pi(i)j}$). By \cite[Lemma 7.6]{RR1}, $\cA=\cA(Q,\tau)$ is twisted Calabi--Yau if and only if $h_{\cA}(t) = (p(t))\inv$ where $p(t)=I-Mt+Pt^2$. 

If we assume that $\GKdim(\cA) < \infty$, then by \cite[Theorem 7.8]{RR1}, the spectral radius of $M$ satisfies 
\[ \rho(M)= \max\{ |\lambda| \mid \text{$\lambda$ is an eigenvalue of $M$}\} = 2.\] 
It follows further that $\GKdim(\cA)=2$ and $\cA$ is noetherian \cite[Theorem 6.6]{RR2}. We now give several examples of twisted graded Calabi--Yau algebras.

\begin{definition}\label{defn.typeA}
Let $n \in \ZZ_+$ and let $\dblQ$ be as in \eqref{eq.dblA}. For $\bq \in (\kk^\times)^n$, define
\[ A_n(\bq) := \kk\dblQ/ (d_iu_i - q_i u_{i+1}d_{i+1} \quad\text{for $i\in\dblQ_0$}).\]
In case there is some $q \in \kk^\times$ such that $q_i=q$ for all $i$, we denote $A_n(\bq)$ by $A_n(q)$.
\end{definition}

The algebras $A_n(\bq)$ are twisted Calabi--Yau by \cite[Lemma 2.2]{GZiso}. They are Calabi--Yau precisely when $q_i=1$ for all $i$, in which case $A_n(1)$ is (isomorphic to) a preprojective algebra of type $A$. Let $M$ be the adjacency matrix of $\dblQ$. Then
\begin{align}\label{eq.Ahilb} 
    h_{A_n(\bq)} = (I-Mt+It^2)\inv
    \quad\text{and}\quad
    h_{A_n(\bq)}^{\tot} = n(1-t)^{-2}.
\end{align}

\begin{definition}\label{defn.typeB}
Let $n \geq 2$ and let $\tldQ$ be as in \eqref{eq.Bn}. For $\bq \in (\kk^\times)^n$,
define 
\[ B_n(\bq) = \kk\tldQ/(b_ia_i - q_i a_i b_{i+1} \quad 
    \text{for $i\in\tldQ_0$}).\]
In case there is some $q \in \kk^\times$ such that $q_i=q$ for all $i$, we denote $B_n(\bq)$ by $B_n(q)$.
\end{definition}

Let $B=B_n(\bq)$. Then $B$ is twisted Calabi--Yau of dimension two and the Nakayama automorphism satisfies $\mu_0^B(e_i)=e_{i+1}$ \cite[Lemma 2.3]{GZiso}. The permutation matrix $P^B$ corresponding to $\mu_0^B$ satisfies 
\begin{align}\label{eq.PtypeB}
P_{ij}^B = \begin{cases} 
    1 & i=j-1 \\
    0 & \text{otherwise}.
\end{cases}
\end{align}
The adjacency matrix of $\tldQ$ satisfies $M=I+P$ and so
\[ 
    h_{B_n(\bq)} 
        = (I-Mt+P^Bt^2)\inv 
        = \sum_{k=0}^\infty H_k t^k \quad\text{where}\quad
    H_k = \sum_{i=0}^k (P^B)^i,
\] 
and $h_{B_n(\bq)}^{\tot}=n(1-t)^{-2}$.

\begin{definition}\label{defn.typeC}
Let $n \geq 2$ and let $\hatQ$ be as in \eqref{eq.typeC}. For $\bq \in (\kk^\times)^n$, define 
\[ C_n(\bq) = \kk\hatQ/(b_ia_{i+1}-q_ia_ib_{i+1} \quad \text{for $i\in\hatQ_0$}).\]
In case there is some $q \in \kk^\times$ such that $q_i=q$ for all $i$, we denote $C_n(\bq)$ by $C_n(q)$.
\end{definition}

Set
\begin{align}\label{eq.PtypeC}
P_{ij}^C = \begin{cases} 
    1 & i=j-2 \\
    0 & \text{otherwise}.
\end{cases}
\end{align}
The following should be compared to \cite[Lemmas 2.2, 2.3]{GZiso}.

\begin{lemma}\label{lem.typeC}
Fix $n \geq 2$, let $\hatQ$ be the quiver from \eqref{eq.typeC}, and let $M$ be the adjacency matrix of $\hatQ$. For $\bq \in (\kk^\times)^n$, the algebras $C_n(\bq)$ are twisted Calabi--Yau of dimension two and the Nakayama automorphism $\mu^C$ satisfies $\mu_0^C(e_i) = e_{i+2}$.
\end{lemma}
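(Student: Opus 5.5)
The plan is to realize $C_n(\bq)$ as $\cA(\hatQ,\tau)$ for a suitable $\tau\in GL(\kk\hatQ_1)$ and then apply \cite[Lemma 7.6]{RR1}. That lemma says it suffices to check that $h_{C_n(\bq)}(t)=(I-Mt+P^Ct^2)^{-1}$, where $P^C$ is the permutation matrix \eqref{eq.PtypeC}.

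\textbf{Step 1: the superpotential-type element.} Define $\tau$ on arrows by $\tau(a_{i+1}) = b_i$ and $\tau(b_{i+1}) = -q_i a_i$, with indices mod $n$. Then
\[ \omega=\sum_{x\in\hatQ_1}\tau(x)x=\sum_i \bigl(b_ia_{i+1}-q_ia_ib_{i+1}\bigr). \]
Since both $a_{i+1}$ and $b_{i+1}$ run from $e_{i+1}$ to $e_{i+2}$, we have $\omega e_{i+2}=b_ia_{i+1}-q_ia_ib_{i+1}$, which is exactly the defining relation at vertex $e_{i+2}$. The map $\tau$ sends $e_{i+1}Ve_{i+2}$ into $e_iVe_{i+1}$. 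So the compatibility condition $\tau(e_jVe_k)\subset e_{\pi^{-1}(k)}Ve_j$ holds with $\pi^{-1}(k)=k-2$, i.e.\ $\pi(i)=i+2$. Therefore $C_n(\bq)=\cA(\hatQ,\tau)$, and the associated permutation matrix is $P_{ij}=\delta_{\pi(i)j}=\delta_{j,i+2}$, which is $P^C$. Granting the twisted Calabi--Yau property, this also gives $\mu_0^C(e_i)=e_{i+2}$, since $\pi=\mu_0^{\cA}$ in this construction.

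\textbf{Step 2: the Hilbert series (main obstacle).} The adjacency matrix is $M=2P'$, where $P'_{ij}=\delta_{j,i+1}$ is the cyclic shift, and $P^C=(P')^2$. Hence
\[ I-Mt+P^Ct^2=(I-P't)^2, \qquad (I-P't)^{-2}=\sum_{k\ge0}(k+1)(P')^kt^k. \]
We must therefore show that $\dim e_iC_ke_{i+k}=k+1$ and that all other entries of $H_k$ vanish. The vanishing is automatic, because every path of length $k$ starting at $e_i$ ends at $e_{i+k}$.

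For the dimension count, note that paths of length $k$ from $e_i$ correspond to words in $\{a,b\}$ of length $k$. The relation $b_ia_{i+1}=q_ia_ib_{i+1}$ lets us rewrite any $ba$ as a nonzero scalar times $ab$. This is a rewriting system with leading term $ba$, ordered with $b>a$ in deglex. The only overlaps would come from a word $ba$ overlapping with another $ba$, and no such overlap exists, so the Diamond Lemma applies with no ambiguities to resolve. The normal words are $a^jb^{k-j}$, giving $k+1$ of them, which is the required count.

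Alternatively, one can observe that $C_n(\bq)$ is a graded twist or skew-group-type analogue of a quantum plane, for which the count is standard.

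\textbf{Step 3: conclude.} With $h_{C_n(\bq)}(t)=(I-Mt+P^Ct^2)^{-1}$ established, \cite[Lemma 7.6]{RR1} gives that $C_n(\bq)$ is twisted Calabi--Yau of dimension two with $\mu_0^C(e_i)=e_{i+2}$.

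In this argument the only steps needing care are verifying the absence of overlap ambiguities in Step 2 and keeping the index conventions for $\pi$ versus $\pi^{-1}$ consistent with \cite{RR1}.
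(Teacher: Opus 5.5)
Your proposal is correct and follows essentially the same route as the paper: the same $\tau$ (with $\tau(a_{i+1})=b_i$, $\tau(b_{i+1})=-q_ia_i$) realizes $C_n(\bq)\iso\cA(\hatQ,\tau)$ with $\pi(i)=i+2$, the same basis of paths $a\cdots a\,b\cdots b$ gives $H_k=(k+1)(P')^k$, and then \cite[Lemma 7.6]{RR1} is applied. The only cosmetic differences are these. You factor $I-Mt+P^Ct^2=(I-P't)^2$, where the paper checks the recursion $H_{k+1}=MH_k-P^CH_{k-1}$ inductively. You also give the Diamond Lemma argument for the basis directly, where the paper defers it to Lemma~\ref{lem.Cbasis}.
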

\begin{proof}
Set $C=C_n(\bq)$. Define $\tau$ by $\tau(a_i) = b_{i-1}$ and $\tau(b_i) = -q_{i-1} a_{i-1}$ extended linearly so that $\tau:V \to \kk\hatQ$ is an injective $\kk$-linear graded map. It is clear that $W=\tau(V)$ is again an arrow space for $\kk\hatQ$. So, $C_n(\bq) \iso \cA(\hatQ,\tau)$. Let $\omega = \sum_{x \in \hatQ_1} \tau(x)x$ so that 
\[ \omega_{i+2} = e_i \omega e_{i+2} = \tau(a_{i+1})a_{i+1} + \tau(b_{i+1})b_{i+1}
	= b_ia_{i+1} - q_ia_ib_{i+1}.\]
In this case, $\mu(e_i)=e_{i+2}$ and so the permutation matrix corresponding to $\mu_0$ is $P^C$ defined in \eqref{eq.PtypeC}. Set $N$ to be the $n \times n$ matrix with $N_{j-1,j}=1$ and $N_{ij}=0$ for $i \neq j-1$ so that $N^2=P^C$. Note that the adjacency matrix for $\hatQ$ is $2N$.

It is straightforward (see Lemma~\ref{lem.Cbasis}) that a $\kk$-basis for $C$ consists of paths of the form
\[ a_ia_{i+1}\cdots a_{i+k-1}b_{i+k}b_{i+k+1} \cdots b_{i+k+\ell-1},\]
for $k,\ell \in \NN$ and $i \in \hatQ_0$. That is, there is a bijection between basis paths and monomials in $\kk[x,y]$. Therefore, the matrix-valued Hilbert series for $C$ satisfies $h_C(t) = \sum H_k t^k$, where $H_k = (k+1)N^k$, and $h_C^{\tot}=n(1-t)^2$.

By \cite[Lemma 7.6]{RR1}, $C=\cA(\hatQ,\tau)$ is twisted Calabi--Yau of dimension two if and only if $h_C(t) = (p(t))\inv$ where $p(t)=I-Mt+Pt^2$. That is, we must show that the the coefficients of $h_C(t)$ satisfy the recursive formula $H_0=I$, $H_1=M$, and $H_{k+1} = MH_k - P^CH_{k-1}$. Suppose this holds for some $k$. Then
\[ MH_k - P^CH_{k-1} = (2N)( (k+1) N^k) - N^2 ( kN^{k-1}) = (k+2)N^{k+1} = H_{k+1},\]
as claimed.
\end{proof}

The algebra $A_2(q_0,q_1)$ is isomorphic to $C_2(q_1,q_0)$ via the map $e_i \mapsto e_i$, $a_i \mapsto u_i$, and $b_i \mapsto d_{i-1}$ for $i=0,1$.

\subsection{Twisted graded Calabi--Yau algebras of dimension three}

To describe the case of dimension three, we first review some material on superpotentials.

Let $Q$ be a quiver and let $\eta$ be an automorphism of $\kk Q$. An \emph{$\eta$-twisted superpotential} of $\kk Q$ is a linear combination of paths of length $d$ which is invariant under the linear map sending the path $\alpha_1 \alpha_2 \cdots \alpha_d$ to $(-1)^{d+1} \eta(\alpha_d)\alpha_1 \cdots \alpha_{d-1}$. For each $\alpha \in Q_0$, there is a (linear) derivation map $\delta_\alpha$ sending the path $\alpha_1 \alpha_2 \cdots \alpha_d$ to $\alpha_2 \cdots \alpha_d$ if $\alpha=\alpha_1$ and $0$ otherwise. Given an $\eta$-twisted superpotential $\omega$, the corresponding \emph{derivation-quotient algebra (of degree one)} is $\kk Q/I$ where $I = (\partial_a \omega : a \in Q_1)$. Hence, $I$ is generated by relations of the same degree.

In the name of efficiency, we will often use shorthand to write superpotentials. Let $p=\alpha_1 \alpha_2 \cdots \alpha_d$ be a summand of a superpotential $\omega$. Let $S$ denote the set of signed $\eta$-twists of $p$. Then the \emph{compact form} of $p$ is $[p]=\sum_{s \in S} s$.

Let $A=\kk Q/I$ be graded twisted Calabi--Yau of dimension three. By \cite[Theorem 6.8, Remark 6.9]{BSW}, $A$ is a derivation-quotient algebra for some $\eta$-twisted superpotential $\omega$. Moreover, the Nakayama automorphism of $A$ is $\mu^A = (-1)^{d+1}\eta\inv$ \cite[Theorem 6.8]{BSW}. Note that our convention on composing paths is opposite from that reference, hence the inverse. 

Let $M$ be the adjacency matrix for $Q$ and $P$ the permutation matrix corresponding to $\mu_0^A$. Let $d$ denote the degree of $\omega$. We call $(M,P,d)$ the \emph{type} of $A$. We also say that $M$ \emph{supports} a twisted Calabi--Yau algebra.

\begin{example}[{\cite[Lemma 3.2 and Theorem 3.6]{GK1}}]
Let $\dblQ$ be as in \eqref{eq.dblA}. Suppose $\beta_i \neq 0$ for all $i\in \dblQ_0$. Define $\eta \in \Aut_{\gr}(\kk\dblQ)$ by $\eta(u_i)=\beta_{i-1} u_i$ and $\eta(d_i)=\beta_{i-1} d_i$ for all $i\in Q_0$. Then the algebra $\cH = \cH_n(\alpha,\beta)$ is the derivation-quotient algebra on $\kk Q$ with $\eta$-twisted superpotential
\[ \Omega = \sum_{i\in Q_0} [d_i d_{i-1} u_{i-1} u_i] - \alpha_i [d_i u_i d_i u_i].\]
The Nakayama automorphism $\mu^{\cH}$ is then given by $\mu^{\cH}(u_i)=-\beta_{i-1}\inv u_i$ and $\mu^{\cH}(d_i)=-\beta_{i-1}\inv d_i$.
\end{example}

We will also assume that $\GKdim(A)=3$, though in some cases it is sufficient only to assume that $\GKdim(A) < \infty$. By \cite{RR1}, $M$ and $P$ commute and $h_A(t) = (p(t))\inv$ where
\begin{align}\label{eq.dim3_hilb}
    p(t) = I - Mt + PM^T t^{d-1} - Pt^d.
\end{align}
Every root of $\det(p(t))$ is a root of unity and $\det(p(t))$ vanishes at $t=1$ to order at least 3. It is conjectured that $d=3$ or $d=4$, and it is known that this holds when $M$ is normal ($MM^T=M^TM$). It is an open question, in general, when a matrix $M$ supports a graded twisted Calabi--Yau algebra $A=\kk Q/I$ of dimension three and finite GK dimension. Quivers with one vertex are Artin--Schelter regular and these are classified \cite{ATV1,ASch}. Quivers with two or three vertices were determined in \cite{GR1}, and a partial classification on four vertices is given in \cite{GLNW}.

Let $A$ be a locally finite graded algebra. The \emph{graded left socle} of $A$ is $\soc_l(A) = \{ x \in A \mid J(A)x=0\}$ where $J(A)$ is the graded Jacobson radical. Similarly, $\soc_r(A) = \{ x \in A \mid xJ(A)=0\}$ is the \emph{graded right socle} of $A$. If $A$ is a derivation-quotient algebra for some $\eta$-twisted superpotential $\omega$, then $A$ is twisted graded Calabi--Yau of dimension three if and only if $h_A(t) = (p(t))\inv$ as in \eqref{eq.dim3_hilb} and $A$ has trivial graded left and right socle \cite[Lemma 8.6]{RR1}.

\section{Lifting the twisted Calabi--Yau property}
\label{sec.lift}

Let $\cD$ be a graded algebra with a regular homogeneous normal element $\omega$. Then $\omega$ induces an automorphism $\phi_\omega$ such that $\omega x = \phi_\omega(x)\omega$ for all $x \in \cD$. Let $A=\cD/\omega \cD$. In this section, we are interested in how the properties of $A$ transfer to $\cD$, in particular the twisted Calabi--Yau property.

\begin{theorem}[{\cite[Theorem 3.1]{ZSL}}]\label{thm.lift}
Keep the setup as above. If $A$ is twisted graded Calabi--Yau, then so is $\cD$. Moreover, $\mu^A = \mu^{\cD} \circ \phi_\omega$.
\end{theorem}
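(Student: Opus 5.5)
We prove Theorem \ref{thm.lift} by comparing bimodule resolutions, reducing to the Ext computation for a normal regular element. Set $\sigma=\phi_\omega$ and let $\ell=\deg\omega$. Regularity and normality of $\omega$ give an exact sequence of graded $(\cD,\cD)$-bimodules
\[ 0 \to {}^{\sigma}\cD(-\ell) \xrightarrow{\;\cdot\,\omega\;} \cD \to A \to 0, \]
where right multiplication by $\omega$ is a bimodule map from the $\sigma$-twisted bimodule because $x\omega y' = \omega\,\sigma^{-1}(x) y'$ up to the relabelling of the twist. First we establish homological smoothness of $\cD$. Since $A$ is homologically smooth over $A^e$, there is a finite resolution $P_\bullet\to A$ by finitely generated projective $A^e$-modules. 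We lift it to $\cD^e$. The algebra $A^e=\cD^e/(\omega\tensor 1,1\tensor\omega)$ is obtained by killing two commuting normal regular elements, so each of $\omega\tensor1$ and $1\tensor\omega$ has projective dimension one over $\cD^e$. Standard change-of-rings (graded Nakayama plus a Koszul-type argument on $\omega\tensor 1, 1\tensor\omega$) then shows that $\cD$ has a finite resolution of length at most $\dim A+1$ by finitely generated graded projective $\cD^e$-modules. An alternative is to use the equivalence with generalized AS regularity (\cite[Theorem 1.5]{RR2}), since $S=\cD/J(\cD)=A/J(A)$ is separable: work one-sidedly with $\pdim S_\cD$ and the Rees lemma.

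For the Ext computation, we would choose the one-sided route because it is cleaner. Suppose $A$ is generalized AS regular of dimension $d$. By the Rees lemma for a normal regular element, for any graded right $A$-module $N$ we have
\[ \Ext^{i+1}_{\cD}(N,\cD) \iso \Ext^i_A(N, \cD/\omega\cD)=\Ext^i_A(N,A) \]
up to a degree shift and a twist by $\sigma$. Taking $N=S$ gives $\Ext^i_\cD(S,\cD)=\delta_{i,d+1}V'$, where $V'$ is $V$ twisted by $\sigma$ (and shifted). This $V'$ is still an invertible $\kk$-central $(S,S)$-bimodule. The global dimension of $\cD$ is $d+1$ by the change-of-rings inequality $\pdim_\cD N\le \pdim_A N+1$, together with the nonvanishing in degree $d+1$. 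Hence $\cD$ is generalized AS regular, and therefore twisted Calabi--Yau of dimension $d+1$.

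For the Nakayama formula we track bimodule structures. Apply $\Ext_{\cD^e}(-,\cD^e)$ to the short exact sequence, or equivalently use the bimodule version of Rees: $\Ext^{d+1}_{\cD^e}(A,\cD^e)\iso \Ext^{d}_{A^e}(A,A^e)$ twisted by $\sigma$ on one side. Computing $\Ext^{d+1}_{\cD^e}(A,\cD^e)$ a second way, from the long exact sequence and $\Ext^{d+1}_{\cD^e}(\cD,\cD^e)={}^1\cD^{\mu^\cD}$, gives ${}^1A^{\mu^\cD\sigma}$, since the connecting map comes from multiplication by $\omega$. Comparing with ${}^1A^{\mu^A}$ yields $\mu^A=\mu^\cD\circ\phi_\omega$ modulo inner automorphisms. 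We expect the main obstacle to be this bookkeeping of twists and sides, specifically getting $\sigma$ rather than $\sigma^{-1}$ in the correct position. The conventions must be fixed carefully, for instance by checking them on a polynomial extension $\cD=A[\omega;\sigma]$ where everything is explicit.
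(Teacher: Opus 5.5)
\paragraph{Comparison with the paper.} Your proof that $\cD$ is twisted Calabi--Yau is the paper's own argument. Proposition~\ref{prop.lift} takes exactly your one-sided route: Lemma~\ref{lem.rees} with $N=S$ and $M=\cD$, change of rings to get $\pdim({}_{\cD}S)=d+1$, and the equivalence with generalized AS regularity from \cite{RR2}. So your tentative lift of the $A^e$-resolution can be dropped. The paper does not prove $\mu^A=\mu^{\cD}\circ\phi_\omega$; it imports it from \cite{ZSL}. Your bimodule argument for this relation has a genuine error: the homological degree is off by one, so both of your identifications are false as stated.

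\paragraph{The gap.} As you note yourself, $A^e=\cD^e/(\omega\tensor 1,1\tensor\omega)$ is a quotient by a regular normal sequence of length \emph{two}. So the bimodule Rees lemma must be applied twice, giving $\Ext^{p+2}_{\cD^e}(A,\cD^e)\iso\Ext^{p}_{A^e}(A,A^e)$ up to twists. In particular $\Ext^{d+1}_{\cD^e}(A,\cD^e)\iso\Ext^{d-1}_{A^e}(A,A^e)=0$. Already for $\cD=\kk[x]$, $\omega=x$, $A=\kk$, $d=0$, one has $\cD^e\iso\kk[x,y]$ and $\Ext^{1}_{\kk[x,y]}(\kk,\kk[x,y])=0\neq\kk=\Ext^0_{\kk}(\kk,\kk)$.

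The long exact sequence tells the same story. Use $0\to{}^1\cD^{\phi_\omega}(-\ell)\to\cD\to A\to 0$; note the kernel $\omega\cD$ is ${}^1\cD^{\phi_\omega}(-\ell)$ via $y\mapsto y\omega$, or ${}^{\phi_\omega\inv}\cD(-\ell)$ via $y\mapsto\omega y$, not ${}^{\phi_\omega}\cD(-\ell)$. Then $\Ext^{d+1}_{\cD^e}(A,\cD^e)$ is exactly the kernel of the map $\Ext^{d+1}_{\cD^e}(\cD,\cD^e)\to\Ext^{d+1}_{\cD^e}({}^1\cD^{\phi_\omega}(-\ell),\cD^e)$ induced by $\omega$. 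So this map is injective, and the twisted copy of $A$ you want to read off is its cokernel, namely $\Ext^{d+2}_{\cD^e}(A,\cD^e)$. That cokernel is reached by the connecting map; the map induced by $\omega$ is not the connecting map.

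The comparison must therefore be made in degree $d+2$, and there the bookkeeping is no longer a one-sided twist. The two Rees steps, for $\omega\tensor1$ and for $1\tensor\omega$, twist ${}^1A^{\mu^A}$ on \emph{both} sides: by $\phi_\omega$ on one side and $\phi_\omega\inv$ on the other. The long exact sequence produces a single twist of ${}^1A^{\mu^{\cD}}$. Matching them, one twist cancels and the other yields the factor $\phi_\omega$, and the identity holds only modulo inner automorphisms. As written, your proposal does not establish the second assertion of the theorem.
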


In case $\cD$ (and hence $A$) is connected, this follows from \cite[Proposition 2.1]{CKS}. We give an alternate proof of lifting in the setting of generalized AS regular algebras, which follows \cite{CKS} more closely. In either case, one must take care to establish regularity of the element $\omega$.

First we note the following preliminary result. The proof is given in \cite[Lemma 2.2]{CKS}. Though connectedness is stated as a hypothesis, it is not needed in the proof.

\begin{lemma}\label{lem.rees}
Let $\cD$ be an $\NN$-graded algebra with $\cD/J(\cD)$ separable, let $\omega \in \cD_m$ be a normal element of degree $m$, and let $A = \cD/\omega\cD$. For every left module $N$ and all left $\cD$-modules $M$ on which $\omega$ acts regularly, there are isomorphisms
\[  \Ext_{\cD}^{p+1}(N,M) \iso \Ext_A^p(N,{}_\tau(M/\omega M)) \]
which is functorial on both $N$ and $M$ for all $p$.
\end{lemma}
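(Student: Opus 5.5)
We follow the Rees-type argument of \cite[Lemma 2.2]{CKS}, which never uses connectedness. Here $\tau$ is the twist coming from normality of $\omega$: for a left $\cD$-module $L$, ${}_\tau L$ is $L$ with its action twisted by $\phi_\omega$ (or its inverse, depending on conventions), together with the grading shift by $m$ in the graded setting. Write $M' = {}_\tau(M/\omega M)$.

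Since $\omega$ is normal, $\omega\cD = \cD\omega$ is a two-sided ideal. We compare the two sides via the change-of-rings spectral sequence
\[
E_2^{p,q} = \Ext_A^p\bigl(N, \Ext_{\cD}^q(A, M)\bigr) \Longrightarrow \Ext_{\cD}^{p+q}(N, M).
\]
This is valid because $N$ is an $A$-module, so that $\Hom_{\cD}(N,-) = \Hom_A(N, \Hom_{\cD}(A,-))$ and $\Hom_{\cD}(A,-)$ sends injectives to injectives. We also need $N$ to be an $A$-module; this is implicit in the statement, and we will assume $N$ is a left $A$-module.

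The first task is to compute $\Ext_{\cD}^q(A,M)$. The element $\omega$ is regular in $\cD$: this is part of the setup of the section, and we will add it as a hypothesis if necessary. Then
\[
0 \to \cD \xrightarrow{\;\cdot\omega\;} \cD \to A \to 0
\]
is a projective resolution of ${}_{\cD}A$, where right multiplication by $\omega$ is a left $\cD$-module map of degree $m$. Applying $\Hom_{\cD}(-,M)$ gives the complex
\[
M \xrightarrow{\;\omega\cdot\;} M ,
\]
where $f \mapsto (x\mapsto f(x\omega))$ becomes left multiplication by $\omega$ on $M$. Since $\omega$ acts regularly on $M$, this map is injective. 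Hence $\Ext^0_{\cD}(A,M) = 0$ and $\Ext^1_{\cD}(A,M) = M/\omega M$.

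Next we check the $A$-module structure on $\Ext^1_{\cD}(A,M)$. The left $A$-action comes from the right action of $A$ on $A$. Lifting right multiplication by $a$ to the resolution $\cD\omega \subset \cD$ forces the twist: $x\omega \cdot a = x\,\phi_\omega(a)\,\omega$, since $\omega a = \phi_\omega(a)\omega$. Therefore $\Ext^1_{\cD}(A,M) \cong {}_\tau(M/\omega M)$ as graded left $A$-modules, with the shift by $m$ included.

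Since only the row $q=1$ is nonzero, the spectral sequence collapses and gives $\Ext_{\cD}^{p+1}(N,M) \cong \Ext_A^p(N, M')$. Functoriality in $N$ and $M$ follows from naturality of the spectral sequence and of the resolution.

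As a direct alternative, we can take an injective resolution $M \to I^\bullet$ over $\cD$ and use the short exact sequence
\[
0 \to M \xrightarrow{\;\omega\;} M \to M/\omega M \to 0
\]
(twisted) together with $\Hom_{\cD}(N, -)$ and the fact that $\omega$ kills $N$. The spectral sequence is cleaner, though.

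The main obstacle is the careful bookkeeping of the twist $\tau$ and the degree shift, namely identifying exactly which automorphism, $\phi_\omega$ or its inverse, appears. The separability of $\cD/J(\cD)$ plays no essential role in this argument. It matters only for the subsequent application with $N = S$.
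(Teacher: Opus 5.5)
Your argument is correct and is essentially the paper's: the paper gives no proof of its own beyond citing \cite[Lemma 2.2]{CKS} and remarking that connectedness is never used. Your change-of-rings spectral sequence supplies a complete version of that noncommutative Rees lemma: it collapses onto the row $q=1$ because $0\to\cD(-m)\xrightarrow{\cdot\omega}\cD\to A\to 0$ resolves $A$ and $\omega$ is injective on $M$, and you correctly trace the twist to $\omega a=\phi_\omega(a)\omega$. You were right to make regularity of $\omega$ in $\cD$ explicit; it is the section's standing hypothesis but is missing from the lemma as stated, and without it the lemma genuinely fails. For example, take $\cD=\kk[x,y]/(xy)$, $\omega=x$, $M=\cD/(y)$ and $N=\cD/(x)$: then $\Ext^3_{\cD}(N,M)\iso\kk$ while $\Ext^2_A(N,M/\omega M)=0$.
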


\begin{lemma}\label{lem.hilb}
Let $\cD$ be an $\NN$-graded algebra. Let $\omega \in \cD$ be a homogeneous normal element of degree $m > 0$, let $A = \cD/\omega\cD$, and let $P_\omega$ be the permutation matrix corresponding to $\phi_\omega$ (so $(P_\omega)_{ij} = \delta_{\phi_\omega(i)j}$).
\begin{enumerate}
    \item \label{hilb1} The element $\omega$ is regular if and only if the Hilbert series of $A$ and $\cD$ are related by 
    \begin{align*}
        h_{\cD}(t) = h_A(t)(I-P_\omega\inv t^m)\inv \quad\text{or}\quad
        h_{\cD}^{\tot}(t) = h_A^{\tot}(t) (1-t^m)\inv.
    \end{align*}
    \item \label{hilb2} If $\omega$ is regular, then $A$ has finite GK dimension if and only if $D$ does.
    \item \label{hilb3} The algebra $A$ is left or right noetherian if and only if $D$ is.
\end{enumerate}
\end{lemma}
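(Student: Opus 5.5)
The plan is to compare $\cD$ with $A$ through the short exact sequence
\[ 0 \to \omega\cD \to \cD \to A \to 0 \]
of graded vector spaces, refined by the idempotents $e_i$. Throughout I will assume $\cD$ is locally finite, which is the standing setting in which Hilbert series are defined. Normality gives $\omega e_j = \phi_\omega(e_j)\omega$. Hence $\omega = \sum_j e_{\phi_\omega(j)}\,\omega\, e_j$, and left multiplication by $\omega$ sends $e_i\cD_{k-m}e_j$ into $e_{\phi_\omega(i)}(\omega\cD)_k e_j$. This gives a surjection $\cD(-m)\to\omega\cD$ that permutes the vertex-rows by $\phi_\omega$.

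\textbf{Part \eqref{hilb1}.}
\begin{enumerate}
\item If $\omega$ is regular, this surjection is a bijection. So $h_{\omega\cD}(t)$ is $h_\cD(t)t^m$ with its rows permuted by the appropriate permutation matrix, and additivity gives $h_\cD = h_A + (\text{permuted } h_\cD)\,t^m$.
\item To land exactly in the stated form $h_A(t)(I-P_\omega\inv t^m)\inv$, with the matrix on the right, I will instead use right multiplication $x\mapsto x\omega$ onto $\cD\omega=\omega\cD$. This sends $e_i\cD e_j$ into $e_i\cD e_j\omega = e_i\cD\omega e_{\phi_\omega\inv(j)}$, so it permutes columns. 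This yields $h_\cD = h_A + h_\cD P_\omega\inv t^m$, and solving gives the first formula. Summing all entries gives the total version.
\item For the converse, assume either formula. Then for each $k$ the maps $\cD_{k-m}\to(\omega\cD)_k$, $x\mapsto\omega x$ and $x\mapsto x\omega$, are surjections between finite-dimensional spaces of equal total dimension, hence injective. Thus $\omega$ is left and right regular. The total-dimension statement suffices here, so either formula implies regularity.
\end{enumerate}

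\textbf{Part \eqref{hilb2}.} Since $m>0$ and $\omega$ is regular, part \eqref{hilb1} gives
\[ \dim\cD_k=\sum_{j\ge 0}\dim A_{k-jm}. \]
If $\dim A_{\le k}\le Ck^{g}$, then $\dim\cD_{\le k}\le C'k^{g+1}$. Conversely, $\dim A_k\le\dim\cD_k$. So polynomial growth of one is equivalent to polynomial growth of the other; in fact $\GKdim\cD=\GKdim A+1$ for these finitely generated graded algebras. I will phrase this through the standard formula $\GKdim=\limsup\log(\dim_{\le k})/\log k$, using that both algebras are generated in finitely many degrees.

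\textbf{Part \eqref{hilb3}.} Since $A$ is a quotient of $\cD$, noetherianity passes from $\cD$ to $A$. The converse is the main step, and I will follow the standard argument of \cite[Lemma 8.2]{ATV1}-type. It suffices to show every graded right ideal $I$ of $\cD$ is finitely generated, since an $\NN$-graded algebra that is graded noetherian is noetherian.
\begin{enumerate}
\item By graded Nakayama, using $\omega\in\cD_{\ge1}\subset J(\cD)$, it suffices to show $I/I\cdot\cD_{\ge1}$ is finite-dimensional. Equivalently, I will show finite generation directly by induction.
\item Consider the graded right ideals $I_k=\{x\in\cD : \omega^kx\in I\}$. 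Because $\omega$ is normal, these form an ascending chain of graded right ideals.
\item Their images $\overline{I_k}=(I_k+\omega\cD)/\omega\cD$ form an ascending chain in $A$, which stabilizes since $A$ is noetherian.
\item Choose finitely many homogeneous lifts of generators of $\overline{I_0},\dots,\overline{I_N}$. A degree induction, subtracting lifts and dividing by $\omega$, which is legitimate by regularity only in the case it is needed, shows these elements multiplied by powers of $\omega$ generate $I$.
\end{enumerate}
Without regularity, I would instead use the general fact that a graded ring with a normal element of positive degree whose quotient is right noetherian is right noetherian; this is the route I expect to need care with. The left case is symmetric.
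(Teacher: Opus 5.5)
Your parts (1) and (2) follow the paper. Right multiplication by $\omega$ identifies $e_i\cD_{k-m}e_j$ with $e_i(\omega\cD)_k e_{\phi_\omega^{-1}(j)}$, which gives $h_{\cD}=h_A+h_{\cD}P_\omega\inv t^m$, and (2) is read off from (1). Your dimension count for the converse of (1) is a correct and more explicit version of what the paper takes from \cite{CKS}.

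For (3), the paper simply cites \cite[Lemma 8.2]{ATV1}, which needs no regularity; note that (3) is stated without assuming $\omega$ regular. Your closing sentence amounts to the same citation. The argument you actually sketch, however, breaks at step 2. The right ideals $I_k=\{x:\omega^k x\in I\}$ need not be nested, because $I$ is only a right ideal and $\omega I\not\subseteq I$ in general. A counterexample exists in the paper's own setting. Take $\omega$ regular with $\phi_\omega(e_i)\neq e_i$ (as for the type B and C extensions) and $I=e_i\cD$. Then $\omega e_i=e_{\phi_\omega(i)}\omega$ is a nonzero element of $e_{\phi_\omega(i)}\cD$, and this meets $e_i\cD$ trivially, so $e_i\in I_0\setminus I_1$. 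In fact $I_k=e_{\phi_\omega^{-k}(i)}\cD$, so the images in $A$ cycle instead of stabilizing.

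The repair is to multiply by $\omega$ on the other side. Set $L_k=\{x\in\cD : x\omega^k\in I\}$.
\begin{itemize}
\item The chain is ascending because $I$ is a right ideal, and $L_0=I$.
\item Each $L_k$ is a graded right ideal because $\cD\omega^k=\omega^k\cD$.
\item The images of the $L_k$ in $A$ stabilize at some $N$. For $j\le N$, choose homogeneous $y_{j,l}\in L_j$ lifting generators of the image of $L_j$.
\end{itemize}
Now take homogeneous $v\in L_k$ and put $k'=\min(k,N)$. Subtracting a suitable $\sum_l y_{k',l}c_l$ leaves an element of $L_k\cap\cD\omega$. This element equals $v'\omega$ with $v'\in L_{k+1}$ simply by the definition of $L_{k+1}$, so no regularity is used, and $\deg v'=\deg v-m$. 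Since $m>0$, induction on degree gives $v\omega^k\in\sum_{j\le N,\,l} y_{j,l}\omega^j\cD$. Taking $k=0$ shows $I$ is finitely generated.

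If $\omega$ is regular, you could instead keep your $I_k$ and use the chain $\phi_\omega^k(I_k)$, which is ascending. With either change, your sketch becomes a self-contained proof of the cited lemma; the $L_k$ version also covers the non-regular case.
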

\begin{proof}
\eqref{hilb1} The statement on the total Hilbert series follows almost directly from \cite[Proposition 2.1]{CKS}. In particular, 
\[ 0 \to \cD(-m) \xrightarrow[]{\cdot \omega} \cD \to A \to 0\]
is exact if and only if $\omega$ is regular. 

For the corresponding statement on the matrix-valued Hilbert series, we need only note that
\[ (e_i D e_j)\omega = e_i D\omega e_{\sigma\inv(j)} \iso e_i D e_{\sigma\inv(j)}\]
where $\sigma$ is the automorphism induced by the normal element $\omega$. It follows from the short exact sequence that $h_{\cD}(t) = h_{\cD(-m)}(t)P_\omega\inv t^m + h_A(t)$.

\eqref{hilb2} This follows from (1).

\eqref{hilb3} If $\cD$ is noetherian then so is $A$. For the converse, we apply \cite[Lemma 8.2]{ATV1}.
\end{proof}

Let $R$ be a ring with a complete set $e_1,\hdots,e_n$ of orthogonal idempotents. For example, $R=\kk Q$ with $Q$ a finite quiver. Then $R$ is a \emph{piecewise domain} (PWD) if $ab=0$ implies $a=0$ or $b=0$ for all $a \in e_i R e_j$ and $b \in e_j R e_k$ \cite{GSpwd}. By \cite[Theorem 1.4]{RR3}, if $A=\kk Q/I$ is twisted graded Calabi--Yau of dimension two and each vertex has outdegree two, then $A$ is a piecewise domain.

\begin{lemma}\label{lem.PWD2}
Let $\cD$ be an $\NN$-graded algebra with complete set of orthogonal idempotents $e_0, \dots, e_{n-1}$, let $\omega \in \cD$ be a homogeneous normal {regular} element of degree $m > 0$, and let $A = \cD/\omega\cD$. If $A$ is a piecewise domain with respect to the idempotents $e_h + \omega D$, then $D$ is a piecewise domain with respect to the $e_h$. In particular, if $A$ is a domain, then so is $D$.
\end{lemma}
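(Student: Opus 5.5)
We argue by contradiction, using a degree/valuation argument with respect to powers of $\omega$. Since $\omega$ is homogeneous of positive degree $m$ and $\cD$ is $\NN$-graded, every nonzero homogeneous element lies in only finitely many of the ideals $\omega^k\cD$. This follows by degree: $\omega^k\cD \subset \cD_{\geq km}$. It suffices to treat homogeneous elements, after which we pass to arbitrary elements.

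\emph{Homogeneous case.} Let $a \in e_i\cD e_j$ and $b \in e_j \cD e_k$ be nonzero homogeneous elements. By normality, $\omega^k\cD = \cD\omega^k$. Hence we may write $a = \omega^r a'$ with $r$ maximal, so that $a' \notin \omega\cD$. Likewise $b = \omega^s b'$ with $s$ maximal, so that $b' \notin \omega\cD$. We must check that $a'$ and $b'$ can be taken inside corner pieces. From $\omega e_h = \phi_\omega(e_h)\omega$, and since $\phi_\omega$ permutes the idempotents, we get
\[ e_i\omega^r a' e_j = \omega^r \phi_\omega^{-r}(e_i) a' e_j. \]
Regularity of $\omega$ then lets us replace $a'$ by $\phi_\omega^{-r}(e_i)a'e_j$, and similarly for $b'$. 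Now
\[ ab = \omega^r a'\omega^s b' = \omega^{r+s}\phi_\omega^{-s}(a')b'. \]
Because $\omega$ is regular, $ab=0$ forces $\phi_\omega^{-s}(a')b'=0$. Here $\phi_\omega^{-s}(a') \in e_{i'}\cD e_{j'}$, with $e_{j'}=\phi_\omega^{-s}(e_j)$, and $b' \in e_{j''}\cD e_k$. These are compatible, since the product is nonzero only when the middle idempotents agree; otherwise the relevant product already vanishes trivially. Reducing modulo $\omega$, the images in $A$ satisfy $\overline{\phi_\omega^{-s}(a')}\,\overline{b'}=0$. The element $\phi_\omega^{-s}$ preserves $\omega\cD$, so $\phi_\omega^{-s}(a')\notin\omega\cD$, and neither image is zero. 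This contradicts the assumption that $A$ is a piecewise domain with respect to the $e_h+\omega\cD$.

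\emph{General case.} For nonhomogeneous $a \in e_i\cD e_j$ and $b\in e_j\cD e_k$, we take the highest-degree (or lowest-degree) homogeneous components $a_{\mathrm{top}}$ and $b_{\mathrm{top}}$. Since the idempotents lie in degree $0$, each homogeneous component of $a$ lies in $e_i\cD e_j$, and likewise for $b$. The top component of $ab$ is $a_{\mathrm{top}}b_{\mathrm{top}}$. By the homogeneous case, this component is nonzero, so $ab\neq 0$.

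The domain statement is the case $n=1$.

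The main obstacle is the bookkeeping with idempotents under $\phi_\omega$: one must ensure that after factoring out powers of $\omega$, the cofactors still sit in single corner pieces $e_{i'}\cD e_{j'}$, so that the piecewise-domain hypothesis on $A$ applies. I would handle this as above by multiplying through by idempotents and using regularity, relying on the fact that $\phi_\omega$ permutes the $e_h$, as implicit in the definition of $P_\omega$.
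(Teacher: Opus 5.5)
You have a real, though easily repaired, gap. Your homogeneous case depends on $\phi_\omega$ permuting the set $\{e_0,\dots,e_{n-1}\}$, and that is not a hypothesis of the lemma.

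It does hold whenever $\cD_0=\bigoplus_h\kk e_h$. This covers every $\kk Q/I$ with $I\subseteq(\kk Q)_{\geq 2}$, which is the setting behind the $P_\omega$ of Lemma~\ref{lem.hilb}. In general it can fail. Take $\cD=M_2(\kk)[x;\sigma]$, graded by $x$-degree, with idempotents $E_{11},E_{22}$, $\omega=x$, and $\sigma$ conjugation by $\left(\begin{smallmatrix}1&1\\0&1\end{smallmatrix}\right)$. Then $x$ is normal and regular, and $A\iso M_2(\kk)$ is a piecewise domain for $E_{11},E_{22}$. But $\phi_\omega=\sigma$ sends $E_{11}$ to $\left(\begin{smallmatrix}1&-1\\0&0\end{smallmatrix}\right)$, so it does not permute them. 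Consequently your cofactor $\phi_\omega^{-r}(e_i)a'e_j$ need not lie in any corner $e_h\cD e_j$. The hypothesis on $A$ concerns only corners of the $e_h+\omega\cD$, so it cannot be invoked.

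Separately, your ``compatibility'' sentence is confused. What you need, and what your own computation gives, is that both middle idempotents equal $\phi_\omega^{-s}(e_j)$. If they differed, the product would vanish for trivial reasons and no contradiction would follow.

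The fix removes $\phi_\omega$ entirely. Since $\omega^s\cD=\cD\omega^s$, write $b=b'\omega^s$ with $b'\notin\cD\omega=\omega\cD$, instead of $b=\omega^s b'$. Then
\[ ab=\omega^r a'b'\omega^s , \]
so regularity gives $a'b'=0$, and also $a'=a'e_j$ and $b'=e_jb'$. Because $\omega\cD$ is closed under sums, some $e_\ell a'$ and some $b'e_{\ell'}$ lie outside $\omega\cD$. Then $(e_\ell a')(b'e_{\ell'})=e_\ell a'b'e_{\ell'}=0$, which contradicts the hypothesis on $A$ for the corners $e_\ell,e_j,e_{\ell'}$.

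Decomposing along the idempotents to return to a corner is exactly the device in the paper's proof. There, one takes a counterexample of minimal total degree and strips one factor of $\omega$ at a time, using $\omega\cD=\cD\omega$ for the right factor, and replaces $c$ by a nonzero $e_\ell c$. Your repaired version strips the maximal power of $\omega$ in one step, so it avoids the minimality induction. It also needs no reduction to homogeneous elements, since $\bigcap_k\omega^k\cD\subseteq\bigcap_k\cD_{\geq km}=0$ already makes $r$ and $s$ finite for any nonzero $a,b$. Otherwise the two arguments are the same in substance.
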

\begin{proof}
Suppose that $D$ is not a piecewise domain with respect to the $e_h$. Then there exist $i,j,k \in \{ 0, \hdots, n-1 \}$ such that $a \in e_iDe_j, b \in e_j D e_k, a \neq 0, b \neq 0$, but $ab = 0$. We may assume that $a,b$ are homogeneous. We further assume that $\deg(a)+\deg(b)$ is minimal among all such counterexamples across all $i,j,k$.

If $a,b \notin \omega D$, then $a + \omega D, b + \omega D, ab + \omega D$ are all non-zero in $A$ because $A$ is a piecewise domain. So $ab \neq 0$.

Suppose $a \in \omega D$. Then there exists a homogeneous $c \in D$ such that $a = \omega c$ and $\deg(c) = \deg(a) - \deg(\omega)$. Since $ab = \omega cb = 0$ and $\omega$ is regular, we have $cb =0$.

Since $D = \oplus_{h=0}^{n-1} e_h D$, we have $e_\ell c \neq 0$ for some $\ell$. Since $b \in e_i D e_j$, $e_\ell c \in e_\ell D e_j$. By the previous direct sum decomposition of $D$, we also have that $e_\ell c b = 0$. Since $\deg(e_\ell c) + \deg(b) < \deg(a) + \deg(b)$, this contradicts the minimality condition. So $ab \neq 0$.

By normality of $\omega, \omega D = D \omega$. So if $b \in \omega D,$ we can repeat the proof with $\omega$ on the right.
\end{proof}

\begin{proposition}\label{prop.lift}
Let $\cD$ be an $\NN$-graded algebra with $\cD/J(\cD)$ separable, let $\omega \in \cD_m$ be a regular normal element of degree $m$, and let $A = \cD/(\omega)$. If $A$ is generalized AS regular of dimension $d$, then $\cD$ is generalized AS regular of dimension $d+1$.
\end{proposition}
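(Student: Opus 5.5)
We prove Proposition~\ref{prop.lift} along the lines of \cite[Proposition 2.1]{CKS}, using Lemma~\ref{lem.rees} as the main tool. Set $S=\cD/J(\cD)$. Since $\omega$ has positive degree $m$, it lies in $\cD_{\geq 1}\subseteq J(\cD)$. Hence $A/J(A)\iso S$, and the idempotents of $S$ are the images of those of $\cD$. The argument has two parts: computing $\Ext^i_{\cD}(S,\cD)$, and bounding the global dimension of $\cD$.

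\textbf{Step 1: the Ext groups.} Apply Lemma~\ref{lem.rees} with $N=S$ and $M=\cD$. Here $\omega$ acts regularly on $\cD$ by hypothesis, and $\cD/\omega\cD=A$. This gives
\[ \Ext^{p+1}_{\cD}(S,\cD)\iso \Ext^p_A(S,{}_\tau A). \]
The twist by $\tau$ (induced by $\phi_\omega$) only changes the module structure on one side by an automorphism. Since $A$ is generalized AS regular of dimension $d$, the right-hand side is $\delta_{pd}V'$, where $V'$ is $V$ twisted on one side by the automorphism of $S$ induced by $\phi_\omega$. Twisting an invertible $\kk$-central $(S,S)$-bimodule by an automorphism leaves it invertible and $\kk$-central, so $V'$ again has these properties.

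For $i=0$, note that $\Hom_{\cD}(S,\cD)$ embeds in the socle of $\cD$. Any element $x$ in it satisfies $\omega x=0$, because $\omega\in J(\cD)$ kills $S$. Regularity of $\omega$ then forces $x=0$, so $\Hom_{\cD}(S,\cD)=0$. Together with the isomorphism above, this shows $\Ext^i_{\cD}(S,\cD)=\delta_{i,d+1}V'$.

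We must also check that the isomorphism of Lemma~\ref{lem.rees} respects the $(S,S)$-bimodule structure. The left structure comes from functoriality in $N$, applied to right multiplication by $S$ on $S$. The right structure comes from functoriality in $M$, applied to right multiplication on $\cD$. Right multiplication by elements of $\cD$ preserves regularity of the $\omega$-action, because $\omega$ is normal.

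\textbf{Step 2: global dimension.} By \cite[Proposition 3.18]{RR2}, it suffices to show $\pdim({}_{\cD}S)=d+1$. We have $\pdim_A S=d$, and $\omega$ acts regularly on $\cD$, so $A$ has a length-one free resolution $0\to \cD(-m)\to\cD\to A\to 0$ over $\cD$. A standard change-of-rings argument (the Rees lemma for projective dimension) gives $\pdim_{\cD} S\le \pdim_A S+1=d+1$: resolve $S$ over $A$ by graded projective $A$-modules, each a direct summand of some $A\otimes_{S}$-free module, and use that each such $A$-module has $\cD$-projective dimension at most one. Equality follows from Step 1, since $\Ext^{d+1}_{\cD}(S,\cD)\neq 0$. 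Equivalently, one can use Lemma~\ref{lem.rees} with $M$ ranging over graded free modules and a minimal-resolution argument, since $\pdim S=\max\{i:\Ext^i(S,\cD)\neq 0\}$ for locally finite graded algebras of finite global dimension.

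\textbf{Main obstacle.} The delicate point is the bimodule compatibility in Step 1, together with confirming that the twist $\tau$ preserves invertibility. I would handle both by tracking $\phi_\omega$ explicitly on the idempotents $e_i$.
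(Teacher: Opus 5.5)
Your proof is correct and follows essentially the same route as the paper: Lemma~\ref{lem.rees} with $N=S$, $M=\cD$ gives the Ext groups, and $\pdim_{\cD}S=\pdim_A S+1$ together with \cite[Proposition 3.18]{RR2} gives the global dimension. The only differences are that the paper quotes \cite[Theorem 7.3.5(i)]{MR} for the projective-dimension equality, whereas you derive the upper bound by change of rings and the lower bound from $\Ext^{d+1}_{\cD}(S,\cD)\neq 0$, and that you spell out points the paper leaves to ``the result follows'': the twist by $\tau$, the vanishing of $\Ext^0_{\cD}(S,\cD)$ by regularity of $\omega$, and the bimodule compatibility.
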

\begin{proof}
Set ${}_{\cD} S = \cD/J(\cD)$ as a left $A$-module. Note that ${}_A S$ is the image of ${}_{\cD} S$ in the quotient. By \cite[Proposition 3.18]{RR2}, $d=\gldim(A) = \pdim(_A S)$. Since $A$ is generalized AS regular, then $\pdim({}_A S) < \infty$. The element $\omega$ is assumed regular and so applying \cite[Theorem 7.3.5(i)]{MR} gives $\pdim(_{\cD} S) = \pdim({}_A S) = d+1$. Thus, $\gldim(\cD)=d+1$. We apply Lemma \ref{lem.rees} with $N=S$ and $M=\cD$ to obtain
\[  \Ext_{\cD}^{p+1}({}_{\cD} S,\cD) \iso \Ext_A^p({}_A S,A) \]
for all $p$. The result follows.
\end{proof}

Suppose $A=\kk Q/(\omega)$ is twisted graded Calabi--Yau algebra of dimension two where $\omega=\sum_{x \in Q_1} \tau(x) x$ for some $\tau \in GL(\kk Q_1)$ and let $\mu^A$ be the Nakayama automorphism associated to $A$. Suppose $A \iso \cD/(\omega)$ where $\omega$ is a normal regular element of $D$ which induces the automorphism $\phi$ on $\kk Q$. In other words, $\cD$ is the algebra
\begin{align}\label{eq.norm_ext}
    \cD(Q,\omega,\phi) = \kk Q/(\omega a - \phi(a)\omega : a \in Q_1).
\end{align}
We call $\cD=\cD(Q,\omega,\phi)$ a \emph{normal extension of $A$}.

\begin{lemma}\label{lem.phi_restr}
Keep the above setup and suppose that $\cD=\cD(Q,\omega,\phi)$ is twisted Calabi--Yau of dimension three. Let $\phi_\omega$ be the restriction of $\phi$ to $\kk Q_0$. Then $\phi_\omega = (\mu_0^A)\inv$. 
\end{lemma}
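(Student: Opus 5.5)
The plan is to compute $\phi_\omega$ directly from the normality relation $\omega x = \phi(x)\omega$, evaluated at the trivial paths $x = e_i$. We then compare the result with the vertex permutation $\pi$ built into $\omega = \sum_{x\in Q_1}\tau(x)x$. By the dimension-two discussion in Section \ref{sec.background}, this permutation is $\pi = \mu_0^A$, and $\omega$ decomposes as $\omega = \sum_{i\in Q_0}\omega_i$ with $\omega_i = \omega e_i = e_{\pi\inv(i)}\omega e_i$. Equivalently, $e_j\omega = \omega_{\pi(j)}$ for every $j \in Q_0$.

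First I will check that each $\omega_i$ is nonzero in $\cD$ and that the $\omega_i$ lie in distinct Peirce components. The defining relations $\omega a - \phi(a)\omega$ of $\cD(Q,\omega,\phi)$ in \eqref{eq.norm_ext} are homogeneous of degree $3$. Hence $\cD_2 = (\kk Q)_2$, and it suffices to see $\omega_i \neq 0$ in $\kk Q$. This holds because $\tau$ is bijective and maps $e_{\pi(j)}Ve_i$ onto $e_jVe_i$ for suitable $j$; each vertex has an incoming arrow, since $A$ has finite GK dimension and the Hilbert series formula $h_A = (I-Mt+Pt^2)\inv$ excludes sinks and sources. Alternatively, nonvanishing follows from the regularity of $\omega$ in $\cD$, because $\omega e_i = \omega\cdot e_i$ and $e_i \neq 0$. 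The elements $\omega_i \in e_{\pi\inv(i)}\cD_2 e_i$ sit in pairwise different components $e_{\pi\inv(i)}\cD e_i$ as $i$ varies.

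Next, write $\phi(e_i) = e_{\phi_\omega(i)}$. This is legitimate because $\phi$ is a graded automorphism, so it permutes the primitive idempotents; this is also how $\phi_\omega$ is regarded as a permutation of $Q_0$. The normality relation then gives
\[ \omega_i = \omega e_i = \phi(e_i)\omega = e_{\phi_\omega(i)}\omega = \omega_{\pi(\phi_\omega(i))}. \]
Since the $\omega_k$ are nonzero and lie in distinct Peirce components, this forces $\pi(\phi_\omega(i)) = i$ for all $i$. Hence $\phi_\omega = \pi\inv = (\mu_0^A)\inv$.

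The main point needing care is the identification of $\pi$ with $\mu_0^A$, together with the fact that the Nakayama automorphism is only defined up to an inner automorphism. Inner automorphisms of a graded algebra with $A_0 = \kk Q_0$ fix each $e_i$, so $\mu_0^A$ is well defined and equals $\pi$ by \cite{RR1}. As a consistency check, one could also invoke Theorem \ref{thm.lift}, $\mu^A = \mu^\cD\circ\phi_\omega$; the direct computation above avoids needing $\mu^\cD$.
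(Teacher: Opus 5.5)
Your argument is correct under the setup as stated, but it takes a different route from the paper.

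You evaluate normality at the trivial paths, $\omega e_i=\phi(e_i)\omega$. The setup grants this when it assumes $\omega$ is a normal regular element of $\cD$ inducing $\phi$. You then compare the Peirce components of $\omega_i$ and $\omega_{\pi(\phi_\omega(i))}$. The hypothesis that $\cD$ is twisted Calabi--Yau of dimension three is never used.

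The paper works only with the relations that actually present $\cD(Q,\omega,\phi)$ in \eqref{eq.norm_ext}, namely $\omega a-\phi(a)\omega$ for $a\in Q_1$. By the dimension-three hypothesis, the relations of $\cD$ have adjacency matrix $M^T$, so there are exactly $|Q_1|$ of them. This forces each $\omega a-\phi(a)\omega$ to lie in a single component $e_x\kk Q e_y$. For an arrow $a$ with source $r$ and target $s$, comparing the endpoints of $\omega a\in e_{\pi\inv(r)}\kk Q e_s$ with those of $\phi(a)\omega$ then gives $\phi(e_r)=e_{\pi\inv(r)}$ and $\phi(e_s)=e_{\pi\inv(s)}$.

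Your route is shorter, but be aware of where the content sits. No relation at the $e_i$ is imposed in \eqref{eq.norm_ext}. Since $\cD_2=(\kk Q)_2$, the identity $\omega e_i=\phi(e_i)\omega$ in $\cD$ is equivalent to $\phi_\omega(i)=\pi\inv(i)$. So your proof really shows that the conclusion is already built into normality on $\kk Q_0$. It would be circular for an algebra known only through its presentation, which is the situation the paper's argument is designed to handle.

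If you want to argue from the presentation alone and stay elementary, regularity of $\omega$ (also part of the setup) can replace the Calabi--Yau hypothesis. If $\omega a$ and $\phi(a)\omega$ lay in different Peirce components, then $\omega a=e_{\pi\inv(r)}(\omega a-\phi(a)\omega)e_s$ would lie in the defining ideal. That would give $\omega a=0$ in $\cD$ although $a\neq 0$ in $\cD_1=\kk Q_1$.

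There is also a minor index slip. With the convention $\tau(e_iVe_j)\subset e_{\pi\inv(j)}Ve_i$, it is $e_iVe_{\pi(j)}$, not $e_{\pi(j)}Ve_i$, that $\tau$ maps onto $e_jVe_i$.
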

\begin{proof}
Set $\pi=\mu_0^A$. Let $M$ be the adjacency matrix for $Q$. Then $M^T$ is the adjacency matrix for the relations on $\cD$ (see the proof of \cite[Proposition 8.2]{RR1}). Hence, if $|Q_1|=k$, then there are exactly $k$ homogeneous relations defining $D$. This implies that, for each $a \in Q_1$, $\omega a - \phi(a)\omega$ is homogeneous (otherwise the relation splits in multiple homogeneous relations). Suppose $a$ has source $r$ and target $s$. Then 
\[ \omega a = \omega (e_r a e_s) = e_{\pi\inv(r)} \omega e_r a e_s\]
and hence
\[ 
\phi(a) \omega 
    = e_{\pi\inv(r)}\phi(a) \omega e_s 
    = e_{\pi\inv(r)}\phi(a) e_{\pi\inv(s)} \omega e_s.\]
Thus, $\phi(e_r)=e_{\pi^{-1}(r)}$ and $\phi(e_s)=e_{\pi^{-1}(s)}$.
\end{proof}

\begin{proposition}\label{prop.norm_ext}
Let $A=\kk Q/(\omega)$ be a twisted graded Calabi--Yau algebra of dimension two where $\omega=\sum_{x \in Q_1} \tau(x) x$ for some $\tau \in GL(\kk Q_1)$. Let $M$ be the adjacency matrix of $Q$ and $P$ the permutation matrix corresponding to $\mu_0^A$. Let $\phi$ be a graded automorphism of $\kk Q$ and suppose that the normal extension $\cD=\cD(Q,\omega,\phi)$ as in \eqref{eq.norm_ext} is twisted graded Calabi--Yau of dimension three. Then $\cD$ has type $(M,P^2,4)$ and the following are equivalent:
\begin{enumerate}
\item \label{ext1} $A$ is noetherian.
\item \label{ext2} $A$ has finite GK dimension.
\item \label{ext3} $\cD$ is noetherian.
\item \label{ext4} $\cD$ has finite GK dimension.
\end{enumerate}
\end{proposition}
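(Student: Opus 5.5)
Since $\cD$ is twisted graded Calabi--Yau of dimension three, \cite{BSW} makes it a derivation-quotient algebra of a twisted superpotential of some degree $d$. The defining relations $\omega a - \phi(a)\omega$ have degree $3$ (as $\omega$ has degree $2$), so $d = 4$. The quiver of $\cD$ is $Q$, so its adjacency matrix is $M$. It remains to identify the permutation matrix of $\mu_0^{\cD}$, which we do with Theorem \ref{thm.lift}.

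\textbf{Identifying the Nakayama permutation.} The element $\omega$ is normal in $\cD$, and $\cD/(\omega) \iso A$. We need $\omega$ to be regular in $\cD$. We expect this is the main obstacle, since Theorem \ref{thm.lift} and Lemma \ref{lem.hilb} both assume it. To get it, we compare Hilbert series.
\begin{itemize}
\item The Hilbert series of a twisted CY algebra of dimension three of type $(M,P',4)$ is $p(t)^{-1}$ with $p(t) = I - Mt + P'M^Tt^3 - P't^4$, by \eqref{eq.dim3_hilb}.
\item We have $h_A = (I - Mt + Pt^2)^{-1}$.
\item By Lemma \ref{lem.phi_restr}, $\phi_\omega = (\mu_0^A)^{-1}$, so $P_\omega^{-1} = P$ up to the transpose convention.
\end{itemize}
So we would check that $(I - Pt^2)(I - Mt + Pt^2) = I - Mt + P^2M^Tt^3 - P^2t^4$ when $P' = P^2$. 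This uses $PM = MP$ and $PM^T = M$. The latter holds because $\tau(e_iVe_j) \subset e_{\pi^{-1}(j)}Ve_i$ gives $M_{ij} = M_{\pi^{-1}(j),i}$. Then Lemma \ref{lem.hilb}\eqref{hilb1} gives regularity.

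The argument runs a bit circularly, because we do not yet know $P'$. Instead we argue directly. There is always a surjection $\cD(-2) \xrightarrow{\cdot\omega} \omega\cD$, hence the coefficientwise inequality $h_{\cD} \le h_A (I - P^{-1}t^2)^{-1}$. The minimal resolution of $\cD$ (from the superpotential) pins down $h_{\cD}$ in terms of the unknown $P'$. Comparing low-degree coefficients, at $t^3$ and $t^4$, should force $P' = P^2$ and equality throughout, which gives regularity.

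If that comparison proves awkward, we would instead combine Theorem \ref{thm.lift}, $\mu^A = \mu^{\cD} \circ \phi_\omega$, with Lemma \ref{lem.phi_restr}. This gives $\mu_0^{\cD} = \mu_0^A \circ \phi_\omega^{-1} = (\mu_0^A)^2$, so the permutation matrix is $P^2$ and the type is $(M,P^2,4)$.

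\textbf{Equivalences.} Once $\omega$ is regular:
\begin{itemize}
\item Lemma \ref{lem.hilb}\eqref{hilb3} gives \eqref{ext1}$\Leftrightarrow$\eqref{ext3}.
\item Lemma \ref{lem.hilb}\eqref{hilb2} gives \eqref{ext2}$\Leftrightarrow$\eqref{ext4}.
\item For \eqref{ext2}$\Rightarrow$\eqref{ext1}, apply \cite[Theorem 6.6]{RR2} to the dimension-two algebra $A$.
\item For \eqref{ext1}$\Rightarrow$\eqref{ext2}: a twisted CY algebra of dimension two with $\rho(M) > 2$ has exponential growth. It contains a free subalgebra, as in \cite{RR1,RR2}, and so is not noetherian. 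Hence noetherian forces $\rho(M) \le 2$, and then finite GK dimension.
\end{itemize}
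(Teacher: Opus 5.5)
Your fallback is the paper's proof. The adjacency matrix and superpotential degree come from the construction, Theorem~\ref{thm.lift} together with Lemma~\ref{lem.phi_restr} gives $\mu_0^{\cD}=(\mu_0^A)^2$, and the equivalences come from Lemma~\ref{lem.hilb} and \cite[Theorem 6.6]{RR2}. What you call the main obstacle is not one in the paper's setting. In the paragraph introducing \eqref{eq.norm_ext}, a normal extension is defined by requiring $\omega$ to be a normal \emph{regular} element of $\cD$. So Theorem~\ref{thm.lift} and Lemma~\ref{lem.hilb} apply without further work.

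Your direct Hilbert-series comparison is still a valid alternative, and it proves slightly more: regularity follows from the Calabi--Yau hypothesis instead of being assumed. The details run as follows.
\begin{itemize}
\item Since $e_l\omega=e_l\omega e_{\pi(l)}$ with $\pi=\mu_0^A$, the surjection $x\mapsto x\omega$ onto $(\omega)$ gives $h_{\cD}\le h_A+t^2h_{\cD}P$.
\item Iterating, $h_{\cD}\le h_A(I-Pt^2)^{-1}=(I-Mt+PMt^3-P^2t^4)^{-1}$ coefficientwise, with equality iff $\omega$ is regular.
\item Write $h_{\cD}=(I-Mt+P'M^Tt^3-P't^4)^{-1}$, where $P'$ is the permutation matrix of $\mu_0^{\cD}$.
\item The $t^3$ coefficients give $PM\le P'M^T$ entrywise. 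By your identity $M=PM^T$ we have $PM=P^2M^T$, and both sides have entry sum $|Q_1|$, so $P'M^T=PM$.
\item The $t^4$ coefficients then differ by exactly $P'-P^2$, which must be entrywise nonpositive. Since both are permutation matrices, $P'=P^2$.
\end{itemize}
The two series therefore coincide, which gives regularity and the type $(M,P^2,4)$ at once. This route needs neither Theorem~\ref{thm.lift} nor Lemma~\ref{lem.phi_restr}. The paper's route is shorter but relies on regularity being part of the hypotheses.

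One step does need repair. For \eqref{ext1}$\Rightarrow$\eqref{ext2}, containing a free subalgebra does not by itself rule out the noetherian property. Noetherian domains, even division rings, can contain free algebras. What actually excludes noetherianity is the exponential growth, for a locally finite graded algebra. The cleanest fix is to cite \cite[Theorem 6.6]{RR2} for \eqref{ext1}$\Leftrightarrow$\eqref{ext2}, as the paper does.
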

\begin{proof}
That $M$ is the adjacency matrix of $\cD$ and $\omega$ has degree four are both by construction. By Theorem \ref{thm.lift}, $\mu_0^A = \mu_0^D \circ \phi_\omega$ where $\phi_\omega$ is as defined in Lemma \ref{lem.phi_restr}. By that lemma, $\phi_\omega = (\mu_0^A)\inv$. Hence, $\mu_0^{\cD} = (\mu_0^A)^2$.

By Lemma~\ref{lem.hilb}, $\eqref{ext1} \Leftrightarrow \eqref{ext3}$ and $\eqref{ext2} \Leftrightarrow \eqref{ext4}$. Now $\eqref{ext1} \Leftrightarrow \eqref{ext2}$ by \cite[Theorem 6.6]{RR2}.
\end{proof}

Keeping the notation of Proposition \ref{prop.norm_ext}, we see that $P=P_\omega\inv$.

\section{Type A}\label{sec.typeA}

In this section, we study normal extensions of the twisted Calabi--Yau algebras $A_n(\bq)$, as defined in Definition \ref{defn.typeA}, and use tools from Section \ref{sec.lift} to prove Theorem \ref{thm.down-up}. The major results have all been shown previously in \cite{GK1}. Here we give some alternative proofs using methods from Section \ref{sec.lift} that will then be applied in Section \ref{sec.typeBC} to types B and C.

Fix $n \in \ZZ_+$ and let $\alpha,\beta \in \kk^n$. By \cite[Proposition 1.1]{GK1}, a $\kk$-basis for $\cH_n(\alpha,\beta,\gamma)$ consists of paths of the form:
\begin{align}\label{eq.Aform}
    (u_iu_{i+1}\cdots u_{i+k})(d_{i+k}u_{i+k})^j(d_{i+k}d_{i+k-1} \cdots d_{i+k-\ell}),
\end{align}
with $i \in \dblQ_0$ and $j,k,\ell \geq 0$. 

By \cite[Lemma 4.1]{GK1}, if some $\beta_i = 0$, then $\cH=\cH_n(\alpha,\beta,\gamma)$ is not right noetherian. A similar proof shows that $\cH$ is not left noetherian. The following shows that one can also appeal to the opposite ring.

\begin{proposition}\label{prop.Aop}
Let $\alpha, \beta, \gamma \in \kk^n$. Then $\cH_n(\alpha,\beta,\gamma)^\op \cong \cH_n(\alpha,\beta,\gamma)$.
\end{proposition}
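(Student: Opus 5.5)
The plan is to write down an explicit anti-automorphism of $\cH=\cH_n(\alpha,\beta,\gamma)$, that is, an algebra isomorphism $\cH \to \cH^\op$. We use the conventions forced by the relations \eqref{eq:relationsA}: $u_i$ is an arrow $e_i \to e_{i+1}$ and $d_i$ is an arrow $e_{i+1}\to e_i$. For instance, the path $d_{i-1}u_{i-1}u_i$ runs $e_i\to e_{i-1}\to e_i \to e_{i+1}$.

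First observe that $(\kk\dblQ)^\op \iso \kk\dblQ^{\op}$, where $\dblQ^{\op}$ is the quiver obtained from $\dblQ$ by reversing all arrows. Reversing arrows sends $u_i\colon e_i\to e_{i+1}$ to an arrow $e_{i+1}\to e_i$, which is exactly the shape of $d_i$, and vice versa. So $\dblQ^\op\iso\dblQ$ via the vertex identity together with the swap $u_i\leftrightarrow d_i$. Concretely, define a $\kk$-linear anti-automorphism $\psi$ of $\kk\dblQ$ by
\[ \psi(e_i)=e_i,\quad \psi(u_i)=d_i,\quad \psi(d_i)=u_i, \]
extended by $\psi(pq)=\psi(q)\psi(p)$. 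This is well defined on paths because it is compatible with sources and targets: a path from $e_r$ to $e_s$ is sent to a path from $e_s$ to $e_r$. It is also involutive.

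The key step is to check that $\psi$ maps the ideal of relations onto itself. Applying $\psi$ to the first relation of \eqref{eq:relationsA} gives
\[ \psi(d_{i-1}u_{i-1}u_i)=d_id_{i-1}u_{i-1},\qquad \psi(u_id_iu_i)=d_iu_id_i,\qquad \psi(u_iu_{i+1}d_{i+1})=u_{i+1}d_{i+1}d_i,\qquad \psi(u_i)=d_i. \]
Hence the first relation is sent precisely to the second relation, with the same scalars $\alpha_i,\beta_i,\gamma_i$. Since $\psi$ is an involution, the second relation is sent back to the first. Thus $\psi(I)=I$, where $I$ is the ideal of relations, and $\psi$ descends to an anti-automorphism of $\cH$. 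Equivalently, it gives an isomorphism $\cH\to\cH^\op$.

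There is no real obstacle in this argument. The only point needing care is the bookkeeping of arrow directions and the index shift, namely confirming that the image of the first relation at vertex $i$ is the second relation at the same vertex $i$, rather than at a shifted index. The displayed computation settles this. As a consequence, right noetherianity and left noetherianity of $\cH$ are equivalent, so \cite[Lemma 4.1]{GK1} also yields the failure of left noetherianity when some $\beta_i=0$.
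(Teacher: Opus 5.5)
Your proposal is correct and is essentially the paper's argument. The paper writes out the relations of $\cH^\op$ and checks that $e_i\mapsto e_i'$, $u_i\mapsto d_i'$, $d_i\mapsto u_i'$ is an isomorphism $\cH\to\cH^\op$; this is exactly your involutive anti-automorphism $\psi$, which swaps the two families of relations at the same vertex $i$.
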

\begin{proof}
Denote the elements of $\cH^\op$ with primes. Note that $u_i' = e_{i+1}'u_i'e_i'$ and $d_i' = e_i' d_i' e_{i+1}'$.

The relations of Definition~\ref{defn.qdu} in the opposite ring are
\begin{align*}
	u_i'u_{i-1}'d_{i-1}'  &= \alpha_i u_i'd_i'u_i' + \beta_i d_{i+1}'u_{i+1}'u_i'+\gamma_i u_i', \\
	u_{i-1}'d_{i-1}'d_i' &= \alpha_i d_i'u_i'd_i' + \beta_i d_i'd_{i+1}'u_{i+1}' +\gamma_i d_i'.
\end{align*}
So, the bijection $\phi:\cH \to \cH^\op$ defined by
\[ \phi(e_i) = e_i', \qquad \phi(u_i) = d_i', \qquad \phi(d_i) = u_i'\]
is an isomorphism.
\end{proof}

Fix $\bp,\bq \in (\kk^\times)^n$. Let $A=A_n(\bq)$ and let $\phi$ be the automorphism of $\kk\dblQ$ defined by 
\[
\phi(e_i) = e_{i-1}, \quad
\phi(u_i) = p_i u_i, \quad
\phi(d_i) = p_i\inv d_i.
\]
Let $\cD=\cD(\dblQ,\omega,\phi)$ with
\[ \omega = \sum_{i \in Q_0} d_iu_i - q_i u_{i+1} d_{i+1}.\]
Then the relations in $\cD$ are
\begin{align*}
\omega u_i - \phi(u_i)\omega
    &= d_{i-1}u_{i-1}u_i - (p_i+q_{i-1})u_id_iu_i + p_iq_i u_iu_{i+1}d_{i+1} \\
\omega d_i - \phi(d_i)\omega
    %&= d_iu_id_i - q_iu_{i+1}d_{i+1}d_i - p_i\inv d_id_{i-1}u_{i-1} + p_i\inv q_{i-1} d_iu_id_i \\
    &= -p_i\inv( d_id_{i-1}u_{i-1} - (p_i+q_{i-1})d_iu_id_i + p_iq_iu_{i+1}d_{i+1}d_i).
\end{align*}
These are precisely the relations given in Definition \ref{defn.qdu} 
assuming that, given $\alpha,\beta \in \kk^n$, there is a solution to the equations
\begin{align}\label{eq.alphabeta}
\alpha_i = q_{i-1} + p_i, \qquad
\beta_i = -q_i p_i.
\end{align}
for all $i \in \dblQ_0$. That is, $\omega$ is a normal element in $\cH=\cH_n(\alpha,\beta)$ and $A=\cH/(\omega)$. 

The following considers the single-parameter case on $n$ vertices in which there exists $p,q \in \kk^\times$ such that $p=p_i$ and $q=q_i$ for all $i \in \dblQ_0$. In this case, there is a solution to \eqref{eq.alphabeta}.

\begin{theorem}\label{thm.typeA}
Fix $n \in \ZZ_+$ and let $\alpha,\beta \in \kk$. Let $\cH=\cH_n(\alpha,\beta)$ be as in Definition \ref{defn.qdu}. The following are equivalent.
\begin{enumerate}
    \item $\beta \neq 0$,
    \item \label{Apwd} $\cH$ is a piecewise domain, and
    \item \label{Anoeth} $\cH$ is right (or left) noetherian.
\end{enumerate}
Moreover, if any of the above conditions hold, then $\cH$ is twisted graded Calabi--Yau of dimension three. 
\end{theorem}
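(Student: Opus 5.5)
The plan is to realize $\cH=\cH_n(\alpha,\beta)$, when $\beta\neq 0$, as a normal extension of the dimension-two algebra $A=A_n(q)$, and then transfer properties from $A$ to $\cH$ using Section \ref{sec.lift}. Assume $\beta \neq 0$. Since $\kk$ is algebraically closed, I will choose $p,q\in\kk$ with $p+q=\alpha$ and $pq=-\beta$; these are the two roots of $t^2-\alpha t-\beta$. Because $\beta\neq 0$, both roots are nonzero, so $p,q\in\kk^\times$. Setting $p_i=p$ and $q_i=q$ for all $i$ solves \eqref{eq.alphabeta}. By the computation preceding the theorem, $\omega=\sum_i d_iu_i-qu_{i+1}d_{i+1}$ is then normal in $\cH$, it induces $\phi$, and $\cH/(\omega)\iso A_n(q)$.

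The key step, and the one I expect to be the main obstacle, is showing that $\omega$ is regular. I will use Lemma \ref{lem.hilb}\eqref{hilb1}, which reduces this to comparing Hilbert series: it suffices to check that $h^{\tot}_{\cH}(t)=h^{\tot}_{A}(t)(1-t^2)\inv = n(1-t)^{-2}(1-t^2)\inv$. The basis \eqref{eq.Aform} from \cite[Proposition 1.1]{GK1} supplies $h^{\tot}_\cH$. For each starting vertex $i$, the paths are indexed by $(k,j,\ell)$ with length $(k+1)+2j+(\ell+1)$. Some care is needed with the edge cases $k$ or $\ell$ empty, i.e.\ reading the basis as $u$-string, then $(du)^j$, then $d$-string, with the trivial paths $e_i$ included. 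Under that reading, each vertex contributes $(1-t)^{-2}(1-t^2)\inv$, which gives the required identity. Hence $\omega$ is regular.

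Once regularity is in hand, the remaining conclusions follow from earlier results. $A_n(q)$ is twisted graded Calabi--Yau of dimension two by \cite[Lemma 2.2]{GZiso}, so Theorem \ref{thm.lift} (or Proposition \ref{prop.lift}) makes $\cH$ twisted graded Calabi--Yau of dimension three. $A_n(q)$ has finite GK dimension, so it is noetherian by \cite[Theorem 6.6]{RR2}, and Lemma \ref{lem.hilb}\eqref{hilb3} transfers this to $\cH$, on both sides. For the piecewise domain property, every vertex of $\dblQ$ has outdegree two, so $A_n(q)$ is a piecewise domain by \cite[Theorem 1.4]{RR3}; then Lemma \ref{lem.PWD2} lifts this to $\cH$. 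The only subtle point in the case $n=1$ is that $A$ is then the quantum plane, which is a domain, so the same argument applies.

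For the converse directions, suppose $\beta=0$. By \cite[Lemma 4.1]{GK1}, $\cH$ is not right noetherian, and Proposition \ref{prop.Aop} shows it is not left noetherian either. It is also not a piecewise domain. To see this, observe that the relation $d_iu_i\,(d_iu_i - \ldots)$ factors: with $\beta=0$ the relations read $(d_{i-1}u_{i-1}-\alpha u_id_i)u_i=0$. Here $u_i\neq 0$ and $d_{i-1}u_{i-1}-\alpha u_id_i\in e_iDe_i$ is nonzero, because by the basis \eqref{eq.Aform} the terms $d_{i-1}u_{i-1}$ and $u_id_i$ are distinct basis paths. This is a zero-divisor pair, so $\cH$ is not a piecewise domain, which completes the equivalences.
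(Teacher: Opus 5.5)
Your proposal is correct and follows essentially the same route as the paper. You realize $\cH$ as the normal extension of $A_n(q)$ by $\omega$, with $p,q$ the (nonzero) roots of $t^2-\alpha t-\beta$, deduce regularity of $\omega$ from the basis \eqref{eq.Aform} via Lemma \ref{lem.hilb}, and lift the piecewise domain, noetherian, and twisted Calabi--Yau properties using Lemma \ref{lem.PWD2}, Lemma \ref{lem.hilb} with \cite{RR2}, and Proposition \ref{prop.lift}; the converse comes from $(d_{i-1}u_{i-1}-\alpha u_id_i)u_i=0$, \cite[Lemma 4.1]{GK1}, and Proposition \ref{prop.Aop}, and you correctly state the Hilbert series relation as $h^{\tot}_{\cH}=h^{\tot}_A(1-t^2)^{-1}$.
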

\begin{proof}
Suppose $\beta \neq 0$. Set $A=\cH/(\omega)$. By 
\cite[Proposition 1.1]{GK1} (see \eqref{eq.Aform}), $h_{\cH}^{\tot}=(1-t^2)h_A^{\tot}$ and so, by Lemma \ref{lem.hilb}, $\omega$ is regular. Now \eqref{Apwd} follows from Lemma \ref{lem.PWD2} and \eqref{Anoeth} from Proposition \ref{prop.norm_ext}.

If $\beta = 0$, then $(d_{i-1}u_{i-1}-\alpha_i u_id_i)u_i = 0$, so \eqref{Apwd} fails, and \eqref{Anoeth} fails by \cite[Lemma 4.1]{GK1}.

Finally, given the equivalent conditions, $\cH$ is twisted graded Calabi--Yau of dimension three by Proposition \ref{prop.lift}.
\end{proof}

Theorem \ref{thm.typeA} can be generalized to the multiparameter case with $\alpha,\beta \in \kk^n$ where (1) is replaced by $\beta_i \neq 0$ for all $i\in\dblQ_0$. Unfortunately, for some choices of $\alpha,\beta$, there is no solution to \eqref{eq.alphabeta}, as the following example shows.

\begin{example}
Let $n > 1$. Set $\beta_i = 1$ for $i \in \dblQ_0$, $\alpha_0 \neq 0$, and $\alpha_i = 0$ for $i = 1, \dots, n-1$. Then $\alpha_i = q_{i-1} + p_i, \beta_i = -q_i p_i$ cannot be solved simultaneously for all $q_i, p_i$.
\end{example}
\begin{proof} 
Suppose that there is a solution. Then for all $i \in \dblQ_0$, $\beta_i = 1$ implies $q_i = -\frac{1}{p_i}$. Then for $i \in \dblQ_0$, 
\[ \alpha_i = -\frac{1}{p_{i-1}}+ p_i = 0 \implies p_i = \frac{1}{p_{i-1}}.\]
Thus for all $j,k \in \dblQ_0$, $p_j = p_k$ when $j,k$ are both odd or both even.

If $n$ is odd, then $p_0 = p_{n-1}$. But also $p_0 = p_n$ by the convention that the indices are taken mod $n$. Thus $p_0 = p_i$ for all $i = 0, \dots, n_1$. This implies that the $\alpha_i$ are constant, a contradiction.

If $n$ is even, then $p_0 = p_{n-2}$ and $p_1 = p_{n-1}$. We have
\[
\alpha_{n-1} = 0 = -\frac{1}{p_0} + p_1,
\quad 0 \neq \alpha_0 = -\frac{1}{p_1} + p_0,
\]
which is also a contradiction.
\end{proof}

We now establish certain conditions that guarantee a solution for the $p_i$ and $q_i$. In these cases, we obtain an analogous result to Theorem \ref{thm.typeA}.

In \cite[Lemma~5.1]{GK1},  basic isomorphisms $\phi:\cH_n(\alpha,\beta) \to \cH_n(\alpha', \beta')$ are described, with $\alpha, \beta, \alpha', \beta' \in \kk^n$. Further, if $n \geq 3$, any isomorphism is a composition of such $\phi$ \cite[Theorem~5.2]{GK1}. We now show that one can solve \eqref{eq.alphabeta} given a pair $\alpha, \beta$ if and only if the equations for a pair $\alpha', \beta'$, given by one of the isomorphisms of \cite[Lemma~5.1]{GK1}. We need not consider the actual isomorphisms, just their effect on $\alpha, \beta$.

\begin{lemma}\label{lem:solvePQ}
Let $\alpha, \beta, \alpha', \beta' \in \kk^n$. Suppose there exist $p,q \in \kk^n$ that solve \eqref{eq.alphabeta}. Then there exist $p', q' \in \kk^n$ solving $\alpha'_i = q'_{i-1} + p'_i, \beta'_i = -p'_i q'_i$ in the following cases.
\begin{enumerate}
\item \label{solve1} If there exists $\lambda \in (\kk^\times)^n$ such that for all $i \in \dblQ_0$, \[
\alpha'_i = \lambda_i \lambda_{i-1}^{-1}\alpha_i,\quad  \beta'_i = \lambda_{i+1} \lambda_{i-1}^{-1}\beta_i,
\]
then
\[
p_i' = \lambda_i \lambda_{i-1}^{-1} p_i, \quad q_i' = \lambda_{i+1}\lambda_i^{-1} q_i
\]
is a solution.
\item \label{solve2} If for all $i \in \dblQ_0$,
\[
\alpha_i' = \alpha_{i-1}, \quad \beta_i' = \beta_{i-1},
\]
then
\[
p_i' = p_{i-1}, \quad q_i' = q_{i-1}
\]
is a solution.
\item \label{solve3} If for all $i \in \dblQ_0$, $\beta_i \neq 0$ and
\[
\alpha_i' = -\beta_{n-i-1}^{-1}\alpha_{n-i-1}, \quad \beta_i' = \beta_{n-i-1}^{-1},
\]
then
\[
p_i' = -q_{n-i-1}^{-1}q_{n-i-2}p_{n-i-1}^{-1}, \quad q_i' = -q_{n-i-2}^{-1}
\]
is a solution.
\end{enumerate}
\end{lemma}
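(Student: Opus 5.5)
The plan is a direct verification. In each of the three cases I substitute the proposed $p',q'$ into the two target equations
\[ \alpha'_i = q'_{i-1}+p'_i, \qquad \beta'_i = -p'_iq'_i, \]
rewrite everything using the known relations $\alpha_j = q_{j-1}+p_j$ and $\beta_j=-q_jp_j$ from \eqref{eq.alphabeta}, and compare with the stated formulas for $\alpha'_i,\beta'_i$. Indices are read modulo $n$ throughout, as in the rest of the paper. I would also check that each $p'_i,q'_i$ is nonzero whenever the $p_i,q_i$ are, since the solutions are meant to be usable for Theorem~\ref{thm.typeA}-type arguments.

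For case \eqref{solve1}, the sum equation gives
\[ q'_{i-1}+p'_i=\lambda_i\lambda_{i-1}^{-1}q_{i-1}+\lambda_i\lambda_{i-1}^{-1}p_i=\lambda_i\lambda_{i-1}^{-1}\alpha_i. \]
The product equation gives
\[ -p'_iq'_i=\lambda_i\lambda_{i-1}^{-1}\lambda_{i+1}\lambda_i^{-1}(-p_iq_i)=\lambda_{i+1}\lambda_{i-1}^{-1}\beta_i. \]
Case \eqref{solve2} is an index shift: replacing $i$ by $i-1$ in \eqref{eq.alphabeta} gives exactly the required equations.

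Case \eqref{solve3} is the only one needing care, and I expect it to be the main obstacle, mainly because of the sign. Put $j=n-i-1$, so that $(i-1)$ corresponds to $n-(i-1)-1=j+1$ and the shifted $q'$ index works out; the hypothesis $\beta_j\neq0$ guarantees $p_j,q_j\neq 0$. For the product,
\[ -p'_iq'_i = -q_j^{-1}q_{j-1}p_j^{-1}q_{j-1}^{-1} = -(q_jp_j)^{-1} = \beta_j^{-1}, \]
which matches $\beta'_i$. For the sum, $q'_{i-1}=-q_j^{-1}$, so
\[ q'_{i-1}+p'_i = -q_j^{-1}p_j^{-1}(p_j+q_{j-1}) = \beta_j^{-1}\alpha_j. \]
This differs in sign from the stated $-\beta_j^{-1}\alpha_j$. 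I would recheck the index bookkeeping first. If the discrepancy persists, the fix is to note that replacing $(p',q')$ by $(-p',-q')$ leaves $-p'_iq'_i$ unchanged and negates $q'_{i-1}+p'_i$. So one of the pair $\pm(p',q')$ solves the system for the stated $\alpha',\beta'$, and I would record whichever sign the isomorphism of \cite[Lemma~5.1]{GK1} actually requires. Either way, the existence claim of the lemma holds.
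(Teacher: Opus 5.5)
Your proposal is correct and follows the paper's route: direct substitution into \eqref{eq.alphabeta} in each of the three cases.

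In case~(3) your computation is right, and the sign discrepancy is genuine rather than an indexing artifact. Since $\beta_{n-i-1}=-p_{n-i-1}q_{n-i-1}$, one has $-\beta_{n-i-1}^{-1}=+p_{n-i-1}^{-1}q_{n-i-1}^{-1}$. The paper's proof instead writes $-p_{n-i-1}^{-1}q_{n-i-1}^{-1}$ at exactly this step. As a result, the displayed pair $(p',q')$ solves the stated system only when every $\alpha_j=0$. For example, $n=1$ and $p=q=1$ give $\alpha'_0=2$, but the stated pair $p'_0=q'_0=-1$ gives $q'_0+p'_0=-2$.

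The correct explicit solution is the negated pair
\[ p'_i=q_{n-i-1}^{-1}q_{n-i-2}p_{n-i-1}^{-1}, \qquad q'_i=q_{n-i-2}^{-1}, \]
which is exactly what your $(p',q')\mapsto(-p',-q')$ observation produces. You should assert this outright instead of hedging about rechecking indices or deferring to whichever sign the cited isomorphism needs. The stated formulas for $\alpha',\beta'$ already force this sign; they do match the reflection isomorphism $u_i\mapsto d'_{-i-1}$, $d_i\mapsto u'_{-i-1}$. The existence claim, which is all that Proposition~\ref{prop.noSoln} uses, is unaffected.
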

\begin{proof}
In case \eqref{solve1}, note that $q_{i-1}' = \lambda_{i}\lambda_{i-1}^{-1} q_{i-1}$. Then
\[
\alpha_i' = \lambda_i \lambda_{i-1}^{-1} \alpha_i = \lambda_i \lambda_{i-1}^{-1} (q_{i-1} + p_i) = q_{i-1}' + p_i'
\]
and
\[
\beta_i' = \lambda_{i+1}\lambda_{i-1}^{-1} \beta_i = \lambda_{i+1}\lambda_{i-1}^{-1} (-p_i q_i) = - (\lambda_i \lambda_{i-1}^{-1} p_i)(\lambda_{i+1}\lambda_i^{-1}q_i) = - p_i' q_i'.
\]

Case \eqref{solve2} is clear.

For case \eqref{solve3}, note that $q_{i-1}' = -q_{n-i-1}^{-1}$. Then
\[
\alpha_i' = -\beta_{n-i-1}^{-1}\alpha_{n-i-1} = -p_{n-i-1}^{-1}q_{n-i-1}^{-1}(q_{n-i-2} + p_{n-i-1}) = p_i' + q_{i-1}'
\]
and
\[
\beta_i' = \beta_{n-i-1}^{-1} = -p_{n-i-1}^{-1} q_{n-i-1}^{-1} = - (-p_{n-i-1}^{-1} q_{n-i-1}^{-1} q_{n-i-2})(- q_{n-i-2}^{-1}) = -p_i' q_i'.
\]
So, in all cases, we have a solution.
\end{proof}

\begin{proposition}\label{prop.noSoln}
Fix $n \in \ZZ_+$ with $n \geq 3$ and let $\alpha, \beta, \alpha', \beta' \in \kk^n$. Suppose $\cH_n(\alpha,\beta) \cong \cH_n(\alpha', \beta')$. Then \eqref{eq.alphabeta} has a solution if and only if the analogous prime equations have a solution.
\end{proposition}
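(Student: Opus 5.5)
By \cite[Theorem~5.2]{GK1}, since $n \geq 3$, every isomorphism $\cH_n(\alpha,\beta) \to \cH_n(\alpha',\beta')$ is a composition of the basic isomorphisms of \cite[Lemma~5.1]{GK1}. I would therefore reduce to the case of a single basic isomorphism and induct on the number of factors in the composition. For a single factor, I would check that the parameter change it induces on $(\alpha,\beta)$ is one of the three transformations in Lemma~\ref{lem:solvePQ}: a rescaling by some $\lambda \in (\kk^\times)^n$, a rotation of indices, or the reflection/inversion in case~\eqref{solve3}. Lemma~\ref{lem:solvePQ} then shows that a solution of \eqref{eq.alphabeta} for $(\alpha,\beta)$ produces a solution of the primed equations. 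Iterating along the composition gives one direction of the equivalence.

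For the converse, I would avoid a separate argument by using that each basic transformation is invertible within the same family.
\begin{enumerate}
\item The inverse of the rescaling by $\lambda$ is the rescaling by $\lambda^{-1}$.
\item The inverse of the rotation $i \mapsto i-1$ is a rotation in the other direction, which is itself a power of the rotation, namely the rotation applied $n-1$ times.
\item For the reflection, applying case~\eqref{solve3} twice returns the original parameters. Indeed, $\beta''_i = (\beta'_{n-i-1})^{-1} = \beta_{i}$, and similarly $\alpha''_i = -\beta'^{-1}_{n-i-1}\alpha'_{n-i-1} = -\beta_i\cdot(-\beta_i^{-1}\alpha_i) = \alpha_i$. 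So this transformation is an involution.
\end{enumerate}
Hence the isomorphism $\cH_n(\alpha',\beta') \to \cH_n(\alpha,\beta)$ is also a composition of basic isomorphisms. Applying the forward direction to it gives the converse.

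The main obstacle is matching the basic isomorphisms in \cite[Lemma~5.1]{GK1} exactly with the parameter changes in Lemma~\ref{lem:solvePQ}. The indexing conventions, such as $n-i-1$ versus $n-i$, and the hypothesis $\beta_i \neq 0$ in case~\eqref{solve3} both need care. For the latter I would note that the reflection isomorphism only exists when all $\beta_i \neq 0$, because it involves $\beta_i^{-1}$, so the hypothesis of case~\eqref{solve3} holds automatically whenever that isomorphism occurs. I would also check that the $\beta_i \neq 0$ hypothesis is preserved by the other two transformations, so the involution argument applies at each step.
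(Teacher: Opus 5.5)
Your proposal is correct and follows the paper's proof: use \cite[Theorem~5.2]{GK1} to write the isomorphism as a composition of basic isomorphisms, note that each one induces one of the parameter changes in Lemma~\ref{lem:solvePQ}, and iterate. The only difference is the converse, where the paper simply applies the same argument to the inverse isomorphism $\cH_n(\alpha',\beta') \to \cH_n(\alpha,\beta)$, since \cite[Theorem~5.2]{GK1} applies to any isomorphism; your check that each basic transformation is invertible within the family (including that case~\eqref{solve3} is an involution) is correct but not needed.
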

\begin{proof}
By \cite[Theorem 5.2]{GK1}, if $\phi:\cH_n(\alpha,\beta) \to \cH_n(\alpha', \beta')$ is an isomorphism, then $\alpha_i', \beta_i'$ have formulas that are compositions of the formulas in the hypotheses Lemma~\ref{lem:solvePQ}. Thus a solution for the $\alpha, \beta$ implies a solution for the $\alpha', \beta'$, and conversely by considering $\phi^{-1}$.
\end{proof}

Since Proposition \ref{prop.noSoln} shows that sometimes we cannot find appropriate $p_i, q_i$, we examine when we still have normal regular elements of degree $2$.

\begin{proposition}
Fix $n \in \ZZ_+$ and let $\alpha, \beta\in \kk^n$.
Let $\cH = \cH_n(\alpha, \beta)$ with $\beta_i \neq 0$ for all $i\in\dblQ_0$. Let $u = \sum_{i=0}^{n-1} u_i, d = \sum_{i=0}^{n-1} d_i$. Then $u^2$ and $d^2$ are normal  if and only if $\alpha_i = 0$ for all $i$. In this case of all $\alpha_i = 0$, $u^2$ and $d^2$ are also regular.
\end{proposition}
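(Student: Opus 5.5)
The plan is a direct computation with the relations \eqref{eq:relationsA}, using the $\kk$-basis \eqref{eq.Aform} from \cite[Proposition 1.1]{GK1} to decide linear independence. For $u^2$ the argument runs as follows; $d^2$ is then handled by symmetry.

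Write $u^2=\sum_i u_iu_{i+1}$. Commuting with the $u_j$ is automatic, since $u^2u_j=u_{j-2}u_{j-1}u_j=u_{j-2}u^2$. So normality of $u^2$ comes down to comparing $u^2d_i$ with $\cH_1u^2$. By degree and vertex bookkeeping, $u^2d_i=u_{i-1}u_id_i$ runs from $e_{i-1}$ to $e_i$ and has degree $3$. Any element of $\cH_1u^2$ with this source and target is a multiple of $d_{i-2}u_{i-2}u_{i-1}$, since $d_{i-2}$ is the only arrow out of $e_{i-1}$ that can be followed by $u_{i-2}u_{i-1}$. The first relation in \eqref{eq:relationsA}, at vertex $i-1$, gives
\[ d_{i-2}u_{i-2}u_{i-1} = \alpha_{i-1}u_{i-1}d_{i-1}u_{i-1} + \beta_{i-1}u_{i-1}u_id_i. \]
In \eqref{eq.Aform}, the paths $u_{i-1}d_{i-1}u_{i-1}$ (the case $k=0$, $j=1$, $\ell=0$) and $u_{i-1}u_id_i$ (the case $k=1$, $j=0$, $\ell=0$) are distinct basis elements. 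Hence $u_{i-1}u_id_i$ lies in $\kk\, d_{i-2}u_{i-2}u_{i-1}$ if and only if $\alpha_{i-1}=0$.

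This gives both directions. If some $\alpha_{i-1}\neq0$, then $u^2d_i\notin \cH u^2$, so $u^2$ is not normal. The same inclusion test with left and right swapped also settles the dual condition $\cH u^2\subseteq u^2\cH$. Conversely, if all $\alpha_i=0$, then $u^2d_i=\beta_{i-1}\inv d_{i-2}u^2$. Define $\phi$ on generators by $\phi(e_i)=e_{i-2}$, $\phi(u_i)=u_{i-2}$ and $\phi(d_i)=\beta_{i-1}\inv d_{i-2}$. One checks that $\phi$ preserves the relations, since the relations with $\alpha=0$ are homogeneous in the right way and scaling $d$ by $\beta\inv$ is compatible. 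Therefore $\phi$ is an automorphism with $u^2x=\phi(x)u^2$ for all $x$, so $u^2\cH=\cH u^2$.

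For regularity when $\alpha\equiv0$, left multiplication by $u^2$ sends the basis path $(u_i\cdots u_{i+k})(d_{i+k}u_{i+k})^j(d\cdots)$ to the basis path with $k$ increased by $2$. This map is injective on basis elements, so $u^2$ is left regular. Right regularity follows from normality: if $xu^2=0$, then $u^2\phi\inv(x)=0$, so $x=0$. For $d^2$ I will use Proposition \ref{prop.Aop}. Its isomorphism $\cH\to\cH^{\op}$ sends $u_i\mapsto d_i'$ with the same $(\alpha,\beta)$, so $d^2$ in $\cH$ corresponds to $u^2$ computed in $\cH^{\op}$. Normality and regularity are left-right symmetric properties, so the statement for $d^2$ follows.

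The main obstacle is the independence claim in the first step, namely that no other degree-3 expression can rescue normality. This rests entirely on reading off \eqref{eq.Aform} correctly, together with the vertex-counting argument that $\cH_1u^2$ and $u^2\cH_1$ are one-dimensional at the relevant vertex pair. Checking that $\phi$ is well defined is routine.
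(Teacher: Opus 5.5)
Your proof is correct, and its core is the same as the paper's. Both reduce normality to the single relation $d_{i-2}u_{i-2}u_{i-1}=\alpha_{i-1}u_{i-1}d_{i-1}u_{i-1}+\beta_{i-1}u_{i-1}u_id_i$ plus linear independence in the basis \eqref{eq.Aform}. The only difference there is that the paper runs the mirror test $d_iu^2\in u^2\cH$ where you run $u^2d_i\in\cH u^2$. Both also get left regularity from the fact that prepending $u^2$ maps basis paths injectively to basis paths.

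The routes differ in the remaining steps:
\begin{itemize}
\item \emph{Right regularity.} The paper uses $\beta_i\neq 0$ to assert a second, ``reversed'' normal form with the $d$'s first, and repeats the argument. You instead write down the normalizing automorphism $\phi$. Your indices are the correct ones: $u^2d_i=\beta_{i-1}\inv d_{i-2}u^2$, where the paper has slips. The well-definedness check you call routine does go through: $\phi$ sends the two relations at vertex $j$ to nonzero multiples of the corresponding relations at vertex $j-2$. You then transfer left regularity to the right via $xu^2=u^2\phi\inv(x)$. This avoids having to justify a second normal form.
\item \emph{The element $d^2$.} The paper only says ``similar.'' You apply the isomorphism $\cH\to\cH^{\op}$ of Proposition \ref{prop.Aop}, which keeps the parameters and carries $u^2$ to the element $d^2$ viewed in $\cH^{\op}$. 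This makes that step rigorous at no cost.
\end{itemize}

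One small caveat, which the paper's proof shares. Your claim that $e_{i-1}\cH_1u^2e_i$ is spanned by $d_{i-2}u_{i-2}u_{i-1}$ holds only for $n\geq 3$. For $n=2$ the path $u_{i-1}u_iu_{i+1}$ also lies there, and for $n=1$ so does $u^3$. These are further basis elements independent of $u_{i-1}d_{i-1}u_{i-1}$ and $u_{i-1}u_id_i$, so comparing coefficients still forces $\alpha_{i-1}=0$. The argument therefore survives with a one-line adjustment.
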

\begin{proof}
We consider $u^2$, with the proof for $d^2$ being similar.

First, $u_i u^2 = u_i u_{i+1} u_{i+2} = u^2 u_{i+2}$ and $u^2 u_i = u_{i-2} u^2$. So, normality holds when multiplying by $u_i$.

Second, $d_i u^2 = d_i u_i u_{i+1}$, which has source $i+1$ and target $i+2$. The only polynomial of degree $3$ that begins with $u^2$ is $a u^2 d_{i+2} = a u_{i+1}u_{i+2}d_{i+2}$ for some $a \in \kk^\times$. Thus, via
\begin{align*}\label{eq:relationsA}
	d_{i-1}u_{i-1}u_i &= \alpha_i u_id_iu_i + \beta_i u_iu_{i+1}d_{i+1} +\gamma_i u_i \\
	d_id_{i-1}u_{i-1} &= \alpha_i d_iu_id_i + \beta_i u_{i+1}d_{i+1}d_i +\gamma_i d_i,
\end{align*}
we have
\[
d_i u_i u_{i+1} =a u_{i+1}u_{i+2}d_{i+2} =  \alpha_i u_id_iu_i + \beta_i u_iu_{i+1}d_{i+1}.
\]
Thus, $d_i u^2 = u^2 (\beta_i d_{i+2})$ if and only if $\alpha_i = 0$ and $a = \beta_i$. Since all $\beta_i \neq 0$, we also have $u^2 d_i = (\beta_{i-2}^{-1} d_{i-2} u^2)$. Therefore, $u^2$ is normal if and only if all $\alpha_i = 0$.

For regularity, let $f \in \cH$ with $f \neq 0$. Writing $f$ in the basis \eqref{eq.Aform}, we see that $u^2 f$ is also of that form. Since $f$ has nonzero basis coefficients, so does $u^2 f$.

Now relations in $\cH = \cH(0, \beta)$ are defined by \eqref{eq:relationsA} with $\alpha_i = \gamma_i = 0$ and $\beta_i \neq 0$. Multiplying by $\beta_i^{-1}$, we have relations that put $d$'s first, so we also have a basis with the ordering of \eqref{eq.Aform} reversed. Thus, $fu^2 \neq 0$ and so $u^2$ is regular.
\end{proof}

The restriction that $\alpha_i = \gamma_i = 0$ and $\beta_i \neq 0$ for all $i\in\dblQ_0$ is broad enough that sometimes we can solve for $p_i, q_i$ and sometimes not.

\begin{proposition}\label{prop.alphabeta}
Let $n > 0$. Suppose $\alpha_i = 0$ and $\beta_i \neq 0$ for all $i\in\dblQ_0$. There exists a solution to \eqref{eq.alphabeta} if and only if $n$ is odd or if $n$ is even and
\[
\beta_0 \beta_2 \dots \beta_{n-2}
= \beta_1 \beta_3 \dots \beta_{n-1}.
\]
\end{proposition}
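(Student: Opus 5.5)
With $\alpha_i=0$ the system \eqref{eq.alphabeta} becomes $q_{i-1}=-p_i$ and $\beta_i=-q_ip_i$ for all $i\in\dblQ_0$ (indices mod $n$). The plan is to eliminate the $q$'s, turning the system into a cyclic recursion for the $p$'s, and then read off when that recursion closes up around the cycle.

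Substituting $q_i=-p_{i+1}$ into the second equation gives $\beta_i=p_ip_{i+1}$. So a solution exists if and only if there are $p_0,\dots,p_{n-1}\in\kk^\times$ with $p_ip_{i+1}=\beta_i$ for all $i$, indices mod $n$. Given such $p$, we set $q_i=-p_{i+1}$, which lies in $\kk^\times$. Note that every $p_i$ must be nonzero, because $\beta_i\neq 0$.

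Now fix $p_0=c\in\kk^\times$. The relations force $p_{i+1}=\beta_ip_i\inv$, and inductively
\[ p_k = c^{(-1)^k}\,\frac{\beta_{k-1}\beta_{k-3}\cdots}{\beta_{k-2}\beta_{k-4}\cdots}, \]
where the products are taken over indices $0\le j\le k-1$ of the appropriate parity. Equivalently, $p_{2m}=c\prod_{j<m}\beta_{2j+1}\beta_{2j}\inv$ and $p_{2m+1}=c\inv\beta_0\prod_{j<m}\beta_{2j+2}\beta_{2j+1}\inv$. These formulas satisfy the relations $p_ip_{i+1}=\beta_i$ for $0\le i\le n-2$ automatically. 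The only remaining condition is the wrap-around relation $p_{n-1}p_0=\beta_{n-1}$.

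If $n$ is even, then $n-1$ is odd, so $p_{n-1}$ carries the factor $c\inv$ and $c$ cancels in $p_{n-1}p_0$. Computing, $p_{n-1}p_0=\beta_0\beta_2\cdots\beta_{n-2}/(\beta_1\beta_3\cdots\beta_{n-3})$. Setting this equal to $\beta_{n-1}$ is exactly the stated parity-product condition, so the condition is both necessary and sufficient. If $n$ is odd, then $n-1$ is even, so $p_{n-1}=c\cdot R$ for a nonzero ratio $R$ of the $\beta$'s, and the wrap-around condition reads $c^2R=\beta_{n-1}$. Since $\kk$ is algebraically closed, we can choose $c$ to be a square root of $\beta_{n-1}R\inv$, which gives a solution. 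The case $n=1$ is covered by the odd case: there $q_0=-p_0$ and $p_0^2=\beta_0$, which is solvable.

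The only real bookkeeping obstacle is getting the closed-form products right across the wrap-around index. To avoid errors, I would instead argue with the alternating product $\prod_{i=0}^{n-1}\beta_i^{(-1)^i}$. For $n$ even, each $p_i$ appears exactly twice in this product, once with each sign, so the product of the relations equals $1$. This gives necessity immediately. For sufficiency, I would construct the $p_i$ recursively as above and then check the single closing relation.
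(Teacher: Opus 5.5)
Your proof is correct and is essentially the paper's argument: eliminate the $q_i$ to get the cyclic recursion $p_ip_{i+1}=\beta_i$, express every $p_i$ in closed form in terms of $p_0$, and observe that the single closing condition is the solvable equation $c^2R=\beta_{n-1}$ when $n$ is odd, while for $n$ even it no longer involves $p_0$ and is exactly the parity-product identity. Incidentally, your recursion $p_{i+1}=\beta_ip_i^{-1}$ is the correct one; the paper's displayed recursion $p_i=\beta_{i-1}^{-1}p_{i-1}^{-1}$ has a stray inverse on $\beta_{i-1}$, though its closed formula for $p_i$ agrees with yours.
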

\begin{proof}
Suppose there is a solution. Then for all $i\in\dblQ_0$, 
\[
q_i = -\beta_i p_i^{-1}, \quad p_i = \beta_{i-1}^{-1} p_{i-1}^{-1}.
\]
Then induction gives
\[
p_i = \beta_{i-1}\beta_{i-3} \dots \beta_1 (\beta_{i-2} \dots \beta_0)^{-1} p_0^{(-1)^i}.
\]
Taking $i = n$ and noting $p_0 = p_n$, we have
\begin{equation}\label{eq.p0eqn}
p_0 = \beta_{n-1}\beta_{n-3} \dots \beta_1 (\beta_{n-2} \dots \beta_0)^{-1} p_0^{(-1)^n}.
\end{equation}
Conversely, if \eqref{eq.p0eqn} has a solution, then we can back substitute to get a solution for all $p_i, q_i$.

If $n$ is odd, then \eqref{eq.p0eqn} has the solution $p_0 = \pm \sqrt{\beta_{n-1}\beta_{n-3} \dots \beta_1 (\beta_{n-2} \dots \beta_0)^{-1}}$. If $n$ is even and 
$\beta_0 \beta_2 \dots \beta_{n-2}
= \beta_1 \beta_3 \dots \beta_{n-1}$, then any $p_0 \neq 0$ works. On the other hand, if $\beta_0 \beta_2 \dots \beta_{n-2}
\neq \beta_1 \beta_3 \dots \beta_{n-1}$, there is no solution.
\end{proof}

\section{Types B and C}
\label{sec.typeBC}

In this section, we apply the techniques above, used in type A, to type B and C. In particular, we prove Theorem \ref{thm.down-up} for type B in Theorem \ref{thm.typeB} and for type C in Theorem \ref{thm.typeC}. 

\subsection{Type B}
First we consider type B as described in Definition~\ref{defn.qDU_B}.

\begin{lemma}\label{lem.Bbasis}
Fix $n \geq 2$ and let $\alpha,\beta \in \kk$. 
For any $i\in\tldQ_0$, set $x_i = b_i a_i$. A $\kk$-basis for $\cB=\cB_n(\alpha,\beta)$ consists of paths of the form:
\begin{equation}\label{eq.BbasisWithXi}
a_ia_{i+1}a_{i+2}\cdots a_{i+(k-1)}
x_{i+k} x_{i+k+1} \cdots x_{i+k+(\ell-1)}b_{i+k+\ell}^m
\end{equation}
with $i\in\tldQ_0$ and $k,\ell,m\geq 0$. Consequently, $h_{\cB}^{\tot}(t) = (1-t)^{-2}(1-t^2)\inv$.
\end{lemma}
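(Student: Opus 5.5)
We use the Diamond Lemma (Bergman's, adapted to path algebras). The relations of $\cB=\cB_n(\alpha,\beta)$ are written as rewriting rules whose leading words are the left-hand sides:
\[ b_ia_ia_{i+1} \to \alpha a_ib_{i+1}a_{i+1} + \beta a_ia_{i+1}b_{i+2}, \qquad b_i^2a_i \to \alpha b_ia_ib_{i+1} + \beta a_ib_{i+1}^2. \]
Each rule moves $a$'s to the left and $b$'s to the right. So we order paths first by length and then, for instance, by an ordering in which a word is smaller if its $a$'s occur earlier (reverse lexicographic with $a<b$, or by the sum of the positions of the $a$'s). Every rule is then decreasing, and the order is compatible with concatenation.

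The reduced paths are those avoiding every subword $b_ia_ia_{i+1}$ and $b_i^2a_i$. We then check that these are exactly the paths in \eqref{eq.BbasisWithXi}.

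\emph{Normal form.} Any path is a word in $a$'s and $b$'s along the quiver. After a $b$, an $a$ may appear only if the $b$ is not immediately preceded by another $b$ and the $a$ is not immediately followed by another $a$. Hence once the first $b$ appears, the rest of the word is a sequence of blocks $x_j=b_ja_j$, possibly followed by a final power $b^m$. Before the first $b$ there is a run $a_i\cdots a_{i+k-1}$. A single $a$ following a lone $b$ is allowed, which is exactly how the $x$'s arise; $x_jx_{j+1}=b_ja_jb_{j+1}a_{j+1}$ contains no forbidden subword. Conversely, every word of the stated form avoids both leading words. This gives the correspondence between reduced paths and \eqref{eq.BbasisWithXi}.

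\emph{Ambiguities.} This is the main step. The overlap ambiguities are:
\begin{enumerate}
\item $b_ib_ia_ia_{i+1}$ (overlapping $b_i^2a_i$ and $b_ia_ia_{i+1}$);
\item $b_ib_ib_ia_i$ (a self-overlap of $b^2a$).
\end{enumerate}
There are no overlaps of the form $\cdots a_{i+1}$ with a leading word starting at $a$, since every leading word begins with $b$. For each ambiguity we reduce both ways to normal forms using the rules repeatedly and compare. The coefficients come out as polynomials in $\alpha,\beta$, and these should agree identically. This resembles the classical down-up computation, where $b$ plays $d$ and $a$ plays $u$ with the vertex structure shifted. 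The cleanest route is to treat $b^2a$ and $baa$ as a "$d^2u$/$du^2$" pair and mirror the known resolution for $\cH_1$ from \cite[Proposition 1.1]{GK1}, keeping track of indices mod $n$. I expect this bookkeeping to be the main obstacle. It must be done for general $i$, but it is uniform in $i$.

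\emph{Hilbert series.} Count the basis paths of length $N$ with a fixed starting vertex $i$. A basis path is determined by $(k,\ell,m)$ with $k+2\ell+m=N$. The generating function per vertex is therefore $(1-t)^{-2}(1-t^2)^{-1}$.

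For the total Hilbert series as stated, the per-vertex count must be multiplied by $n$, giving $n(1-t)^{-2}(1-t^2)^{-1}$, consistent with $h^{\tot}_{B_n}=n(1-t)^{-2}$. Comparing with $h_{B_n(\bq)}^{\tot}$ also confirms the relation $h^{\tot}_{\cB}=h^{\tot}_{B}(1-t^2)^{-1}$ needed later for Lemma~\ref{lem.hilb}. If the statement's factor $n$ is omitted, we would read it as this per-vertex (or normalized) count.
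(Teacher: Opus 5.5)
Your strategy is the paper's: apply the Diamond Lemma with leading words $b_ia_ia_{i+1}$ and $b_i^2a_i$, identify the normal words with \eqref{eq.BbasisWithXi}, and count basis paths by $(k,\ell,m)$. Your normal-form analysis is correct.

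The gap is the step you yourself flag as the main one: you never resolve the ambiguity. ``These should agree identically'' is exactly what has to be proved. Without it you only know that the paths \eqref{eq.BbasisWithXi} span $\cB$, not that they are linearly independent. It also does not follow from the computation being ``uniform in $i$''. By Remark~\ref{rmk.Bparams}, this same overlap fails to resolve in general once the parameters depend on the vertex, so the check has real content.

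The check is short:
\begin{enumerate}
\item The two one-step reductions of $b_i^2a_ia_{i+1}$ differ by $\beta\bigl(a_ib_{i+1}^2a_{i+1}-b_ia_ia_{i+1}b_{i+2}\bigr)$, because the two terms $\alpha\, b_ia_ib_{i+1}a_{i+1}$ cancel.
\item Reduce again, using the rule at vertex $i+1$ for the first word and the rule at vertex $i$ for the second. Both words become $\alpha\, a_ib_{i+1}a_{i+1}b_{i+2}+\beta\, a_ia_{i+1}b_{i+2}^2$. This is exactly where the equality of the parameters at $i$ and $i+1$ is used.
\end{enumerate}

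Alternatively, your down-up analogy can be made rigorous. $\cB_n(\alpha,\beta)$ is the $\ZZ_n$-covering of the down-up algebra $\cH_1(\alpha,\beta)$ (generators $u=u_0$, $d=d_0$) for the grading $\deg u=1$, $\deg d=0$, with $a_i\leftrightarrow u$ and $b_i\leftrightarrow d$. Erasing subscripts identifies $e_i\cB$ with $\cH_1(\alpha,\beta)$ as graded vector spaces. The standard basis $\{u^k(du)^\ell d^m\}$ then transfers to \eqref{eq.BbasisWithXi}. As written, though, you neither carry out the computation nor establish this identification.

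Two smaller points. First, $b_ib_ib_ia_i$ is not an ambiguity. No proper suffix of $b_i^2a_i$ (namely $b_ia_i$ or $a_i$) is a prefix of $b_j^2a_j$, and $b_i^3a_i$ contains only one leading word. The only overlaps are $b_i^2a_ia_{i+1}$, as in the paper. Relatedly, your ``sum of positions of the $a$'s'' order is compatible with concatenation only among words with the same number of $a$'s. That is harmless here, since every rule preserves the $a$-count, but the paper's deg-lex order with every $b_j$ above every $a_k$ avoids the issue.

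Second, you are right about the missing factor $n$. Each of the $n$ vertices contributes $(1-t)^{-2}(1-t^2)^{-1}$, so $h_{\cB}^{\tot}(t)=n(1-t)^{-2}(1-t^2)^{-1}$. In the form $h_{\cB}^{\tot}=(1-t^2)^{-1}h_B^{\tot}$ that you wrote, this is what Lemma~\ref{lem.hilb} needs in the proof of Theorem~\ref{thm.typeB}.
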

\begin{proof}
We order the paths by $b_0 > b_1 > \cdots > b_{n-1} > a_0 > a_1 > \cdots > a_{n-1}$ so that the leading terms of the defining ideal are $b_ia_ia_{i+1}$ and $b_i^2a_i$ for $i \in \tldQ_0$. Thus, the only overlap ambiguities are of the form $b_i^2a_ia_{i+1}$. We verify below that these ambiguities resolve:
\begin{align*}
(b_i^2a_i)a_{i+1} - b_i(b_ia_ia_{i+1})
        &= (\alpha b_ia_ib_{i+1} + \beta a_ib_{i+1}^2)a_{i+1} - b_i(\alpha a_ib_{i+1}a_{i+1} + \beta a_ia_{i+1}b_{i+2}) \\
        %&= \alpha b_ia_ib_{i+1}a_{i+1} + \beta a_i(b_{i+1}^2a_{i+1})
            %- \alpha b_ia_ib_{i+1}a_{i+1} - \beta (b_ia_ia_{i+1})b_{i+2} \\
        &= \beta (a_i(b_{i+1}^2a_{i+1}) - (b_ia_ia_{i+1})b_{i+2}) \\
        &= \beta \left( a_i ( \alpha b_{i+1}a_{i+1}b_{i+2} + \beta a_{i+1}b_{i+2}^2 )
        		- ( \alpha a_ib_{i+1}a_{i+1} + \beta a_ia_{i+1}b_{i+2} ) b_{i+2}\right) = 0.
\end{align*}
Hence, a $\kk$-basis for $\cB$ consists of those paths that avoid the leading terms. It is straightforward to see that these are the paths in \eqref{eq.BbasisWithXi}.

There is a bijection between paths with source $e_i$ and monomoials $x^k y^\ell z^m$ in $\kk[x,y,z]$ where $x,z$ are given degree 1 and $y$ has degree 2. The statement on the total Hilbert series is now clear.
\end{proof}

\begin{remark}\label{rmk.Bparams}
Fix $n \geq 2$ and let $\alpha,\beta \in \kk^n$. Let $\cB_n(\alpha,\beta)$ be the quotient of $\kk\tldQ$ by the relations
\begin{align*}
	b_ia_ia_{i+1} &= \alpha_i a_ib_{i+1}a_{i+1} + \beta_i a_ia_{i+1}b_{i+2} \\
    b_i^2a_i &= \alpha_i b_ia_ib_{i+1} + \beta_i a_ib_{i+1}^2,
\end{align*}
for $i \in \tldQ_0$. An argument as in Lemma \ref{lem.Bbasis} shows that the ambiguities resolve if and only if $\alpha_i=\alpha_{i+1}$ and $\beta_i=\beta_{i+1}$ for all $i \in Q_0$. Hence, in contrast to type A, for type B there is only a $2$-parameter family of algebras for each $n$.
\end{remark}

Generically, the class of type B algebras is preserved under taking opposite rings. However, when $\beta = 0$, this fails.

\begin{proposition}
\label{prop.Bop}
Let $\alpha, \beta \in \kk$ with $\beta \neq 0$. Then $\cB(\alpha, \beta)^\op \cong \cB(-\beta^{-1}\alpha,  \beta^{-1})$.
\end{proposition}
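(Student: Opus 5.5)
The plan is to write down an explicit isomorphism, in the same way as Proposition \ref{prop.Aop}. It will be an anti-isomorphism $\cB_n(\alpha,\beta)\to\cB_n(\alpha',\beta')$ with $\alpha'=-\beta\inv\alpha$ and $\beta'=\beta\inv$, which is the same thing as an isomorphism $\cB^\op\to\cB_n(\alpha',\beta')$.

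First I compute the relations of $\cB^\op$. Denote elements of $\cB^\op$ with primes. In $\cB^\op$ the arrow $a_i'$ goes from $e_{i+1}'$ to $e_i'$, and $b_i'$ is a loop at $e_i'$. Reversing the order of the defining relations gives
\begin{align*}
a_{i+1}'a_i'b_i' &= \alpha\, a_{i+1}'b_{i+1}'a_i' + \beta\, b_{i+2}'a_{i+1}'a_i', \\
a_i'b_i'b_i' &= \alpha\, b_{i+1}'a_i'b_i' + \beta\, b_{i+1}'b_{i+1}'a_i'.
\end{align*}

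Since $\tldQ^\op$ is a cycle of the opposite orientation, I will identify it with $\tldQ$ by the vertex reflection $i\mapsto -i$. Define a map of path algebras $\kk\tldQ^\op\to\kk\tldQ$ by
\[ e_i'\mapsto e_{-i},\qquad a_i'\mapsto a_{-i-1},\qquad b_i'\mapsto b_{-i}. \]
This respects sources and targets: $a_i'$ goes from $i+1$ to $i$, and $a_{-i-1}$ goes from $-i-1$ to $-i$.

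Next I substitute this map into the relations above.
\begin{itemize}
\item In the first relation, put $j=-i-2$. It becomes
\[ a_ja_{j+1}b_{j+2}=\alpha\, a_jb_{j+1}a_{j+1}+\beta\, b_ja_ja_{j+1}. \]
Because $\beta\neq0$, this can be solved for the leading term:
\[ b_ja_ja_{j+1}=-\beta\inv\alpha\, a_jb_{j+1}a_{j+1}+\beta\inv a_ja_{j+1}b_{j+2}. \]
This is exactly the first relation of $\cB_n(\alpha',\beta')$.
\item In the second relation, put $k=-i-1$. It becomes
\[ a_kb_{k+1}^2=\alpha\, b_ka_kb_{k+1}+\beta\, b_k^2a_k, \]
which rearranges to
\[ b_k^2a_k=-\beta\inv\alpha\, b_ka_kb_{k+1}+\beta\inv a_kb_{k+1}^2. \]
This is exactly the second relation of $\cB_n(\alpha',\beta')$.
\end{itemize}

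As $i$ runs over $\tldQ_0$, so do $j$ and $k$. Hence the ideal of relations of $\cB^\op$ maps onto the ideal of relations of $\cB_n(\alpha',\beta')$. The map is a bijection on arrows and vertices, so it is an isomorphism of path algebras, and it therefore descends to an isomorphism $\cB^\op\iso\cB_n(-\beta\inv\alpha,\beta\inv)$.

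The main point needing care is the index bookkeeping. The reflection must carry each family of relations onto the correct family, and the relation spans, not just the individual relations, must correspond. The rescaling by $\beta\inv$ is where $\beta\neq0$ is used. I would double-check the index shifts on a small case such as $n=2$. By Remark \ref{rmk.Bparams}, constant parameters are the only consistent ones, so it is reassuring that $\alpha'$ and $\beta'$ come out constant in $i$.
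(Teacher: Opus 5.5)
Your proof is correct and takes essentially the same route as the paper. You write out the relations of $\cB^\op$, use $\beta\neq 0$ to solve for the leading terms, and match them with the relations of $\cB(-\beta\inv\alpha,\beta\inv)$ via a reflection of the cycle. The paper writes the map in the other direction, $e_i\mapsto e'_{-i+1}$, $a_i\mapsto a'_{-i}$, $b_i\mapsto b'_{-i+1}$. This differs from your $e_i'\mapsto e_{-i}$, $a_i'\mapsto a_{-i-1}$, $b_i'\mapsto b_{-i}$ only by a rotation of the indices, which is an automorphism because the parameters are constant.
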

\begin{proof}
We denote the elements of $\cB^\op$ with primes. Note that $a_{-i}' = e_{-i+1}' a_{-i}' e_{-i}'$ and $b_{-i}' = e_{-i}' b_{-i}' e_{-i}'$.
The relations of Definition~\ref{defn.qDU_B} in the opposite ring are
\begin{align*}
	a_{i+1}'a_i'b_i' &= \alpha a_{i+1}'b_{i+1}'a_i' + \beta b_{i+2}'a_{i+1}'a_i' \\
    a_i'{b'}_i^2 &= \alpha b_{i+1}'a_i'b_i' + \beta {b'}_{i+1}^2a_i',
\end{align*}
Multiplying by $\beta^{-1}$ and rearranging, we have
\begin{align*}
	b_{i+2}'a_{i+1}'a_i'  &= -\beta^{-1}\alpha a_{i+1}'b_{i+1}'a_i' + \beta^{-1} a_{i+1}'a_i'b_i' \\
     {b'}_{i+1}^2a_i' &= -\beta^{-1}\alpha b_{i+1}'a_i'b_i'  + \beta^{-1} a_i'{b'}_i^2,
\end{align*}
Replacing $i$ with $-i-1$ in the first equation and replacing $i$ with $-i$ in the second gives
\begin{align*}
	b_{-i+1}'a_{-i}'a_{-i-1}'  &= -\beta^{-1}\alpha a_{-i}'b_{-i}'a_{-i-1}' + \beta^{-1} a_{-i}'a_{-i-1}'b_{-i-1}' \\
     {b'}_{-i+1}^2a_{-i}' &= -\beta^{-1}\alpha b_{-i+1}'a_{-i}'b_{-i}'  + \beta^{-1} a_{-i}'{b'}_{-i}^2,
\end{align*}
Then the bijection $\phi:\cB(-\beta^{-1}\alpha, \beta^{-1}) \to \cB(\alpha,\beta)^\op$ given by
\[
\phi(e_i) = e'_{-i+1}, \qquad \phi(a_i) = a'_{-i}, \qquad \phi(b_i) = b'_{-i+1}
\]
is an isomorphism.
\end{proof}

When $\beta = 0$, the opposite algebra of a Type B algebra is not Type B. 

\begin{proposition}
Let $\alpha \in \kk$. For all $\alpha', \beta' \in \kk$, $\cB(\alpha, 0) \ncong \cB(\alpha', \beta')^\op$ as $\kk$-algebras.
\end{proposition}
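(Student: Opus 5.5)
The plan is to find a property of the space of quadratic-cubic relations that distinguishes $\cB(\alpha,0)$ from every $\cB(\alpha',\beta')^\op$. The argument then has four steps.

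\textbf{Reduction to graded isomorphisms.} Both algebras have the form $\kk Q/I$ with $I$ generated by homogeneous cubic elements of $(\kk Q)_{\geq 2}$. Suppose $\psi$ is a $\kk$-algebra isomorphism. First compose $\psi$ with an inner automorphism so that it sends the complete set of primitive idempotents $\{e_i\}$ to $\{e'_{\pi(i)}\}$ for a bijection $\pi$ of the vertices. Since $\psi$ carries Jacobson radical to Jacobson radical, and the radical here is the arrow ideal $J$, it induces an isomorphism of the associated graded algebras $\bigoplus J^k/J^{k+1}$. Both algebras are graded by path length and generated in degree one, so each is isomorphic to its associated graded algebra. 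Hence we may assume $\psi$ is graded and respects the idempotents up to $\pi$.

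Such a $\psi$ restricts to a linear isomorphism $\psi_1\colon V=\kk Q_1\to V'$, sending $e_iVe_j$ to $e'_{\pi(i)}V'e'_{\pi(j)}$. Since there are no quadratic relations, $\psi_1^{\otimes 3}$ carries the cubic relation space $R$ onto $R'$, again compatibly with the vertex decomposition $R=\bigoplus e_iRe_j$.

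\textbf{The invariant.} Call a nonzero $r\in e_iRe_j$ \emph{right-factorable} if $r=w\,c$ with $w\in V^{\otimes 2}$ and $c\in V$, and \emph{left-factorable} if $r=c\,w$. These notions are preserved by $\psi_1^{\otimes 3}$. In both quivers every nonzero component $e_iRe_j$ is one-dimensional: there is one relation from $i$ to $i+2$ and one from $i$ to $i+1$. So we may count
\[
N(\text{algebra}) = \#\{\text{components that are right- but not left-factorable}\},
\]
and $N$ is an isomorphism invariant.

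\textbf{Computing $N$ for $\cB(\alpha,0)$.} The $i\to i+2$ relation is
\[
(b_ia_i-\alpha a_ib_{i+1})a_{i+1}.
\]
It is right-factorable. It is not left-factorable, because its terms begin with the two independent arrows $b_i$ and $a_i$ and their tails $a_ia_{i+1}$ and $b_{i+1}a_{i+1}$ are independent. The $i\to i+1$ relation is
\[
b_i(b_ia_i-\alpha a_ib_{i+1}),
\]
which is left-factorable. Hence $N=n\geq 2$.

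\textbf{Computing $N$ for $\cB(\alpha',\beta')^\op$.} Passing to the opposite algebra reverses every word, which swaps left- and right-factorability. So it suffices to count, in $\cB(\alpha',\beta')$ itself, the components that are left- but not right-factorable.
\begin{itemize}
\item The $i\to i+2$ relation $b_ia_ia_{i+1}-\alpha' a_ib_{i+1}a_{i+1}-\beta' a_ia_{i+1}b_{i+2}$ is never left-factorable, since both $b_i$ and $a_i$ occur as first letters with independent tails.
\item The $i\to i+1$ relation $b_i^2a_i-\alpha' b_ia_ib_{i+1}-\beta' a_ib_{i+1}^2$ is left-factorable if and only if $\beta'=0$. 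When $\beta'=0$ it is $b_i(b_ia_i-\alpha' a_ib_{i+1})$. Its last letters are $a_i$ and $b_{i+1}$, with independent prefixes, so it is not right-factorable.
\end{itemize}
If $\beta'\neq0$, no component is left-factorable, so $N(\cB(\alpha',\beta')^\op)=0$. If $\beta'=0$, these $n$ components are left- but not right-factorable in $\cB(\alpha',0)$, so they become right- but not left-factorable in the opposite algebra. Then $N(\cB(\alpha',0)^\op)=n$ as well, and this case needs a finer comparison.

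In that case I would refine the invariant to record, for each factorable component, the source and target \emph{type} of its endpoints. A right- but not left-factorable component is a "loop-type" component (source $i$ to target $i+1$ through a $b$-loop) in $\cB(\alpha',0)^\op$. In $\cB(\alpha,0)$ it is a "distance-two" component. These are distinguished by the vertex permutation pattern: the target of $e_iRe_j$ relative to its source is $j=i+2$ in one algebra and $j=i\pm1$ in the other, and $\pi$ must preserve arrow counts $\dim e_iVe_j$. This becomes delicate when $n\leq 3$, where $i+2$ and $i-1$ can coincide.

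\textbf{Main obstacle.} The step I expect to be hardest is this $\beta'=0$ case, together with the small values of $n$. There I would instead compare directly which arrow is the rank-one factor: an $a$-arrow between distinct vertices versus a $b$-loop. That distinction is intrinsic, since $\psi_1$ maps loops to loops.
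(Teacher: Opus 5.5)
There is a genuine gap in the invariant. Read literally, with $r=wc$ a product in $\kk\tldQ$, left- and right-factorability are vacuous here. Each $e_iVe_k$ is at most one-dimensional, and products in the path algebra are tensor products over $\kk^n$, not over $\kk$. So any $r\in e_i(\kk\tldQ)_3e_j$ can be written $r=\sum_k c_kw_k$, with $c_k$ the arrow from $i$ to $k$ and $w_k\in e_k(\kk\tldQ)_2e_j$. Since $c_kw_l=0$ for $k\neq l$, one always has $r=(\sum_k c_k)(\sum_k w_k)$. For instance, using $b_ib_{i+1}=b_ia_{i+1}=a_ia_i=0$,
\[
b_ia_ia_{i+1}-\alpha' a_ib_{i+1}a_{i+1}-\beta' a_ia_{i+1}b_{i+2}=(b_i+a_i)\bigl(a_ia_{i+1}-\alpha' b_{i+1}a_{i+1}-\beta' a_{i+1}b_{i+2}\bigr),
\]
and $b_i^2a_i-\alpha b_ia_ib_{i+1}=(b_i^2-\alpha b_ia_i)(a_i+b_{i+1})$. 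These contradict each of your non-factorability claims, and under this reading $N=0$ for every algebra in sight.

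Your test (independent first letters with independent tails) is really the rank-one test in $V\otimes_{\kk}V^{\otimes 2}$. For composable elements of this quiver, that means ``all terms share one first (resp.\ last) arrow'', which is an honest invariant. But your count then overlooks $\alpha=0$. The relations $b_ia_ia_{i+1}$ and $b_i^2a_i$ of $\cB(0,0)$ are monomials, factorable on both sides, so $N(\cB(0,0))=0=N(\cB(\alpha',\beta')^{\op})$ for every $\beta'\neq0$, and also for $\alpha'=\beta'=0$. Together with the $\beta'=0$ case, which you leave as a sketch, the invariant settles only part of the statement.

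The reduction to a graded isomorphism is also not justified as written. The arrow ideal is only the graded radical, which an ungraded isomorphism need not preserve, and the ordinary radical does not contain $b_i$: $e_i-b_i$ is not invertible in $e_i\cB e_i$, since $b_i^m\neq0$ for all $m$. You need a Bell--Zhang type isomorphism lemma for graded path algebras, which is what the paper takes from \cite[Theorem~8, Lemma~4]{Gpath}.

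After that reduction, the fact $\dim e_iVe_k\le 1$ is best used directly, as the paper does. A graded vertex-permuting isomorphism sends each arrow to a nonzero multiple of an arrow and loops to loops. Hence it sends distinct paths to multiples of distinct paths with the same loop/non-loop pattern, and carries the cubic relation space of $\cB(\alpha',\beta')^{\op}$ onto that of $\cB(\alpha,0)$. The paper first excludes $\beta'\neq0$ by noetherianity (Theorem~\ref{thm.typeB}, Lemma~\ref{lem.Bnnoeth}), and for $\beta'=0$ derives a contradiction with the basis of Lemma~\ref{lem.Bbasis}.

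In fact the pattern comparison alone covers all $\alpha,\alpha',\beta'$. Every element of the relation space of $\cB(\alpha,0)$ is supported on paths of the patterns (loop, non-loop, non-loop), (non-loop, loop, non-loop), (loop, loop, non-loop) and (loop, non-loop, loop). By contrast, the opposite relation $a'_{i+1}a'_ib'_i-\alpha'a'_{i+1}b'_{i+1}a'_i-\beta'b'_{i+2}a'_{i+1}a'_i$ contains a (non-loop, non-loop, loop) path with coefficient $1$.
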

\begin{proof}
Suppose for contradiction that there exists $\alpha', \beta' \in \kk$ and a $\kk$-algebra isomorphism $\phi:\cB(\alpha', \beta')^\op \to \cB(\alpha, 0)$. If $\beta' \neq 0$, then $\cB(\alpha', \beta')^\op$ is noetherian, while $\cB(\alpha, 0)$ is not. See Theorem~\ref{thm.typeB} below. Thus $\beta' = 0$.

Let $e_i', a_i', b_i'$ be the generators of $\cB(\alpha', \beta')^\op$ (using the opposite relations), and $e_i, a_i, b_i$ be the generators of $\cB(\alpha, 0)$ for $i=0, \dots, n-1$.
By \cite[Theorem~8, Lemma~4]{Gpath}, we may assume that $\phi$ is graded and $\phi(e_i') = e_{-i+1}$ for all $i$. Up to nonzero constant multiples, the only degree 1 elements with the same source and target are the $b_i, b_i'$. Thus,
\[
\phi(b_i') = \phi(e_i')\phi(b_i')\phi(e_i') = e_{-i+1} \lambda_i b_{-i+1} e_{-i+1} = \lambda_i b_{-i+1}
\]
with $\lambda_i \in \kk^\times$. 

Similarly, the only degree 1 elements with source and target differing by 1 index are the $a_i, a_i'$, up to nonzero constant multiples. So, there are $\delta_i \in \kk^\times$ such that
\[
\phi(a_i') = \phi(e_{i+1}')\phi(a_i')\phi(e_i') = e_{-i} \phi(a_i') e_{-i+1} =  \delta_i a_{-i}.
\]

The relations of Definition~\ref{defn.qDU_B} in the opposite ring $\cB(\alpha',0)^\op$ are
\begin{align*}
	a_{i+1}'a_i'b_i' &= \alpha' a_{i+1}'b_{i+1}'a_i'  \\
    a_i'{b'}_i^2 &= \alpha' b_{i+1}'a_i'b_i',
\end{align*}
which mapped to $\cB(\alpha, 0)$ are
\begin{align*}
(\delta_{i+1}\delta_i \lambda_i)	a_{-i-1}a_{-i}b_{-i+1} &= (\delta_{i+1}\lambda_{i+1}\delta_i) \alpha' a_{-i-1}b_{-i}a_{-i}  \\
    (\delta_i \lambda_i^2) a_{-i}{b}_{-i+1}^2 &= (\lambda_{i+1}\delta_i \lambda_i) \alpha' b_{-i}a_{-i}b_{-i+1}.
\end{align*}
Either equation contradicts the linear independence of the basis elements in Lemma~\ref{lem.Bbasis}, so $\phi$ cannot exist.
\end{proof}

\begin{lemma}\label{lem.Bpot}
Fix $n \geq 2$ and let $\alpha,\beta \in \kk$ with $\beta \neq 0$. Define $\eta \in \Aut_{\gr}(\kk\tldQ)$ by 
\[ 
\eta(e_i) = e_{i-2}, \quad
\eta(a_i)=\beta a_{i-2}, \quad
\eta(b_i)=\beta\inv b_{i-2}.
\]
Let $m$ be the number of orbits of $\eta$ on $\tldQ_0$.
Then $\cB_n(\alpha,\beta)$ is the derivation-quotient on $\kk\tldQ$ with $\eta$-twisted superpotential
\[ \Omega = \sum_{i=0}^{m-1} [b_i^2a_ia_{i+1}] - \alpha[b_ia_ib_{i+1}a_{i+1}].\]
\end{lemma}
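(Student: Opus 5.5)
We will verify directly that $\Omega$ is an $\eta$-twisted superpotential and that its cyclic derivatives span exactly the defining relations of $\cB_n(\alpha,\beta)$. The degree is $d=4$, so the twisting map sends a path $\alpha_1\alpha_2\alpha_3\alpha_4$ to $-\eta(\alpha_4)\alpha_1\alpha_2\alpha_3$. We begin by expanding the compact forms. Applying the map repeatedly to $b_i^2a_ia_{i+1}$ gives
\[ b_i^2a_ia_{i+1} \mapsto -\beta a_{i-1}b_i^2a_i \mapsto \beta^2 a_{i-2}a_{i-1}b_i^2 \mapsto -\beta b_{i-2}a_{i-2}a_{i-1}b_i \mapsto b_{i-2}^2a_{i-2}a_{i-1}, \]
using $\eta(a_j)=\beta a_{j-2}$ and $\eta(b_j)=\beta\inv b_{j-2}$. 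After four steps we therefore return to the same shape, with the index shifted by $-2$ and the scalar equal to $1$.

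Hence $[b_i^2a_ia_{i+1}]$ consists of the four signed twists above, summed over the $\eta$-orbit of $i$. Summing over the representatives $i=0,\dots,m-1$ of the orbits of $\eta$ on $\tldQ_0$ (orbits of $i\mapsto i-2$) then covers every vertex exactly once. The same computation for $b_ia_ib_{i+1}a_{i+1}$ gives
\[ b_ia_ib_{i+1}a_{i+1}\mapsto -a_{i-1}b_ia_ib_{i+1} \mapsto b_{i-1}a_{i-1}b_ia_i \mapsto\cdots, \]
with scalars $\beta\cdot\beta\inv=1$. Each two steps shift the index by $-1$, so this class also returns to its own shape. Invariance of $\Omega$ under the twist then holds by construction, since the compact form is exactly the full orbit sum and the scalar is trivial after a full cycle. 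The main thing to check here is that the orbit closes, i.e.\ no scalar obstruction appears, and the $\beta$, $\beta\inv$ pairing in $\eta$ is designed so that it does not.

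We next compute the left derivatives $\partial_{b_j}\Omega$ and $\partial_{a_j}\Omega$. The terms of $\Omega$ beginning with $b_j$ are $b_j^2a_ja_{j+1}$, the term $-\beta b_ja_ja_{j+1}b_{j+2}$ (reindexing $i-2=j$), and, from the $\alpha$-part, $-\alpha(b_ja_jb_{j+1}a_{j+1})$ together with whatever twist of $-\alpha b_{j}a_{j}b_{j+1}a_{j+1}$ from the shifted summand appears. Collecting these, we expect
\[ \partial_{b_j}\Omega = b_ja_ja_{j+1} - \alpha a_jb_{j+1}a_{j+1} - \beta a_ja_{j+1}b_{j+2}, \]
which is the first relation. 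Similarly, the terms beginning with $a_j$ should yield $\partial_{a_j}\Omega$ as a scalar multiple (a power of $\pm\beta$) of $b_{j+1}^2a_{j+1} - \alpha b_{j+1}a_{j+1}b_{j+2} - \beta a_{j+1}b_{j+2}^2$, after reindexing, i.e.\ the second relation. Since $\beta\ne0$ these scalars are units, so the ideals coincide.

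The main obstacle is bookkeeping: tracking the signs and powers of $\beta$ through the cyclic twists so that the coefficients match exactly $\alpha$ and $\beta$. This includes correctly handling the case where the orbits of $\eta$ have length $n$ or $n/2$ (depending on the parity of $n$), so that no term is double counted. For small $n$, for instance $n=2$, where $\eta$ fixes vertices, we will check separately that the compact form still yields each monomial once.
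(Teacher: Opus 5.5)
Your route is the same as the paper's: expand each compact form under $\alpha_1\alpha_2\alpha_3\alpha_4\mapsto-\eta(\alpha_4)\alpha_1\alpha_2\alpha_3$, check that the orbit closes with scalar $1$, and read off the left derivatives. Your expansion of $[b_i^2a_ia_{i+1}]$ is correct. However, the step you leave as ``whatever twist \dots\ from the shifted summand appears'' is exactly where the argument fails for even $n$.

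You observed that two twists of $b_ia_ib_{i+1}a_{i+1}$ shift the index by $-1$. Hence its orbit runs through \emph{all} $n$ indices for every $n$, and
\[ [b_ia_ib_{i+1}a_{i+1}]=\sum_{k\in\ZZ/n}\bigl(b_ka_kb_{k+1}a_{k+1}-\beta a_{k-1}b_ka_kb_{k+1}\bigr) \]
is independent of $i$. When $n$ is even, $m=2$. If the sum over $i=0,\dots,m-1$ is applied to both compact forms as displayed, $\Omega$ contains this sum twice. The ``shifted summand'' then contributes a second copy of $-\alpha b_ja_jb_{j+1}a_{j+1}$, and
\[ \partial_{b_j}\Omega=b_ja_ja_{j+1}-2\alpha a_jb_{j+1}a_{j+1}-\beta a_ja_{j+1}b_{j+2}. \]
So the derivatives of the displayed $\Omega$ are the relations of $\cB_n(2\alpha,\beta)$, not $\cB_n(\alpha,\beta)$. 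Your formula for $\partial_{b_j}\Omega$ is therefore false whenever $n$ is even and $\alpha\neq0$, already for $n=2$. The danger is not a degenerate orbit for small $n$, as you suggest; each compact form consists of distinct paths even when $n=2$. It is the mismatch between the $m$ twist-orbits of the $b^2aa$-terms and the single twist-orbit of the $\alpha$-terms. The paper's proof passes over the same point (``similar but there are two orbits''). The fix is to take the $\alpha$-term once:
\[ \Omega=\sum_{i=0}^{m-1}[b_i^2a_ia_{i+1}]-\alpha[b_0a_0b_1a_1], \]
which agrees with the displayed formula for $n$ odd.

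Second, your first twist of $b_ia_ib_{i+1}a_{i+1}$ drops a factor. Since $\eta(a_{i+1})=\beta a_{i-1}$, the twist is $-\beta a_{i-1}b_ia_ib_{i+1}$. With your coefficient $-1$, the next step would carry $\beta\inv$, contradicting your own claim that the scalars multiply to $1$. This $\beta$ is what makes the $a$-derivatives work, so compute them rather than asserting proportionality. The paths of the corrected $\Omega$ beginning with $a_j$ are $-\beta a_jb_{j+1}^2a_{j+1}$, $\beta^2a_ja_{j+1}b_{j+2}^2$ and $\alpha\beta a_jb_{j+1}a_{j+1}b_{j+2}$, so
\[ \partial_{a_j}\Omega=-\beta\bigl(b_{j+1}^2a_{j+1}-\alpha b_{j+1}a_{j+1}b_{j+2}-\beta a_{j+1}b_{j+2}^2\bigr). \]
With your coefficient the middle term would carry $\alpha\beta\inv$ in place of $\alpha$, and the result would not in general be a multiple of a defining relation. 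With both corrections, each relation of $\cB_n(\alpha,\beta)$ occurs exactly once among the derivatives, up to the unit $1$ or $-\beta$. The argument then goes through for all $n\geq 2$.
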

\begin{proof}
Observe that
\[ 
[b_0^2a_0a_1]
    = b_0^2a_0a_1 - \beta a_{n-1}b_0^2a_0 + \beta^2 a_{n-2}a_{n-1}b_0^2 - \beta b_{n-2}a_{n-2}a_{n-1}b_0 + b_{n-2}^2a_{n-2}a_{n-1} + \cdots
\]
If $n$ is odd, then it is clear that
\[
[b_0^2a_0a_1] = \sum_{i=0}^{n-1} b_i^2a_ia_{i+1} - \beta a_{i-1}b_i^2a_i + \beta^2 a_{i-2}a_{i-1}b_i^2 - \beta b_{i-2}a_{i-2}a_{i-1}b_i.
\]
Similarly,
\[
[b_0a_0b_1a_1] = \sum_{i=0}^{n-1} b_ia_ib_{i+1}a_{i+1}
    - \beta a_{i-1}b_ia_ib_{i+1}.
\]
The result for $n$ odd is now clear. The result for $n$ even is similar but there are two orbits.
\end{proof}

\begin{lemma}\label{lem.Bnnoeth}
Fix $n \geq 2$ and let $\alpha \in \kk$. Then $\cB=\cB_n(\alpha,0)$ is not a piecewise domain and not noetherian.
\end{lemma}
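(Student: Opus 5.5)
With $\beta=0$ the defining relations of $\cB=\cB_n(\alpha,0)$ read
\[ b_ia_ia_{i+1} = \alpha a_ib_{i+1}a_{i+1}, \qquad b_i^2a_i = \alpha b_ia_ib_{i+1}. \]
Both factor as products of two nonzero elements, and both failures come from this.

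\textbf{Not a piecewise domain.} From the second relation,
\[ b_i(b_ia_i - \alpha a_ib_{i+1}) = 0. \]
Here $b_i \in e_i\cB e_i$ and $b_ia_i-\alpha a_ib_{i+1} \in e_i\cB e_{i+1}$. Both are nonzero: $b_i$, $b_ia_i = x_i$ and $a_ib_{i+1}$ are distinct elements of the basis \eqref{eq.BbasisWithXi} of Lemma \ref{lem.Bbasis}, which remains valid at $\beta=0$ because the ambiguity computation there yields a multiple of $\beta$ and so vanishes identically. Hence $\cB$ is not a piecewise domain.

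\textbf{Not right noetherian.} I would mimic \cite[Lemma 4.1]{GK1} and build a strictly ascending chain of right ideals. The natural candidates are elements with leading $a$'s followed by $b$'s, such as $f_k = a_ia_{i+1}\cdots a_{i+k-1}b_{i+k}$, or variants using the $x$'s. Set $I_N = \sum_{k\le N} f_k\cB$. To show $f_{N+1}\notin I_N$, I would use the Gröbner basis from Lemma \ref{lem.Bbasis}. When $\beta=0$, the reduction rules $b_ia_ia_{i+1}\to \alpha a_ib_{i+1}a_{i+1}$ and $b_i^2a_i\to\alpha b_ia_ib_{i+1}$ never create a word of the form $a\cdots a\, b\, b$ or $a\cdots a\,b$ ending the way $f_{N+1}$ does. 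So a degree and normal-form argument should show that every element of $f_k\cB$ ($k\le N$) in the relevant degree and vertex component is a combination of basis words not containing the normal word $f_{N+1}$. If this chain does not work, I would try the one-sided annihilator approach instead: show that the left annihilators of the elements $b_ia_i-\alpha a_ib_{i+1}$ and their translates produce an infinite direct sum of right ideals.

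\textbf{Not left noetherian.} Proposition \ref{prop.Bop} needs $\beta\neq0$, so I cannot reduce to the right-handed case. I would run the symmetric argument with a chain of left ideals, using the first relation in the form $(b_ia_i-\alpha a_ib_{i+1})a_{i+1}=0$ to get the analogous degenerate structure on the other side.

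\textbf{Main obstacle.} The hardest step is the non-membership claim $f_{N+1}\notin I_N$, that is, choosing the chain correctly and controlling normal forms. I would first settle the case $\alpha=0$, where the relations are monomial and the verification is combinatorial, and then treat general $\alpha$ using the same leading-term argument.
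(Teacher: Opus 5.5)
Your piecewise-domain argument is the paper's, and your remark that the basis of Lemma \ref{lem.Bbasis} survives at $\beta=0$ is correct. Your chain $f_k=a_i\cdots a_{i+k-1}b_{i+k}$ differs from the paper's, and it does work. The paper uses $I_s=\sum_{m\le s}U^m(\alpha a_0b_1-x_0)\cB$ with $U=a_0\cdots a_{n-1}$, relying on the fact that every $a_i$ kills $\alpha a_0b_1-x_0$ on the right.

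However, the step you flag yourself, $f_{N+1}\notin I_N$, is only asserted, so the proof is incomplete. The heuristic you give for it is also not the right invariant. For example, $f_kb_{i+k}^m=a^kb^{m+1}$ is a normal word of the shape $a\cdots a\,b\,b$. Moreover, the rule $baa\to\alpha aba$ does increase the number of leading $a$'s.

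The missing computation is as follows, with indices suppressed. Take a basis word $w=a^{k'}x^{\ell'}b^{m'}$ starting at vertex $j$. Repeated use of the two relations gives
\[ b_j\,a^{k'}x^{\ell'}b^{m'}=\alpha^{k'-1}a^{k'-1}x^{\ell'+1}b^{m'}\quad(k'\ge 1),\qquad b_j\,x^{\ell'}b^{m'}=\alpha^{\ell'}x^{\ell'}b^{m'+1}. \]
Hence $f_k\cB$ is spanned by basis words that either contain some $x$ or equal $a^kb^m$ with exactly $k$ leading $a$'s. Since $f_{N+1}=a^{N+1}b$ is neither, you get $I_N\subsetneq I_{N+1}$ in $e_i\cB$.

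Your left-handed step is also unspecified; the phrase ``run the symmetric argument'' names no chain. A chain that works is $J_N=\sum_{k\le N}\cB a_0b_1^k\subseteq\cB e_1$. It uses $b_0^{m}a_0=\alpha^{m-1}x_0b_1^{m-1}$ and $x_{j-1}a_j=\alpha a_{j-1}x_j$. For a basis word $w=a^{k'}x^{\ell'}b^{m'}$ ending at $e_0$, one gets $wa_0b_1^k=\alpha^{m'-1}a^{k'}x^{\ell'+1}b^{m'+k-1}$ if $m'\ge1$, and $wa_0b_1^k=\alpha^{\ell'}a^{k'+1}x^{\ell'}b^k$ if $m'=0$. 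So $J_N$ is spanned by basis words that contain an $x$ or have the form $a^rb^k$ with $k\le N$, and $a_0b_1^{N+1}$ is neither.

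Compared with the paper, your monomial chains avoid the special element $\alpha a_0b_1-x_0$ and the powers $U^m$. The cost is that you must compute $b_jw$ (resp.\ $wa_0$) for every basis word $w$. In the paper, the annihilated factor cuts the bookkeeping down to words $x\cdots x\,b\cdots b$ (resp.\ $a\cdots a\,x\cdots x$).

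Your chains do show transparently where $\beta=0$ is used. If $\beta\neq0$, then for $N\ge2$ one has $f_{N+1}=\beta\inv\bigl(f_{N-1}a_{i+N-1}a_{i+N}-\alpha f_Na_{i+N}\bigr)\in I_N$, so the chain stabilizes.
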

\begin{proof}
Since $\beta=0$, then the relations in Definition \ref{defn.qDU_B} are
\begin{align*}
0 &= b_ia_ia_{i+1}-\alpha a_ib_{i+1}a_{i+1} = (b_ia_i-\alpha a_ib_{i+1})a_{i+1} \\
0 &= b_i^2 a_i - \alpha b_ia_ib_{i+1} = b_i(b_i a_i - \alpha a_ib_{i+1})
\end{align*}
Thus, $\cB$ is not a piecewise domain. To show that $\cB$ is not right noetherian, we follow the proofs of \cite[Lemma 4.3]{KMP} and \cite[Lemma~4.1]{GK1}.

Since $\cB = \oplus_{j=0}^{n-1} e_j \cB$, it suffices to show that $e_0 \cB$ is not noetherian as a right $\cB$-module. By Lemma~\ref{lem.Bbasis}, $e_0 \cB$ has a $\kk$-basis of the form \eqref{eq.BbasisWithXi} with $i=0$.

Since $\beta = 0$, the relations in Definition~\ref{defn.qDU_B} yield
\begin{equation}\label{eq.ABkill}
(\alpha a_i b_{i+1} - x_{i})a_{i+1} 
    = b_i(\alpha a_i b_{i+1}- x_{i}) = 0.
\end{equation}
Note that if $a_k \neq a_{i+1}$, then $(\alpha a_i b_{i+1} - x_{i})a_k = 0$ because of the source and target mismatch. Then
\begin{equation}\label{eq.UABkill}
(\alpha a_0 b_1 - x_0)a_i = 0 \text{ for all }i \in \ZZ.
\end{equation}

Set $U= a_0 a_1 \cdots a_{n-1}$. For each $s \geq 1$, define
\[ I_s = \sum_{m=1}^s U^m(\alpha a_0 b_1 - x_0)\cB.\]
By \eqref{eq.UABkill}, 
\[ I_s = \sum_{m=1}^s \sum_{j,k \geq 0} \kk U^m (\alpha a_0 b_1 - x_0) x_1 x_2 \cdots x_j b_{j+1}^k,\]
with the convention that when $j=0$, $x_1 x_2 \cdots x_j = 1$.

By \eqref{eq.ABkill}, $b_i x_{i} = b_i (\alpha a_i b_{i+1}) = x_i(\alpha b_{i+1})$ and hence 
\[
b_1 (x_1 x_2 \cdots x_{j}) = \alpha^j (x_1 x_2 \cdots x_{j}) b_{j+1}
\] for any $j \geq 0$.
Thus $I_s$ is contained in the $\kk$-span of the basis monomials
\[
U^m x_0 x_1 x_2 \cdots x_{j} b_{j+1}^k, \qquad 
U^m a_0 x_1 x_2 \cdots x_{j} b_{j+1}^{k+1} 
\]
with $k, j \geq 0$ and $1 \leq m \leq s$. Thus no polynomial in $I_s$ has $U^{s+1} x_{0}$ in its support, while $I_{s+1}$ does contain the polynomial $U^{s+1}(\alpha a_0 b_1 - x_{0})$.

Thus $I_s \subsetneq I_{s+1}$ for all $s \geq 1$ and so $e_0 \cB$ is not noetherian. So $\cB$ is not right noetherian. 

A similar proof shows that $\cB e_1$ is not a noetherian left $\cB$-module. For each $s \geq 1$, set
\[ J_s = \sum_{m=1}^s \cB(\alpha a_0 b_1 -x_0)b_1^m.\]
By \eqref{eq.ABkill} and source-target mismatch, we have $b_i(\alpha a_0 b_1 - x_0) = 0$ for all $i$. So
\[ J_s = \sum_{m=1}^s \sum_{j,k \geq 0} \kk a_{-j-k} \cdots a_{-j-1}x_{-j} \cdots x_{-1}(\alpha a_0 b_1 - x_0)b_1^m,\]
with the conventions that when $j=0$, $x_{-j} \cdots x_{-1} = 1$ and when $k=0$, $a_{-j-k} \cdots a_{-j-1} = 1$.

By \eqref{eq.ABkill}, $x_i a_{i+1} = (\alpha a_i) x_{i+1}$. Thus
\[
(x_{-j} \cdots x_{-1})a_0 = \alpha^j a_{-j} (x_{-j+1} \cdots x_{0}).
\]
So $J_s$ is contained in the $\kk$-span of
\[
a_{-j-k} \cdots a_{-j-1}x_{-j} \cdots x_{-1} x_0 b_1^m, \qquad a_{-j-k} \cdots a_{-j}x_{-j+1} \cdots x_{-1} x_0 b_1^{m+1}
\]
with $k, j \geq 0$ and $1 \leq m \leq s$.
Note that when $k=j=0$, these simplify to $x_0 b_1^m$ and $a_0 b_1^{m+1}$. So $x_0 b_1^{s+1}$ is in the support of a polynomial in $J_{s+1}$, but not in $J_s$. Thus $\cB e_1$ is not noetherian and $\cB$ is not left noetherian.
\end{proof}

Fix $p,q \in \kk^\times$. Let $B=B_n(q)$ and let $\phi$ be the automorphism of $\kk\tldQ$ defined by 
\[
\phi(e_i) = e_{i-1}, \quad
\phi(a_i) = p a_{i-1}, \quad
\phi(b_i) = p\inv b_{i-1}.
\]
Let $\cD=\cD(\tldQ,\omega,\phi)$ with
\[ \omega = \sum_{i \in Q_0} b_ia_i - q a_i b_{i+1}.\]
Then the relations in $\cD$ are
\begin{align*}
\omega a_i - \phi(a_i)\omega
	&= b_{i-1}a_{i-1}a_i - (q+p) a_{i-1}b_ia_i + pq a_{i-1} a_i b_{i+1} \\
\omega b_i - \phi(b_i)\omega
	%&= b_{i-1}a_{i-1}b_i - q a_{i-1}b_i^2 - p\inv b_{i-1}^2a_{i-1}  + p\inv q b_{i-1}a_{i-1}b_i
	&= -p\inv(b_{i-1}^2a_{i-1} - (q+p)b_{i-1}a_{i-1}b_i + pq a_{i-1}b_i^2).
\end{align*}
These are precisely the relations given in Definition \ref{defn.qDU_B} with $\alpha=q+p$ and $\beta=-qp$. That is, $\omega$ is a normal element in $\cB$ and $B=\cB/(\omega)$. Given $\alpha,\beta \in \kk$, the $p,q$ that give $\alpha,\beta$ in the above construction are precisely the roots of the polynomial $t^2-\alpha t - \beta$.

\begin{theorem}\label{thm.typeB}
Fix $n \geq 2$ and let $\alpha,\beta \in \kk$. Let $\cB=\cB_n(\alpha,\beta)$. The following are equivalent:
\begin{enumerate}
\item $\beta \neq 0$,
\item \label{Bpwd} $\cB$ is a piecewise domain, and
\item \label{Bnoeth} $\cB$ is right (or left) noetherian.
\end{enumerate}
Moreover, if any of the above conditions hold, then $\cB$ is twisted graded Calabi--Yau of dimension three. 
\end{theorem}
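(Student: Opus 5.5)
The plan mirrors the proof of Theorem \ref{thm.typeA}. First assume $\beta \neq 0$. Since $\kk$ is algebraically closed, choose $p,q$ to be the two roots of $t^2-\alpha t-\beta$, so that $\alpha=p+q$ and $\beta=-pq$. Because $\beta\neq 0$, both roots are nonzero, so $p,q\in\kk^\times$.

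With this choice, the computation preceding the theorem shows that $\omega=\sum_i b_ia_i-qa_ib_{i+1}$ is normal in $\cB=\cB_n(\alpha,\beta)$. It also shows that $\cB/(\omega)\iso B_n(q)$, which is twisted graded Calabi--Yau of dimension two. Compare Hilbert series: by Lemma \ref{lem.Bbasis}, $h_{\cB}^{\tot}(t)=(1-t)^{-2}(1-t^2)\inv$. From the discussion after Definition \ref{defn.typeB}, $h_{B_n(q)}^{\tot}=n(1-t)^{-2}$.

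Before invoking Lemma \ref{lem.hilb}\eqref{hilb1}, I need to check the normalization of total Hilbert series. Either both sides count all vertices, giving the factor $n$, or I compare the matrix-valued series $h_{\cB}=h_{B}(I-P_\omega\inv t^2)\inv$ directly. The basis \eqref{eq.BbasisWithXi} is indexed by the source vertex $i$, so the total series of $\cB$ should carry the factor $n$, matching that of $B$. With this settled, Lemma \ref{lem.hilb}\eqref{hilb1} gives that $\omega$ is regular, since $\deg\omega=2$.

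The remaining consequences of $\beta\neq 0$ then follow from earlier results:
\begin{itemize}
\item $\cB$ is a piecewise domain by Lemma \ref{lem.PWD2}, because $B_n(q)$ is a piecewise domain by \cite[Theorem 1.4]{RR3}: it is twisted Calabi--Yau of dimension two and every vertex of $\tldQ$ has outdegree two.
\item $\cB$ is twisted graded Calabi--Yau of dimension three by Proposition \ref{prop.lift}, or by Theorem \ref{thm.lift}.
\item $\cB$ is noetherian on both sides by Lemma \ref{lem.hilb}\eqref{hilb3}, since $B_n(q)$ is noetherian (it has GK dimension $2$, and \cite[Theorem 6.6]{RR2} applies). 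Equivalently, use Proposition \ref{prop.norm_ext}.
\end{itemize}

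For the converse, suppose $\beta=0$. Then Lemma \ref{lem.Bnnoeth} shows directly that $\cB$ is neither a piecewise domain nor right or left noetherian. Thus (2) implies (1) and (3) implies (1), which closes the cycle of equivalences.

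The main point needing care is the regularity of $\omega$. This depends on the Gröbner basis computation in Lemma \ref{lem.Bbasis} and on a correct matching of Hilbert series between $\cB$ and $B_n(q)$, including the factor $n$ and the vertex permutation. If the total-series comparison is ambiguous, I would instead check regularity directly. Left multiplication by $\omega$ sends the basis \eqref{eq.BbasisWithXi} injectively, after leading-term analysis, since $\omega$ has leading term $b_ia_i=x_i$. Right multiplication is handled symmetrically, via Proposition \ref{prop.Bop}.
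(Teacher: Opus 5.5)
Your proposal is correct and follows essentially the same route as the paper: realize $\cB$ as the normal extension $\cD(\tldQ,\omega,\phi)$ of $B_n(q)$, with $p,q$ the roots of $t^2-\alpha t-\beta$, and get regularity of $\omega$ from the Hilbert series comparison in Lemma \ref{lem.hilb}. The piecewise domain, noetherian and twisted Calabi--Yau properties then come from Lemma \ref{lem.PWD2}, Lemma \ref{lem.hilb} (or Proposition \ref{prop.norm_ext}) and Proposition \ref{prop.lift}, with Lemma \ref{lem.Bnnoeth} giving the converse.

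Your care about the factor $n$ is justified: Lemma \ref{lem.Bbasis} as stated omits it, and the paper's proof writes $h_{\cB}^{\tot}=(1-t^2)h_B^{\tot}$ where $(1-t^2)^{-1}h_B^{\tot}$ is meant. Your fallback direct-regularity sketch is not needed, and it would require more than a leading-term observation, since $x_i m$ need not be a reduced word.
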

\begin{proof}
Suppose $\beta \neq 0$. Set $B=\cB/(\omega)$. By Lemma \ref{lem.Bbasis}, $h_{\cB}^{\tot}=(1-t^2)h_B^{\tot}$ and so, by Lemma \ref{lem.hilb}, $\omega$ is regular. Now \eqref{Bpwd} follows from Lemma \ref{lem.PWD2} and \eqref{Bnoeth} from Proposition \ref{prop.norm_ext}. The converse now follows from Lemma \ref{lem.Bnnoeth}. Finally, given the equivalent conditions, $\cB$ is twisted graded Calabi--Yau of dimension three by Proposition \ref{prop.lift}.
\end{proof}

\begin{corollary}
Fix $\alpha \in \kk$ and $\beta \in \kk^\times$. Let $\cB=\cB_n(\alpha,\beta)$. The Nakayama automorphism $\mu^{\cB}$ is given by 
\[ \mu^{\cB}(e_i)=e_{i+2}, \quad \mu^{\cB}(a_i)=\beta\inv a_{i+2}, \quad \mu^{\cB}(b_i)=\beta b_{i+2}.\]
The matrix-valued Hilbert series of $\cB$ is $h_{\cB}(t)=(I-Mt+P^Bt^2)\inv(I-P^Bt^2)\inv$ where $P^B$ is defined as in \eqref{eq.PtypeB}.
\end{corollary}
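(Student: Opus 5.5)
Two computations are needed: the Nakayama automorphism, which I will read off from the twisted superpotential, and the matrix-valued Hilbert series, which I will obtain from the normal extension via Lemma \ref{lem.hilb}.

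\emph{Nakayama automorphism.} By Theorem \ref{thm.typeB}, $\cB$ is twisted graded Calabi--Yau of dimension three. By Lemma \ref{lem.Bpot}, it is the derivation-quotient algebra of the $\eta$-twisted superpotential $\Omega$ of degree $d=4$, where $\eta(e_i)=e_{i-2}$, $\eta(a_i)=\beta a_{i-2}$ and $\eta(b_i)=\beta\inv b_{i-2}$. The cited result \cite[Theorem 6.8]{BSW} (with the paper's path convention) gives $\mu^{\cB}=(-1)^{d+1}\eta\inv=-\eta\inv$. Inverting $\eta$ gives $\eta\inv(e_i)=e_{i+2}$, $\eta\inv(a_i)=\beta\inv a_{i+2}$ and $\eta\inv(b_i)=\beta b_{i+2}$. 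The sign $-1$ is harmless: it is a graded automorphism acting by $(-1)^{\deg}$, and on a derivation-quotient algebra it can be absorbed. Concretely, $\eta$ and $-\eta$ both twist $\Omega$ correctly, because the superpotential sign convention $(-1)^{d+1}$ is already built into the definition.

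I would double-check this by cross-checking with Theorem \ref{thm.lift}, which gives $\mu^B=\mu^{\cB}\circ\phi_\omega$. Here $\phi_\omega=\phi$, with $\phi(e_i)=e_{i-1}$, $\phi(a_i)=pa_{i-1}$ and $\phi(b_i)=p\inv b_{i-1}$. Also $\mu^B$ satisfies $\mu^B_0(e_i)=e_{i+1}$ by \cite[Lemma 2.3]{GZiso}, and its scalars are computable from $\omega$. This check should confirm $\mu^{\cB}_0(e_i)=e_{i+2}$, matching Proposition \ref{prop.norm_ext}, and should pin down the scalars. I expect the main obstacle to be fixing the scalar and sign conventions, namely the inverse, the $(-1)^{d+1}$, and the identification of $\mu$ only up to inner automorphisms. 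I would settle it by this cross-check, using $\beta=-pq$. Once $\mu^B$ is known explicitly on $a_i,b_i$, with scalars involving $q$, the scalars $\beta^{\mp1}$ should drop out.

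\emph{Hilbert series.} By the proof of Theorem \ref{thm.typeB}, $\omega$ is regular of degree $m=2$. Lemma \ref{lem.hilb}\eqref{hilb1} then gives $h_{\cB}(t)=h_B(t)(I-P_\omega\inv t^2)\inv$. From the remark after Proposition \ref{prop.norm_ext}, $P_\omega\inv=P$, where $P$ is the permutation matrix of $\mu_0^B$, namely $P^B$. Finally, $h_B(t)=(I-Mt+P^Bt^2)\inv$ as computed in Section \ref{sec.background}. This yields the stated formula, and it is consistent with the total Hilbert series in Lemma \ref{lem.Bbasis}.
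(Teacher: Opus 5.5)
The Hilbert-series half is fine. It is exactly what the paper's framework gives: Lemma \ref{lem.hilb}\eqref{hilb1} with $P_\omega\inv=P^B$ and $h_B=(I-Mt+P^Bt^2)\inv$. The gap is in the Nakayama half. You correctly obtain $\mu^{\cB}=(-1)^{4+1}\eta\inv=-\eta\inv$, that is, $\mu^{\cB}(a_i)=-\beta\inv a_{i+2}$ and $\mu^{\cB}(b_i)=-\beta b_{i+2}$, and then you discard the sign. Both justifications you give for this are false.

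\emph{$\Omega$ is not $(-\eta)$-twisted.} Replacing $\eta$ by $-\eta$ multiplies the twisting map on paths of length four by $-1$. So that map sends $\Omega$ to $-\Omega\neq\Omega$.

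\emph{$(-1)^{\deg}$ is not inner, so it cannot be absorbed into the ambiguity of $\mu$.} Suppose $ub_iu\inv=-b_i$ for a unit $u$. Compare degree-one components of $ub_i=-b_iu$ and write $u_0=\sum_j c_je_j$. This gives $c_ib_i=-c_ib_i$, so $c_i=0$. That contradicts $u_0$ being a unit of $\cB_0$. The same computation shows that the scalar by which a graded Nakayama automorphism multiplies the loop $b_i$ does not change under composition with inner automorphisms. So $\beta$ versus $-\beta$ on $b_i$ is a genuine difference.

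\emph{The cross-check confirms the sign.} The element $\sum_i a_i$ is normal and regular in $B=B_n(q)$, and $B/(\sum_i a_i)\iso\prod_i\kk[b_i]$. This quotient is commutative, so its Nakayama automorphism is the identity. Theorem \ref{thm.lift} then forces $\mu^B(b_i)=qb_{i+1}$, and likewise $\mu^B(a_i)=q\inv a_{i+1}$. From $\mu^{\cB}=\mu^B\circ\phi\inv$ you get $\mu^{\cB}(b_i)=pq\,b_{i+2}=-\beta b_{i+2}$ and $\mu^{\cB}(a_i)=(pq)\inv a_{i+2}=-\beta\inv a_{i+2}$. This agrees with $-\eta\inv$, and the $b_i$-scalar alone settles the question.

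\emph{A sanity check.} On one vertex with $\alpha=2$ and $\beta=-1$, the same relations define the enveloping algebra of the Heisenberg Lie algebra, which is Calabi--Yau. There $-\eta\inv=\mathrm{id}$, whereas $\eta\inv=(-1)^{\deg}$ is not inner.

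\emph{What can actually be proved.} No argument yields the formula exactly as printed. Your method, and the paper's own rule $\mu=(-1)^{d+1}\eta\inv$ (which the paper applies with the sign in its type A example), prove $\mu^{\cB}(e_i)=e_{i+2}$, $\mu^{\cB}(a_i)=-\beta\inv a_{i+2}$ and $\mu^{\cB}(b_i)=-\beta b_{i+2}$. The displayed scalars in the statement are off by exactly this sign. You should state and prove the signed version rather than try to absorb the sign.
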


\subsection{Type C}
We now consider quiver down-up algebras of type C, as described in Definition~\ref{defn.qDU_C}.

\begin{lemma}\label{lem.Cbasis}
Fix $n \in \ZZ_+$ and let $\alpha,\beta \in \kk$. 
For any $i\in\hatQ_0$, set $x_i = b_i a_{i+1}$.
A $\kk$-basis for $\cC=\cC_n(\alpha,\beta)$ consists of paths of the form:
\begin{equation}\label{eq.CbasisWithXi}
a_ia_{i+1}a_{i+2}\cdots a_{i+(k-1)}
x_{i+k}x_{i+k+2} \cdots x_{i+k+(\ell-2)}
b_{i+k+\ell}b_{i+k+\ell+1} \cdots b_{i+k+\ell+(m-1)}
\end{equation}
with $i\in\hatQ_0$ and $\ell,m,k\geq0$. Consequently, $h_{\cC}^{\tot}(t) = (1-t)^{-2}(1-t^2)\inv$.
\end{lemma}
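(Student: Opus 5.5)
We will follow the proof of Lemma \ref{lem.Bbasis} and use Bergman's Diamond Lemma. Order the arrows by $b_0 > b_1 > \cdots > b_{n-1} > a_0 > \cdots > a_{n-1}$ and use deglex on paths. The two relations of Definition \ref{defn.qDU_C} then have leading terms $b_ia_{i+1}a_{i+2}$ and $b_ib_{i+1}a_{i+2}$. The other terms on the right-hand side each begin with $a_i$ in the first position, so they are smaller. Hence the reduction system is
\[ b_ia_{i+1}a_{i+2} \mapsto \alpha a_ib_{i+1}a_{i+2} + \beta a_ia_{i+1}b_{i+2}, \qquad b_ib_{i+1}a_{i+2} \mapsto \alpha b_ia_{i+1}b_{i+2} + \beta a_ib_{i+1}b_{i+2}. \]

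Next we list the overlap ambiguities. A suffix of one leading word must equal a prefix of another. The suffixes of length two are $a_{i+1}a_{i+2}$ and $b_{i+1}a_{i+2}$; the first begins with $a$, so only $b_{i+1}a_{i+2}$ gives an overlap. Thus the ambiguities are $b_ib_{i+1}a_{i+2}a_{i+3}$ and $b_ib_{i+1}b_{i+2}a_{i+3}$. Length-one suffixes $a_{i+2}$ yield nothing.

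For each ambiguity we reduce both ways and check agreement, as in the type B computation. For $b_ib_{i+1}a_{i+2}a_{i+3}$, reducing on the left gives $(\alpha b_ia_{i+1}b_{i+2} + \beta a_ib_{i+1}b_{i+2})a_{i+3}$. Reducing on the right gives $b_i(\alpha a_{i+1}b_{i+2}a_{i+3} + \beta a_{i+1}a_{i+2}b_{i+3})$. We then continue reducing until every subword $b a a$ or $b b a$ is gone. The $\alpha^2$ terms cancel, and we expect the remaining terms to match after expansion. The second ambiguity is handled the same way, or by the symmetry that swaps $a$ and $b$ and reverses the roles of the two relations.

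This bookkeeping is the main obstacle. Unlike type B, there is no cheap reduction to a single cancellation: several terms of the form $a_i b_{i+1} a_{i+2} b_{i+3}$ appear with coefficients $\alpha\beta$, $\beta^2$, and so on. Each must be shown to cancel for arbitrary $\alpha,\beta$, and I would organize the check by monomial.

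Granting resolution, the Diamond Lemma says the irreducible paths form a basis. A path avoids every $b_ja_{j+1}a_{j+2}$ and $b_jb_{j+1}a_{j+2}$ exactly when it has the following shape:
\begin{itemize}
\item an initial string of $a$'s;
\item then blocks $x_j=b_ja_{j+1}$, where no $b$ precedes a $b$ that is followed by $a$, and no $b$ is followed by $aa$;
\item then a final string of $b$'s.
\end{itemize}
This is precisely the form \eqref{eq.CbasisWithXi}. Finally, biject the paths from $e_i$ to monomials $x^ky^\ell z^m$ with $\deg x=\deg z=1$ and $\deg y=2$. Summing over the $n$ sources gives $h_{\cC}^{\tot}=n(1-t)^{-2}(1-t^2)^{-1}$, which the statement records up to the factor $n$.
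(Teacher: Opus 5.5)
Your route is the same as the paper's: the same deglex order, the same leading words $b_ia_{i+1}a_{i+2}$ and $b_ib_{i+1}a_{i+2}$, the same normal forms (suppressing indices, $a^k(ba)^\ell b^m$), and the same count by monomials. The gap is that you never resolve the ambiguity. You write ``we expect the remaining terms to match'' and then ``granting resolution.''

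In a Diamond Lemma argument that check is the substance of the proof. The irreducible paths span $\cC$ automatically, and their linear independence is exactly what resolvability gives you. The check is also not a formality. If the parameters were allowed to depend on the vertex, the two reductions below would produce $\alpha_i\, b_ia_{i+1}b_{i+2}a_{i+3}$ on one side and $\alpha_{i+1}\, b_ia_{i+1}b_{i+2}a_{i+3}$ on the other, and resolution would fail.

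Your description of the check is also inaccurate in several places:
\begin{itemize}
\item $b_ib_{i+1}b_{i+2}a_{i+3}$ is not an ambiguity. The prefix $b_ib_{i+1}b_{i+2}$ is not a leading word, so only one reduction applies, and no $a\leftrightarrow b$ symmetry is needed.
\item No $\alpha^2$ terms occur.
\item The computation is no harder than in type B.
\item In the second relation, the term $\alpha\, b_ia_{i+1}b_{i+2}$ begins with $b_i$, not $a_i$. It is smaller than $b_ib_{i+1}a_{i+2}$ because of the second letter.
\end{itemize}

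The only ambiguity, $b_ib_{i+1}a_{i+2}a_{i+3}$, closes after one further reduction on each side:
\begin{align*}
(b_ib_{i+1}a_{i+2})a_{i+3} &\to \alpha\, b_ia_{i+1}b_{i+2}a_{i+3} + \beta\, a_i(b_{i+1}b_{i+2}a_{i+3})\\
&\to \alpha\, b_ia_{i+1}b_{i+2}a_{i+3} + \alpha\beta\, a_ib_{i+1}a_{i+2}b_{i+3} + \beta^2\, a_ia_{i+1}b_{i+2}b_{i+3},\\
b_i(b_{i+1}a_{i+2}a_{i+3}) &\to \alpha\, b_ia_{i+1}b_{i+2}a_{i+3} + \beta\, (b_ia_{i+1}a_{i+2})b_{i+3}\\
&\to \alpha\, b_ia_{i+1}b_{i+2}a_{i+3} + \alpha\beta\, a_ib_{i+1}a_{i+2}b_{i+3} + \beta^2\, a_ia_{i+1}b_{i+2}b_{i+3}.
\end{align*}
None of the final words contains $baa$ or $bba$, so the two sides agree in normal form. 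This is exactly the paper's verification. With it inserted, your argument is complete.

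Your remark about the Hilbert series is correct: summing over the $n$ sources gives $h_{\cC}^{\tot}=n(1-t)^{-2}(1-t^2)\inv$, and the statement drops the factor $n$.
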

\begin{proof}
We order the paths by $b_0 > b_1 > \cdots > b_{n-1} > a_0 > a_1 > \cdots > a_{n-1}$ so that the leading terms of the defining ideal are $b_ia_{i+1}a_{i+2}$ and $b_ib_{i+1}a_{i+2}$ for $i \in \hatQ_0$. Thus, the only overlap ambiguities are of the form $b_ib_{i+1}a_{i+2}a_{i+3}$. We verify below that these ambiguities resolve:
\begin{align*}
(b_ib_{i+1}a_{i+2})a_{i+3} &- b_i(b_{i+1}a_{i+2}a_{i+3}) \\
	&= (\alpha b_ia_{i+1}b_{i+2} + \beta a_ib_{i+1}b_{i+2})a_{i+3}
		- b_i(\alpha a_{i+1}b_{i+2}a_{i+3} + \beta a_{i+1}a_{i+2}b_{i+3}) \\
	&= \beta\left( a_i(\alpha b_{i+1}a_{i+2}b_{i+3} + \beta a_{i+1}b_{i+2}b_{i+3}) 
		- (\alpha a_ib_{i+1}a_{i+2} + \beta a_ia_{i+1}b_{i+2})b_{i+3}\right) = 0.
\end{align*}
Hence, a $\kk$-basis for $\cC_n(\alpha,\beta)$ consists of those paths that avoid the leading terms. It is straightforward to see that these are paths in 
\eqref{eq.CbasisWithXi}.

There is a bijection between paths with source $e_i$ and monomoials $x^k y^\ell z^m$ in $\kk[x,y,z]$ where $x,z$ are given degree 1 and $y$ has degree 2. The statement on the total Hilbert series is now clear.
\end{proof}

As in Remark \ref{rmk.Bparams}, it is not possible to choose parameters more generally in type C. Like in the type A case, the class of type C algebras is preserved under taking opposite rings.

\begin{proposition}\label{prop.Cop}
Let $\alpha, \beta \in \kk$. Then $\cC(\alpha, \beta)^\op \cong \cC(\alpha, \beta)$.
\end{proposition}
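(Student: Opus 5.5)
The plan is to write down an explicit isomorphism $\phi\colon \cC_n(\alpha,\beta)\to\cC_n(\alpha,\beta)^\op$ that reverses the cyclic orientation of $\hatQ$ and swaps the roles of the two families of arrows, in the same spirit as Propositions \ref{prop.Aop} and \ref{prop.Bop}. As there, denote elements of $\cC^\op$ with primes. In $\cC^\op$ the arrows satisfy $a_i'=e_{i+1}'a_i'e_i'$ and $b_i'=e_{i+1}'b_i'e_i'$, so both families now run from $i+1$ to $i$.

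First I would record the defining relations of $\cC^\op$ by reversing the words in Definition \ref{defn.qDU_C}:
\begin{align*}
a_{i+2}'a_{i+1}'b_i' &= \alpha\, a_{i+2}'b_{i+1}'a_i' + \beta\, b_{i+2}'a_{i+1}'a_i', \\
a_{i+2}'b_{i+1}'b_i' &= \alpha\, b_{i+2}'a_{i+1}'b_i' + \beta\, b_{i+2}'b_{i+1}'a_i'.
\end{align*}
Next, define $\phi$ on generators of $\kk\hatQ$ by
\[ \phi(e_i)=e'_{-i},\qquad \phi(a_i)=b'_{-i-1},\qquad \phi(b_i)=a'_{-i-1}. \]
This is compatible with sources and targets: $a_i$ runs from $i$ to $i+1$, and $b'_{-i-1}$ runs in $\cC^\op$ from $-i$ to $-i-1$, which is exactly $\phi(e_i)$ to $\phi(e_{i+1})$. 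Hence $\phi$ extends to an algebra map $\kk\hatQ\to\cC^\op$.

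The main step is to check that $\phi$ kills the defining relations. Applying $\phi$ to the second relation $b_ib_{i+1}a_{i+2}=\alpha b_ia_{i+1}b_{i+2}+\beta a_ib_{i+1}b_{i+2}$ and setting $k=-i-3$, one gets
\[ a'_{k+2}a'_{k+1}b'_k=\alpha\, a'_{k+2}b'_{k+1}a'_k+\beta\, b'_{k+2}a'_{k+1}a'_k, \]
which is exactly the first opposite relation at index $k$. Because $\phi$ is symmetric under swapping $a$ and $b$, the same reindexing sends the first relation of $\cC$ to the second opposite relation. In particular, no rescaling by $\beta^{-1}$ is needed, unlike in type B, so the argument works for all $\alpha,\beta$.

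Finally, $\phi$ descends to a surjection $\cC\to\cC^\op$, since it is bijective on vertices and arrows. The same formulas with the roles reversed give an inverse map $\cC^\op\to\cC$, which is well defined by the identical computation read backwards. Alternatively, one can note that both algebras have the same matrix-valued Hilbert series, by Lemma \ref{lem.Cbasis} applied to $\cC$ and its opposite, so a graded surjection between them is an isomorphism. I expect the only real obstacle to be getting the index shift right so that the images land on exactly the stated relations; the choice $\phi(a_i)=b'_{-i-1}$ rather than $a'_{-i-1}$ is what makes the terms line up.
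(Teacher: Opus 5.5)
Your proposal is correct and is essentially the paper's proof: the paper uses the same orientation-reversing map that swaps $a$ and $b$, normalized as $\phi(e_i)=e'_{-i+1}$, $\phi(a_i)=b'_{-i}$, $\phi(b_i)=a'_{-i}$, which differs from yours only by the index-shift automorphism of $\cC$. Your reindexing $k=-i-3$ does send each defining relation of $\cC$ onto a defining relation of $\cC^\op$. Two small caveats. First, the second relation check should be done directly, as it is easy, rather than by appeal to an $a\leftrightarrow b$ symmetry, since the relations themselves are not symmetric under that swap. Second, in your alternative argument the matrix-valued Hilbert series of $\cC^\op$ is the transpose of that of $\cC$, so you should compare total Hilbert series instead, which trivially agree.
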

\begin{proof}
Denote the elements of $\cC^\op$ with primes. Note that $a_{-i}' = e_{-i+1}' a_{-i}' e_{-i}'$ and $b_{-i}' = e_{-i+1}' b_{-i}' e_{-i}'$.

The relations of Definition~\ref{defn.qDU_C} in the opposite ring, after replacing $i$ with $-i-2$, become
\begin{align*}
    a_{-i}'a_{-i-1}'b_{-i-2}' &= \alpha a_{-i}'b_{-i-1}a_{-i-2}' + \beta b_{-i}'a_{-i-1}'a_{-i-2}' \\
	a_{-i}'b_{-i-1}'b_{-i-2}' &= \alpha b_{-i}'a_{-i-1}'b_{-i-2}' + \beta b_{-i}'b_{-i-1}'a_{-i-2}'.
\end{align*}
So the bijection $\phi:\cC \to \cC^\op$ defined by
\[ \phi(e_i) = e_{-i+1}', \qquad \phi(a_i) = b_{-i}', \qquad \phi(b_i) = a_{-i}'\]
is an isomorphism.
\end{proof}

The proof of the following is similar to Lemma \ref{lem.Bpot}.

\begin{lemma}\label{lem.Cpot}
Fix $n \in \ZZ_+$ and let $\alpha,\beta \in \kk$ with $\beta \neq 0$. Define $\eta \in \Aut_{\gr}(\kk\hatQ)$ by 
\[ 
\eta(e_i) = e_{i-4}, \quad
\eta(a_i)=\beta a_{i-4}, \quad
\eta(b_i)=\beta\inv b_{i-4}.
\]
Let $m$ be the number of orbits of $\eta$ on $\hatQ_0$.
Then $\cC_n(\alpha,\beta)$ is the derivation-quotient on $\kk\hatQ$ with $\eta$-twisted superpotential
\[
\Omega = \sum_{i=0}^{m-1} [b_ib_{i+1}a_{i+2}a_{i+3}] - \alpha[b_ia_{i+1}b_{i+2}a_{i+3}].
\]
\end{lemma}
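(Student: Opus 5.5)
The plan is to mirror Lemma \ref{lem.Bpot}. The work has two parts. First, check that $\Omega$ is genuinely an $\eta$-twisted superpotential: each compact form $[p]$ must be closed under the twisting map $\alpha_1\alpha_2\alpha_3\alpha_4 \mapsto -\eta(\alpha_4)\alpha_1\alpha_2\alpha_3$ (degree $4$, so the sign is $(-1)^{5}=-1$). Second, compute the cyclic derivatives $\partial_x\Omega$ for $x\in\hatQ_1$ and compare them with the defining relations of $\cC_n(\alpha,\beta)$.

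\emph{Step 1: expand the orbits.} Starting from $p=b_0b_1a_2a_3$ (a path $e_0\to e_4$), apply the twist repeatedly:
\[
b_0b_1a_2a_3 \mapsto -\beta a_{-1}b_0b_1a_2 \mapsto \beta^2 a_{-2}a_{-1}b_0b_1 \mapsto -\beta a_{-3}a_{-2}a_{-1}b_0 \mapsto b_{-4}b_{-3}a_{-2}a_{-1},
\]
using $\eta(a_i)=\beta a_{i-4}$ and $\eta(b_i)=\beta\inv b_{i-4}$. This gives back the starting path shifted by $-4$ with coefficient $+1$. So the orbit of a summand runs over all shifts by multiples of $4$ modulo $n$, i.e.\ over one $\eta$-orbit of vertices, and the $m$ starting indices $i=0,\dots,m-1$ cover every vertex exactly once. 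It follows that
\[
\sum_{i=0}^{m-1}[b_ib_{i+1}a_{i+2}a_{i+3}] = \sum_{i\in\hatQ_0} \bigl(b_ib_{i+1}a_{i+2}a_{i+3} - \beta a_{i-1}b_ib_{i+1}a_{i+2} + \beta^2 a_{i-2}a_{i-1}b_ib_{i+1} - \beta b_{i-3}a_{i-2}a_{i-1}b_i\bigr).
\]
The same computation for $b_0a_1b_2a_3$ gives the sequence $-\beta a_{-1}b_0a_1b_2$, then $b_{-2}a_{-1}b_0a_1$, which is a shift by $-2$. Hence
\[
\sum_{i=0}^{m-1}[b_ia_{i+1}b_{i+2}a_{i+3}] = \sum_{i\in\hatQ_0} \bigl(b_ia_{i+1}b_{i+2}a_{i+3} - \beta a_{i-1}b_ib_{i+1}\cdots\bigr),
\]
and one must be careful here. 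Because the second orbit closes up after a shift of $2$ rather than $4$, summing over $i=0,\dots,m-1$ may count each term twice, or the orbit may close with a sign or power of $\beta$ that forces a consistency condition. The coefficient of $\alpha$ in $\Omega$ has to be read with this normalization. Concretely: the full orbit under $\eta$ has length $4\cdot(n/\gcd(n,4))$ in terms of the path shift, and I would compute which scalar multiple of $\sum_{i}(b_ia_{i+1}b_{i+2}a_{i+3} - \beta a_{i-1}b_ia_{i+1}b_{i+2})$ arises, then rescale $\alpha$ if needed. \textbf{This bookkeeping of orbits, with cases according to $\gcd(n,4)$, is the main obstacle.}

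\emph{Step 2: take derivatives.} Left derivatives only need the terms that begin with a given arrow. For $\partial_{b_i}\Omega$ the relevant terms are $b_ib_{i+1}a_{i+2}a_{i+3}$, $-\beta b_ia_{i+1}a_{i+2}b_{i+3}$ (the fourth term, reindexed), and $-\alpha b_ia_{i+1}b_{i+2}a_{i+3}$. Thus
\[
\partial_{b_i}\Omega = b_{i+1}a_{i+2}a_{i+3} - \alpha a_{i+1}b_{i+2}a_{i+3} - \beta a_{i+1}a_{i+2}b_{i+3},
\]
which is the first defining relation, reindexed. Similarly $\partial_{a_i}\Omega$ collects $-\beta a_ib_{i+1}b_{i+2}a_{i+3}$, $\beta^2 a_ia_{i+1}b_{i+2}b_{i+3}$, and the $\alpha\beta$ term. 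Dividing by $-\beta$, which is allowed since $\beta\neq0$, gives the second relation $b_{i+1}b_{i+2}a_{i+3} - \alpha b_{i+1}a_{i+2}b_{i+3} - \beta a_{i+1}b_{i+2}b_{i+3}$ up to reindexing. The two families of derivatives therefore generate exactly the ideal of Definition \ref{defn.qDU_C}, and $\cC_n(\alpha,\beta)$ is the derivation-quotient algebra.

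For small $n$, where $i$ and $i+4$ may coincide, this identification goes through unchanged, because indices are taken modulo $n$ and the orbit sums still enumerate every vertex.
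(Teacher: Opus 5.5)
Your route is the same as the paper's, which only says the proof is similar to Lemma~\ref{lem.Bpot}: expand the compact forms, then take left derivatives. Your expansion of $[b_ib_{i+1}a_{i+2}a_{i+3}]$ and your Step~2 derivatives are correct; the third twist should read $-\beta b_{-3}a_{-2}a_{-1}b_0$, not $-\beta a_{-3}a_{-2}a_{-1}b_0$.

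The gap is the step you flag as the main obstacle and then leave open, while Step~2 silently assumes the $\alpha$-term enters with coefficient exactly $\alpha$. Your own computation already removes the consistency worry. Writing $T$ for the twisting map, $T^2(b_0a_1b_2a_3)=b_{-2}a_{-1}b_0a_1$ with coefficient exactly $1$. So $[b_ia_{i+1}b_{i+2}a_{i+3}]$ is twist-invariant and consists of the terms $b_ja_{j+1}b_{j+2}a_{j+3}-\beta a_{j-1}b_ja_{j+1}b_{j+2}$ for $j\equiv i \pmod{\gcd(n,2)}$. These orbits number $\gcd(n,2)$, whereas $m=\gcd(n,4)$. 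When $n$ is odd or $n\equiv 2\pmod 4$ the two counts agree, every path is listed once, and your Step~2 finishes the proof.

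When $4\mid n$, however, for $i\in\{0,1\}$ the indices $i$ and $i+2$ give the same compact form. For $n=4$, the orbit of $b_0a_1b_2a_3$ is $\{b_0a_1b_2a_3,\ -\beta a_3b_0a_1b_2,\ b_2a_3b_0a_1,\ -\beta a_1b_2a_3b_0\}$, which is also the orbit of $b_2a_3b_0a_1$. Hence the $\alpha$-part of the displayed $\Omega$ is $-2\alpha\sum_{j}(b_ja_{j+1}b_{j+2}a_{j+3}-\beta a_{j-1}b_ja_{j+1}b_{j+2})$. This gives $\partial_{b_i}\Omega=b_{i+1}a_{i+2}a_{i+3}-2\alpha a_{i+1}b_{i+2}a_{i+3}-\beta a_{i+1}a_{i+2}b_{i+3}$, and similarly for $\partial_{a_i}\Omega$. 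So $\Omega$ as written yields the relations of $\cC_n(2\alpha,\beta)$, which is a different ideal when $\alpha\neq 0$.

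Your closing paragraph is therefore false in exactly the case $n=4$ it is meant to cover. For $4\mid n$ the statement holds only after a normalization. You can read $\sum_{i=0}^{m-1}$ as a sum over distinct compact forms, or run the $\alpha$-term only over $i=0,\dots,\gcd(n,2)-1$, or equivalently replace $\alpha$ by $\alpha/2$.

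The paper's one-line proof passes over this as well. The same slip occurs in Lemma~\ref{lem.Bpot} for even $n$, where $[b_0a_0b_1a_1]$ already covers every vertex. Still, a complete argument has to carry out this case split rather than leave ``rescale $\alpha$ if needed'' open.
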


\begin{lemma}\label{lem.Cnnoeth}
Let $\alpha \in \kk$. Then $\cC=\cC_n(\alpha,0)$ is not a piecewise domain and not noetherian.
\end{lemma}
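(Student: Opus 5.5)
With $\beta=0$ the defining relations of $\cC_n(\alpha,0)$ factor, and I plan to follow the proof of Lemma \ref{lem.Bnnoeth} closely. Setting $x_i=b_ia_{i+1}$, the relations become
\begin{align*}
0 &= b_ia_{i+1}a_{i+2}-\alpha a_ib_{i+1}a_{i+2} = (x_i-\alpha a_ib_{i+1})a_{i+2},\\
0 &= b_ib_{i+1}a_{i+2}-\alpha b_ia_{i+1}b_{i+2} = b_i(x_{i+1}-\alpha a_{i+1}b_{i+2}).
\end{align*}
The element $y_i := x_i-\alpha a_ib_{i+1}\in e_i\cC e_{i+2}$ is nonzero, because by Lemma \ref{lem.Cbasis} the path $x_i$ is a basis element. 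The arrows $a_{i+2}$ and $b_{i-1}$ are nonzero as well. The two displayed relations therefore exhibit nonzero $y_i$ and $a_{i+2}$ (and $b_{i-1}$ and $y_i$) whose product is $0$. This shows directly that $\cC$ is not a piecewise domain.

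For non-noetherianity on the right, I will work in $e_0\cC$ and use the basis \eqref{eq.CbasisWithXi} with $i=0$. Source and target considerations give $y_0a_j=0$ for every $j$: when $j=2$ this is the relation, and for $j\neq 2$ the targets do not match. So right multiples of $y_0$ only involve words starting with $b_2$. I will also need a commutation rule, and it comes from the second relation: $b_ix_{i+1}=\alpha b_ia_{i+1}b_{i+2}=\alpha x_ib_{i+2}$. This lets me push $b$'s past $x$'s, so that $b_2$ times a basis word of the form $x_2x_4\cdots b\cdots b$ equals a scalar times a word beginning with $x$'s followed by $b$'s.

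Next, set $U=a_0a_1\cdots a_{n-1}$ and define
\[
I_s=\sum_{m=1}^s U^m y_0\cC .
\]
By the reductions above, $I_s$ lies in the span of basis words $U^m x_0 x_2\cdots b\cdots$ and $U^m a_0 b_1 x_3\cdots b\cdots$ with $m\le s$. Consequently no element of $I_s$ has $U^{s+1}x_0$ in its support, whereas $U^{s+1}y_0\in I_{s+1}$ does. The chain $I_s$ is therefore strictly increasing, and $\cC$ is not right noetherian. For the left side I expect to use Proposition \ref{prop.Cop}: $\cC(\alpha,0)^{\op}\iso\cC(\alpha,0)$, so failure of right noetherianity transfers to failure of left noetherianity. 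This avoids repeating the $J_s$ argument.

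The main obstacle will be the bookkeeping of normal forms. I need to check that $y_0$ times a basis word, after rewriting with $b_ix_{i+1}=\alpha x_ib_{i+2}$, really lands in the described span, including the case $\alpha=0$ where many terms vanish. I also need to check that the indexing in \eqref{eq.CbasisWithXi}, where the $x$'s step by $2$, is compatible with the words $U^m$. If this gets messy, I will fall back on a leading-term argument: under the ordering from Lemma \ref{lem.Cbasis}, the element $x_0$ is itself a normal-form word, and I will track only the coefficient of $U^{s+1}x_0$.
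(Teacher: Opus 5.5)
Your proposal is correct and follows the paper's proof essentially step for step. It uses the same factored relations to show failure of the piecewise-domain property, the same strictly increasing chain $I_s=\sum_{m\le s}U^m(x_0-\alpha a_0b_1)\cC$ in $e_0\cC$ with the commutation rule $b_ix_{i+1}=\alpha x_ib_{i+2}$, and Proposition~\ref{prop.Cop} for the left side.

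Fix two indexing slips. It is $b_1$, not $b_2$, that gets pushed through $x_2x_4\cdots x_{2j}$. The resulting words are therefore $U^m a_0x_1x_3\cdots x_{2j-1}b_{2j+1}\cdots$, not $U^m a_0b_1x_3\cdots$.
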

\begin{proof}
Since $\beta=0$, then the relations in Definition \ref{defn.qDU_C} are
\begin{align*}
0 &= (b_ia_{i+1} - \alpha a_ib_{i+1})a_{i+2} \\
0 &= b_i(b_{i+1}a_{i+2} - \alpha a_{i+1}b_{i+2}).
\end{align*}
Thus, $\cC$ is not a piecewise domain.

To show that $\cC$ is non-noetherian, we modify the proof of Lemma~\ref{lem.Bnnoeth} with slightly less detail.

It suffices to show that $e_0 \cC$ is not noetherian as a right $\cC$-module. By Lemma~\ref{lem.Cbasis}, $e_0 \cC$ has a $\kk$-basis of the form \eqref{eq.CbasisWithXi} with $i=0$.

Since $\beta = 0$, the relations in Definition~\ref{defn.qDU_C} yield
\begin{equation}\label{eq.ACkill}
(\alpha a_i b_{i+1} - x_{i})a_{i+2} 
    = b_i(\alpha a_{i+1} b_{i+2}- x_{i+1}) = 0.
\end{equation}

Set $U= a_0 a_1 \cdots a_{n-1}$. 
For each $s \geq 1$, define
\[ I_s = \sum_{m=1}^s U^m(\alpha a_0 b_1 - x_0)\cC.\]
By \eqref{eq.ACkill}, 
\[ I_s = \sum_{m=1}^s \sum_{j,k \geq 0} \kk U^m (\alpha a_0 b_1 - x_0) x_2 x_4 \cdots x_{2j} b_{2j+2}b_{2j+3}\cdots b_{2j+k+1},\]
with the convention that when $j=0$, $x_2 x_4 \cdots x_{2j} = 1$ and when $k=0$, $b_{2j+2}b_{2j+3}\cdots b_{2j+k+1} = 1$.

By \eqref{eq.ACkill}, $b_i x_{i+1} = b_i (\alpha a_{i+1} b_{i+2}) = x_i(\alpha b_{i+2})$ and hence 
\[
b_1 (x_2 x_4 \cdots x_{2j}) = \alpha^j (x_1 x_3 \cdots x_{2j-1}) b_{2j+1}
\] for any $j \geq 0$.
Thus $I_s$ is contained in the $\kk$-span of the basis monomials
\[
U^m x_0 x_2 x_4 \cdots x_{2j} b_{2j+2}b_{2j+3}\cdots b_{2j+k+1}, \qquad 
U^m a_0 x_1 x_3 \cdots x_{2j-1} b_{2j+1} \cdots b_{2j+k+1}
\]
with $k, j \geq 0$ and $1 \leq m \leq s$. Thus no polynomial in $I_s$ has $U^{s+1} x_{0}$ in its support, while $I_{s+1}$ does contain the polynomial $U^{s+1}(\alpha a_0 b_1 - x_{0})$.

Thus $I_s \subsetneq I_{s+1}$ for all $s \geq 1$ and so $e_0 \cC$ is not noetherian. So $\cC$ is not right noetherian. A similar proof shows $\cC$ is not left noetherian, or we may appeal to $\cC(\alpha, \beta)^\op \cong \cC(\alpha, \beta)$ via Proposition~\ref{prop.Cop}.
\end{proof}

Fix $p,q \in \kk^\times$. Let $C=C_n(q)$ and let $\phi$ be the automorphism of $\kk\hatQ$ defined by 
\[
\phi(e_i) = e_{i-2}, \quad
\phi(a_i) = p a_{i-2}, \quad
\phi(b_i) = p\inv b_{i-2}.
\]
Let $\cD=\cD(\hatQ,\omega,\phi)$ with
\[ \omega = \sum_{i \in Q_0} b_ia_{i+1} - q a_i b_{i+1}.\]
Then the relations in $\cD$ are
\begin{align*}
\omega a_i - \phi(a_i)\omega
%	&= (b_{i-2}a_{i-1} - q a_{i-2} b_{i-1})a_i - pa_{i-2}(b_{i-1}a_i - q a_{i-1} b_i) \\
	&= b_{i-2}a_{i-1}a_i - (p+q) a_{i-2} b_{i-1}a_i + q pa_{i-2}a_{i-1} b_i \\
\omega b_i - \phi(b_i)\omega
%	&= (b_{i-2}a_{i-1} - q a_{i-2} b_{i-1}) b_i - p\inv b_{i-2}(b_{i-1}a_i - q a_{i-1} b_i) \\
	&= -p\inv (b_{i-2}b_{i-1}a_i  - (p+q) b_{i-2}a_{i-1}b_i + pq a_{i-2} b_{i-1}b_i).
\end{align*}
These are precisely the relations given in Definition \ref{defn.qDU_C} with $\alpha=q+p$ and $\beta=-qp$. That is, $\omega$ is a normal element in $\cC=\cC_n(\alpha,\beta)$ and $C=\cC/(\omega)$. Given $\alpha,\beta \in \kk$, the $p,q$ that give $\alpha,\beta$ in the above construction are precisely the roots of the polynomial $t^2-\alpha t - \beta$.

\begin{theorem}\label{thm.typeC}
Fix $n \in \ZZ_+$ and let $\alpha,\beta \in \kk$. Let $\cC=\cC_n(\alpha,\beta)$. The following are equivalent:
\begin{enumerate}
\item $\beta \neq 0$,
\item \label{Cpwd} $\cC$ is a piecewise domain, and
\item \label{Cnoeth} $\cC$ is right (or left) noetherian.
\end{enumerate}
Moreover, if any of the above conditions hold, then $\cC$ is twisted graded Calabi--Yau of dimension three. 
\end{theorem}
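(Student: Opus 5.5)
The proof will follow the template of Theorems \ref{thm.typeA} and \ref{thm.typeB}, using the normal-extension setup just described for type C.

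\emph{Direction $\beta\neq 0 \Rightarrow$ \eqref{Cpwd}, \eqref{Cnoeth}.} Assume $\beta \neq 0$. Since $\kk$ is algebraically closed, choose $p,q$ to be the two roots of $t^2-\alpha t-\beta$. Then $p+q=\alpha$ and $pq=-\beta\neq 0$, so $p,q\in\kk^\times$. By the computation preceding the theorem, $\omega=\sum_i b_ia_{i+1}-qa_ib_{i+1}$ is normal in $\cC=\cC_n(\alpha,\beta)$, with $\phi(e_i)=e_{i-2}$, and $\cC/(\omega)\iso C=C_n(q)$.

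For regularity of $\omega$, compare total Hilbert series. Lemma \ref{lem.Cbasis} gives $h_{\cC}^{\tot}=(1-t)^{-2}(1-t^2)^{-1}$, while the basis of $C$ used in the proof of Lemma \ref{lem.typeC} gives $h_C^{\tot}=(1-t)^{-2}$. These totals should be normalized consistently: both should be per source vertex, or both multiplied by $n$. Then $h_{\cC}^{\tot}=h_C^{\tot}(1-t^2)^{-1}$, and Lemma \ref{lem.hilb}\eqref{hilb1} with $m=2$ shows that $\omega$ is regular.

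With regularity in hand, the conclusions follow from earlier results:
\begin{itemize}
\item By Lemma \ref{lem.typeC}, $C$ is twisted graded Calabi--Yau of dimension two. Each vertex of $\hatQ$ has outdegree two, so \cite[Theorem 1.4]{RR3} shows $C$ is a piecewise domain. Lemma \ref{lem.PWD2} then lifts this to $\cC$, giving \eqref{Cpwd}.
\item $C$ has $h^{\tot}=n(1-t)^{-2}$, hence finite GK dimension, so it is noetherian by \cite[Theorem 6.6]{RR2}. Lemma \ref{lem.hilb}\eqref{hilb3} lifts noetherianity to $\cC$ on both sides, giving \eqref{Cnoeth}.
\item $\cC_0=\kk^n$, so $\cC/J(\cC)$ is separable. $C$ is generalized AS regular of dimension two by \cite[Theorem 1.5]{RR2}. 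Proposition \ref{prop.lift} then shows $\cC$ is generalized AS regular of dimension three, and hence twisted graded Calabi--Yau of dimension three.
\end{itemize}

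\emph{Converse.} If $\beta=0$, Lemma \ref{lem.Cnnoeth} shows $\cC$ is neither a piecewise domain nor right or left noetherian. Thus each of \eqref{Cpwd} and \eqref{Cnoeth}, including the one-sided versions, forces $\beta\neq0$, which closes the cycle of equivalences.

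\emph{Main obstacle.} The only step needing real care is the regularity of $\omega$. Specifically, the Hilbert series of $C$ must match the basis count in Lemma \ref{lem.typeC}, and that basis must be compatible with Lemma \ref{lem.Cbasis}. If the total Hilbert series comparison becomes ambiguous because of normalization, I would fall back on a direct argument. In the basis \eqref{eq.CbasisWithXi}, the elements $x_i$ together with the $a$--$b$ words reduce modulo $\omega$ to the $C$-basis. From this one can read off that left and right multiplication by $\omega$ is injective: the leading term of $\omega f$ under the degree-lexicographic order is nonzero whenever $f\neq0$.
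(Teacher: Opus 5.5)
Your proposal is correct and follows the paper's proof essentially step for step. You realize $\omega$ as a normal element using the roots of $t^2-\alpha t-\beta$, get regularity from the total Hilbert series comparison and Lemma \ref{lem.hilb}, lift the piecewise domain, noetherian and twisted Calabi--Yau properties via Lemma \ref{lem.PWD2}, Lemma \ref{lem.hilb} and Proposition \ref{prop.lift}, and close the converse with Lemma \ref{lem.Cnnoeth}. Your write-up is also slightly more careful than the printed proof: you use the correct factor $(1-t^2)^{-1}$ with consistent normalization, and you obtain noetherianity directly rather than through Proposition \ref{prop.norm_ext}, whose hypothesis already presupposes the Calabi--Yau property.
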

\begin{proof}
Suppose $\beta \neq 0$. Set $C=\cC/(\omega)$. By Lemma \ref{lem.Cbasis}, $h_{\cC}^{\tot}=(1-t^2)h_C^{\tot}$ and so, by Lemma \ref{lem.hilb}, $\omega$ is regular. Now \eqref{Cpwd} follows from Lemma \ref{lem.PWD2} and \eqref{Cnoeth} from Proposition \ref{prop.norm_ext}. The converse now follows from Lemma \ref{lem.Cnnoeth}. Finally, given the equivalent conditions, $\cC$ is twisted graded Calabi--Yau of dimension three by Proposition \ref{prop.lift}.
\end{proof}

\begin{corollary}
Fix $\alpha \in \kk$ and $\beta \in \kk^\times$. Let $\cC=\cC_n(\alpha,\beta)$. The Nakayama automorphism $\mu^{\cC}$ is given by
\[ \mu^C(e_i)=e_{i+4}, \quad \mu^{\cC}(a_i)=\beta\inv a_{i+4}, \quad \mu^{\cC}(b_i)=\beta b_{i+4}.\]
The matrix-valued Hilbert series of $\cC$ is $h_{\cC}(t)=(I-Mt+P^Ct^2)\inv(I-P^Ct^2)\inv$ where $P^C$ is defined as in \eqref{eq.PtypeC}.
\end{corollary}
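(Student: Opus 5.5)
The corollary has two parts, and I would treat them in turn. For the Nakayama automorphism, the first step is to use Theorem \ref{thm.typeC}: it shows that $\cC$ is twisted graded Calabi--Yau of dimension three. Lemma \ref{lem.Cpot} then exhibits $\cC$ as the derivation-quotient algebra for the $\eta$-twisted superpotential $\Omega$. Here $\Omega$ has degree $d=4$, and $\eta(e_i)=e_{i-4}$, $\eta(a_i)=\beta a_{i-4}$, $\eta(b_i)=\beta\inv b_{i-4}$.

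By \cite[Theorem 6.8]{BSW}, in the paper's convention, $\mu^{\cC}=(-1)^{d+1}\eta\inv=-\eta\inv$. Inverting $\eta$ gives $\eta\inv(e_i)=e_{i+4}$, $\eta\inv(a_i)=\beta\inv a_{i+4}$, $\eta\inv(b_i)=\beta b_{i+4}$. The only remaining issue is the sign $-1$ on arrows. The Nakayama automorphism is determined only up to inner automorphisms, so I would absorb the sign by conjugating with the unit $u=\sum_i \epsilon_i e_i$, where $\epsilon_i=\pm1$. Conjugation by $u$ scales each arrow from $i$ to $j$ by $\epsilon_i\epsilon_j$, so I need $\epsilon_i\epsilon_{i+1}=-1$ for all $i$.

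This is the main obstacle, since such a choice of signs exists only when $n$ is even. If that fails, I would fall back on a different check. By Lemma \ref{lem.hilb} and the exact sequence, regularity of $\omega$ holds, so Theorem \ref{thm.lift} gives $\mu^{C}=\mu^{\cC}\circ\phi_\omega$. I would compute $\mu^C$ for $C=C_n(q)$ directly from $\tau$, namely $\mu^C=-\tau^{-1}$ up to convention, and then solve for $\mu^{\cC}$. With $\phi$ from the construction, $\phi(a_i)=pa_{i-2}$ and $\phi(b_i)=p\inv b_{i-2}$. The vertex action $e_i\mapsto e_{i+4}$ already follows from Proposition \ref{prop.norm_ext}, since $\mu_0^{\cC}=(\mu_0^C)^2$ and Lemma \ref{lem.typeC} gives $\mu_0^C(e_i)=e_{i+2}$. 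Both routes should agree up to inner automorphism, and the $\beta^{\pm1}$ scalars then follow, as $\beta=-pq$.

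For the Hilbert series, $\omega$ is regular by the proof of Theorem \ref{thm.typeC}. Lemma \ref{lem.hilb}\eqref{hilb1} with $m=2$ gives $h_{\cC}(t)=h_C(t)(I-P_\omega\inv t^2)\inv$. Lemma \ref{lem.typeC} together with \cite[Lemma 7.6]{RR1} gives $h_C(t)=(I-Mt+P^Ct^2)\inv$. By the remark after Proposition \ref{prop.norm_ext}, $P_\omega\inv=P$ is the permutation matrix of $\mu_0^C$, which is $P^C$. Substituting these yields the stated formula.
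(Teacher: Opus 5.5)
Your Hilbert-series argument is correct. It is the argument the paper intends, since the paper gives no written proof: Lemma \ref{lem.hilb}\eqref{hilb1} with $m=2$, $h_C(t)=(I-Mt+P^Ct^2)\inv$ from Lemma \ref{lem.typeC}, and $P_\omega\inv=P^C$ because $\phi(e_i)=e_{i-2}$. Your Nakayama argument also starts on the intended route, Lemma \ref{lem.Cpot} together with $\mu=(-1)^{d+1}\eta\inv$ as quoted in Section \ref{sec.background}. You correctly obtain $\mu^{\cC}=-\eta\inv$, i.e.\ $a_i\mapsto-\beta\inv a_{i+4}$ and $b_i\mapsto-\beta b_{i+4}$. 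For even $n$, conjugation by $\sum_i(-1)^ie_i$ removes the sign, so your proof is complete in that case.

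The gap is the odd case, and your fallback cannot close it. You say the two routes ``should agree up to inner automorphism,'' but that agreement is exactly what forces the sign to persist. Suppose $\iota_u$ is a graded inner automorphism. Comparing lowest-degree terms in $ux=\iota_u(x)u$ shows that $\iota_u$ is conjugation by $u_0=\sum_i\lambda_ie_i$, which multiplies $a_i$ and $b_i$ by $\lambda_i\lambda_{i+1}\inv$. The product of these factors over $i\in\ZZ/n$ is $1$, whereas a global sign change on arrows would need it to be $(-1)^n$. Hence for odd $n$, every correct computation, including one through Theorem \ref{thm.lift}, lands in the inner class of $-\eta\inv$ and never in that of the displayed $\eta\inv$.

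The sign is not a convention artifact. Take $n=1$, which is permitted in Lemma \ref{lem.Cbasis} and Theorem \ref{thm.typeC}. Then $\cC_1(2,-1)$ is the enveloping algebra of the Heisenberg Lie algebra ($ba-ab$ is central), so it is Calabi--Yau with trivial Nakayama automorphism, matching $-\eta\inv=\mathrm{id}$. The displayed formula instead gives $a\mapsto-a$, $b\mapsto-b$, which is not inner, since the only units are scalars.

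So your argument proves the displayed formula up to inner automorphism only for even $n$. In general it proves $\mu^{\cC}(a_i)=-\beta\inv a_{i+4}$ and $\mu^{\cC}(b_i)=-\beta b_{i+4}$, consistent with the sign the paper keeps in its type A example. The statement as printed drops the factor $(-1)^{d+1}$ and is not correct for odd $n$. You should state that, rather than expect an alternative route to recover it.
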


\section{Normal extensions through Ore extensions}
\label{sec.ore}

In this section, we establish that the normal extensions constructed above may be realized through Ore extensions. Our strategy is inspired in part by the strategy of Minamoto for quiver Heisenberg algebras \cite{min}. However, we do not have the restriction that the quiver is acyclic. 

Let $R$ be a $\kk$-algebra and $\sigma$ an automorphism of $R$. A \emph{$\sigma$-derivation} of $R$ is a $\kk$-linear map $\delta:R \to R$ satisfying
\[ \delta(rr') = \sigma(r)\delta(r') + \delta(r)r' \quad\text{for all $r,r' \in R$.}\]
Given the above, the \emph{Ore extension} $S=R[x;\sigma,\delta]$ is generated over $R$ by $x$ subject to the relations $xr = \sigma(r)x + \delta(r)$ for all $r \in R$. If $R$ is twisted Calabi--Yau of dimension $d$, then $S$ is twisted Calabi--Yau of dimension $d+1$ by \cite[Theorem 3.3]{LWW2}.

Suppose that $R=\kk Q/I$ with $I \subset \kk Q_{\geq 2}$ homogeneous. Further, suppose that $\sigma$ is a graded automorphism and $\delta$ is homogeneous of degree one. This implies that $\sigma$ permutes the set of idempotents, so $\sigma(e_i)=e_{\sigma(i)}$ and $\sigma$ commutes with the source and target maps ($s$ and $t$, respectively). That is, if $a \in Q_1$ with $s(a)=e_i$ and $t(a)=e_j$, then $\sigma(a)$ satisfies $s(\sigma(a))=e_{\sigma(i)}$ and $t(\sigma(a))=e_{\sigma(j)}$. Moreover, $\delta(a) \in e_{\sigma(i)} R_2 e_{j}$. 

In $S=R[x;\sigma,\delta]$, we have $x = \sum x_i$ where $x_i = xe_i = e_{\sigma(i)}x$. That is, $x_i$ has $s(x_i) = e_{\sigma(i)}$ and $t(x_i)=e_i$. Then for $a \in e_i R e_j$, $x_i a = \sigma(a)x_j + \delta(a)$ is a homogeneous relation from $e_{\sigma(i)}$ to $e_j$. 

\begin{lemma}\label{lem.regular}
Let $R$ be a ring with $z \in R$ regular. Let $\sigma$ be an automorphism of $R$ and $\delta$ be a $\sigma$-derivation of $R$. Then $z$ is regular in $S=R[x;\sigma,\delta]$.
\end{lemma}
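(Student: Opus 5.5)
We will argue by leading coefficients with respect to the degree in $x$. Recall that $S=R[x;\sigma,\delta]$ is a free left $R$-module on $\{x^k\}_{k\ge 0}$. Every nonzero $f\in S$ can therefore be written uniquely as $f=\sum_{k=0}^m r_k x^k$ with $r_k\in R$ and $r_m\neq 0$. We call $m$ the degree of $f$ and $r_m$ its leading coefficient. We must show that $zf\neq 0$ and $fz\neq 0$ for every such $f$.

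\textbf{Left multiplication.} Here $zf=\sum_k (zr_k)x^k$ is already written in the left $R$-basis. Its coefficient of $x^m$ is $zr_m$, which is nonzero because $z$ is regular in $R$ and $r_m\neq 0$. Hence $zf\neq 0$.

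\textbf{Right multiplication.} Here we first move $z$ past the powers of $x$. We prove by induction on $k$, using $xr=\sigma(r)x+\delta(r)$, that for every $r\in R$
\[ x^k r = \sigma^k(r)x^k + (\text{terms of $x$-degree} < k). \]
The inductive step is
\[ x^{k+1}r = x\bigl(\sigma^k(r)x^k+\cdots\bigr) = \sigma^{k+1}(r)x^{k+1} + \delta(\sigma^k(r))x^k + x\cdot(\text{lower}), \]
and $x$ applied to terms of degree $<k$ produces terms of degree $<k+1$. Consequently
\[ fz=\sum_k r_k x^k z = r_m\sigma^m(z)x^m + (\text{lower degree terms}). \]
Since $\sigma$ is a ring automorphism, $\sigma^m(z)$ is again regular in $R$: if $r\sigma^m(z)=0$, then applying $\sigma^{-m}$ gives $\sigma^{-m}(r)z=0$, so $r=0$. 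It follows that $r_m\sigma^m(z)\neq 0$, and so $fz\neq 0$.

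The only point needing care is the right-multiplication step. It depends on the expansion of $x^k r$ in the left $R$-basis and on regularity being preserved under the automorphism $\sigma^m$. No hypothesis on $\delta$ is needed beyond its being a $\sigma$-derivation, since $\delta$ only contributes lower-degree terms.
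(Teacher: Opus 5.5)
Your proposal is correct and is essentially the paper's argument: compare coefficients of the top power of $x$, which are $zr_m$ for $zf$ and $r_m\sigma^m(z)$ for $fz$, and use that $\sigma^m(z)$ is again regular in $R$. You also supply two details the paper leaves implicit, namely the inductive expansion $x^k r=\sigma^k(r)x^k+(\text{lower})$ and the verification that $\sigma^m(z)$ is regular.
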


\begin{proof}
Suppose for contradiction that $z$ is not regular. Then there exists $f \in S \setminus \{0\}$ such that $zf=0$ or $fz=0$. Let $r \in R \setminus \{0\}$ and $d \in \NN$ such that $rx^d$ is the term of $f$ of highest $x$-degree. Then
\[ zf = zrx^d + \text{lower $x$-degree terms}.\]
So if $zf=0$, then $zr = 0$. But $z$ is regular in $R$, so $zr \neq 0$, a contradiction.

Similarly,
\[ fz = rx^dz + \text{lower $x$-degree terms}.\]
Then $rx^dz = r\sigma^d(z) x^d + $ lower $x$-degree terms. Thus, if $fz=0$, then $r\sigma^d(z) = 0$. But $\sigma$ is an $R$-automorphism, so $\sigma^d(z)$ is also regular in $R$. Hence, $r\sigma^d(z) \neq 0$ and we again have a contradiction.

Thus $z$ is regular in $S$.
\end{proof}

Let $Q$ be as in \eqref{defn.typeA}. We will build all of our algebras as iterated Ore extensions of the path algebra $\kk Q$, which is Calabi--Yau of dimension one with Nakayama automorphism satisfying $\mu(e_i)=e_{i+1}$ \cite[Proposition 6.6]{RR1}. Let $\sigma$ be a graded automorphism of $\kk Q$. Hence, there are scalars $q_i \in \kk^\times$ such that $\sigma(e_i) = e_{\sigma(i)}$ and $\sigma(a_i)=q_i a_{\sigma(i)}$ for all $i$.

Set $A=\kk Q[x;\sigma]$ so that $A$ is twisted Calabi--Yau of dimension two. Set $x = \sum x_i$ where $x_i = xe_i = e_{\sigma(i)} x$. Then the relations on $A$ are $x_i a_i = q_i a_{\sigma(i)}x_{i+1}$ for $i\in Q_0$. Note that this is a homogeneous relation $e_{\sigma(i)} \to e_{i+1}$ and so $\mu^A(e_i) = e_{\sigma\inv(i)+1}$. Let $\fQ$ be the underlying quiver of $A$ so that $A = \kk\fQ/(\omega)$ where
\[ \omega = \sum_{i \in Q_0} x_ia_i - q_i a_{\sigma(i)}x_{i+1}.\]

\begin{theorem}\label{thm.ore}
Let $A$ be as above and suppose that $q_i = q_{\sigma(i-1)}$ for all $i$. Define $\phi \in \Aut_{\gr}(A)$ by
\[ 
\phi(e_i) = e_{\sigma(i-1)}, \quad
\phi(a_i) = p_i a_{\sigma(i-1)}, \quad
\phi(x_i) = p_i\inv x_{\sigma(i-1)},
\] 
where $p_i \in \kk^\times$ and $p_{\sigma(i)}=p_{i+1}$ for all $i\in Q_0$. Then the normal extension $\cD(\fQ,\omega,\phi)$ is twisted Calabi--Yau of dimension three.
\end{theorem}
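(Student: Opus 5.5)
The plan is to realize $\cD=\cD(\fQ,\omega,\phi)$ as an Ore extension $A[y;\psi,\delta]$ of $A=\kk Q[x;\sigma]$, where $y$ plays the role of the ``other'' arrows. Then $\cD$ is twisted Calabi--Yau of dimension three by \cite[Theorem 3.3]{LWW2}, since $A$ is twisted Calabi--Yau of dimension two. The alternative route, via Proposition \ref{prop.lift}, needs regularity of $\omega$ in $\cD$, and the Ore description gives that too via Lemma \ref{lem.regular}. So the Ore realization is the heart of the argument.

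\textbf{Step 1: write out the relations.} First I compute the relations of $\cD$ explicitly. For $a_i$ and $x_i$ they are
\[ \omega a_i - \phi(a_i)\omega \quad\text{and}\quad \omega x_i - \phi(x_i)\omega. \]
Expanding $\omega$, the first is a degree-three relation involving $x_{\cdot}a_{\cdot}a_{\cdot}$, $a_{\cdot}x_{\cdot}a_{\cdot}$ and $a_{\cdot}a_{\cdot}x_{\cdot}$ terms. The hypotheses $q_i=q_{\sigma(i-1)}$ and $p_{\sigma(i)}=p_{i+1}$ should be exactly what makes the coefficients consistent, giving shapes like
\[ x a a = (p+q)\,a x a - pq\, a a x, \qquad x x a = (p+q)\,x a x - pq\, a x x, \]
with vertex-dependent scalars, as in types B and C.

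\textbf{Step 2: regroup into Ore form.} These are ``down-up'' type relations, so I will not use $x$ itself as the Ore variable. Instead I introduce $z=\omega$, or more precisely the degree-two elements $z_i=x_ia_i-q_ia_{\sigma(i)}x_{i+1}$ (the summands of $\omega$).

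1. Start from $R=\kk Q$ and form $R_1=\kk Q[x;\sigma]$, which is twisted Calabi--Yau of dimension two but now without imposing the relation $\omega=0$.
2. Adjoin a variable $z$ as $R_2=R_1'[z;\phi]$, where $R_1'$ is the algebra generated by $a_i,x_i$ before the commutation relation is imposed.
3. The cleaner approach is the one used for the Heisenberg algebras $H_{p,q}$ in Example \ref{ex.downup}. Take $S=\kk Q[z;\phi|_{\kk Q}]$ (degree two), then $S[x;\tilde\sigma,\delta]$ with
\[ \tilde\sigma(a_i)=q_ia_{\sigma(i)}, \qquad \tilde\sigma(z)=p\inv z \ (\text{suitably indexed}), \qquad \delta(a_i)=z\,e_{\ast}a_i\text{-component}, \qquad \delta(z)=0. \]
Then $x_ia_i=q_ia_{\sigma(i)}x_{i+1}+z_{\cdot}$ holds, so $z$ is superfluous and equals $\omega$.

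The relations $za=\phi(a)z$ and $zx=\phi(x)z$ become exactly $\omega a=\phi(a)\omega$ and $\omega x=\phi(x)\omega$. Eliminating $z$ then recovers $\cD$, giving $\cD\iso \kk Q[z;\phi][x;\tilde\sigma,\delta]$ in the graded sense, with $z$ of degree two. This is an iterated Ore extension of $\kk Q$, which is twisted Calabi--Yau of dimension one by \cite[Proposition 6.6]{RR1}. Two applications of \cite[Theorem 3.3]{LWW2} give dimension three. (Grading weights $z$ in degree two do not affect the argument.)

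\textbf{Step 3: the main obstacle.} The hard part is verifying that $\tilde\sigma$ is a well-defined automorphism of $\kk Q[z;\phi]$ and that $\delta$ is a $\tilde\sigma$-derivation. Specifically, I must check that the relation $za_i=\phi(a_i)z$ is respected:
\[ \tilde\sigma(z)\tilde\sigma(a_i)=\tilde\sigma(\phi(a_i))\tilde\sigma(z) \]
and
\[ \delta(z a_i)=\delta(\phi(a_i) z). \]
The first requires $\sigma$ and $\phi$ to commute on $\kk Q$ up to the scalars. This is where $q_i=q_{\sigma(i-1)}$ enters, since $\phi\sigma$ and $\sigma\phi$ both send $e_i$ to $e_{\sigma(\sigma(i)-1)}$ versus $e_{\sigma(\sigma(i-1))}$, which agree because $\sigma$ is a rotation. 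The second reduces to
\[ \tilde\sigma(z)\,\delta(a_i)=\delta(\phi(a_i))\,z, \]
which is where $p_{\sigma(i)}=p_{i+1}$ is used.

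I will do this bookkeeping idempotent-by-idempotent, tracking sources and targets. I will check the derivation only on generators and confirm compatibility with the single family of defining relations of $\kk Q[z;\phi]$.
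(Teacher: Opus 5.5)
Your Step 2(3) is the paper's proof. You take $R=\kk Q[z;\phi]$ with $z$ in degree two, so that $z_i=ze_i=e_{\sigma(i-1)}z$ and $z_ia_i=p_ia_{\sigma(i-1)}z_{i+1}$. You extend $\sigma$ to $R$, set $\delta(a_i)=z_{i+1}$ and $\delta(e_i)=\delta(z_i)=0$, and apply \cite[Theorem 3.3]{LWW2} twice starting from $\kk Q$. Finally you use $x_ia_i=q_ia_{\sigma(i)}x_{i+1}+z_{i+1}$ to eliminate $z_{i+1}=\omega e_{i+1}$, which turns the remaining relations into $\omega a=\phi(a)\omega$ and $\omega x=\phi(x)\omega$. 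Discard the opening framing (an Ore extension of $A$ itself) and items 1--2 of Step 2. $A$ is a quotient of $\cD$ by $\omega$, not a subring of it, and $\kk Q[x;\sigma]$ already is $A$ with $\omega=0$ imposed.

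One concrete correction: your guessed scalar $\tilde\sigma(z)=p\inv z$ is inverted, and as written the derivation check fails. Write $\tilde\sigma(z_i)=c_iz_{\sigma(i)}$. Applying $\delta$ to $z_ia_i=p_ia_{\sigma(i-1)}z_{i+1}$ gives $c_iz_{\sigma(i)}z_{i+1}=p_iz_{\sigma(i)}z_{i+1}$, using only $\sigma(i-1)+1=\sigma(i)$. This forces $c_i=p_i$, so $p_i\inv$ works only when $p_i^2=1$. The same answer falls out of the target relation: $\omega x_i=p_i\inv x_{\sigma(i-1)}\omega$ becomes $xz_i=p_iz_{\sigma(i)}x_i$ once $x$ is moved to the left. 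Your Step 3 also places the hypotheses in the wrong checks. The derivation check needs neither hypothesis. Both $q_i=q_{\sigma(i-1)}$ and $p_{\sigma(i)}=p_{i+1}$ are used in checking that $\tilde\sigma$ preserves the relation of $R$. With $c_i=p_i$, that check reduces to $q_ip_{\sigma(i)}=q_{\sigma(i-1)}p_{i+1}$.
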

\begin{proof}
Define $R=\kk Q[z;\phi]$ by $\phi(e_i) = e_{\sigma(i-1)}$ and $\phi(a_i) = p_i a_{\sigma(i-1)}$. Then $z = \sum z_i$ where $z_i = ze_i = e_{\sigma(i-1)}z$. So, the relations on $R$ are $z_i a_i = p_ia_{\sigma(i-1)}z_{i+1}$.

We now extend the automorphism $\sigma$ of $\kk Q$ to $R$ by setting $\sigma(z_i)=p_i z_{\sigma(i)}$. This is then an automorphism of $R$ because:
\begin{align*}
\sigma(z_i)\sigma(a_i) -    
    p_i\sigma(a_{\sigma(i-1)})\sigma(z_{i+1})
    &= p_i q_i z_{\sigma(i)} a_{\sigma(i)} - p_i p_{i+1} q_{\sigma(i-1)}a_{\sigma^2(i-1)}z_{\sigma(i+1)} \\
    &= p_i q_i \left(z_{\sigma(i)} a_{\sigma(i)} - p_{\sigma(i)} a_{\sigma^2(i-1)}z_{\sigma(i+1)} \right) = 0.
\end{align*}

Now define a $\kk$-linear map on $R$ by:
\[
\delta(e_i) = 0, \qquad
\delta(a_i) = z_{i+1}, \qquad
\delta(z_i) = 0.
\]
We verify that this is a $\sigma$-derivation of $R$ below:
\begin{align*}
    \delta(z_i a_i - p_ia_{\sigma(i-1)}z_{i+1})
        &= \left(\sigma(z_i)\delta(a_i) + \delta(z_i)a_i\right)
            - p_i\left( \sigma(a_{\sigma(i-1)})\delta(z_{i+1}) + \delta(a_{\sigma(i-1)})z_{i+1}\right) \\
        &= p_iz_{\sigma(i)}z_{i+1} - p_i z_{\sigma(i)}z_{i+1} = 0.
%        &= p_i\left(z_{\sigma(i)}z_{i+1} - z_{\sigma(i)}z_{i+1}\right)= 0.
\end{align*}

Set $S=R[x;\sigma,\delta]$. Then $S$ is twisted Calabi--Yau of dimension three and the relations on $S$ are
\begin{align*}
    xa_i &= x_ia_i = q_i a_{\sigma(i)}x_{i+1} + \delta(a_i) 
        = q_ia_{\sigma(i)}x_{i+1} + z_{i+1} \\
    xz_i &= x_{\sigma(i-1)}z_i 
        = p_ix_{\sigma(i)}y_i.
\end{align*}
Note that $z_{i+1} = ze_{i+1} = e_{\sigma(i+1)}z$. So, we may replace each $z_{i+1}$ with the corresponding relation $\omega_i = x_ia_i - q_i a_{\sigma(i)}x_{i+1}$. The element $z$ is regular by Lemma \ref{lem.regular} and so, setting $x=a^*$ gives that $H$ is a normal extension of $A$, so $S \iso \cD(\fQ,\omega,\phi)$.
\end{proof}

\begin{corollary}
Let $\cD=\cD(\fQ,\omega,\phi)$ be as in Theorem \ref{thm.ore}. Then $\cD$ is a noetherian piecewise domain of GK dimension three.
\end{corollary}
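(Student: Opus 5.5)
The plan is to exploit the iterated Ore extension structure built in the proof of Theorem \ref{thm.ore}, namely $\cD \iso S = R[x;\sigma,\delta]$ with $R = \kk Q[z;\phi]$. I will prove the three claims separately: the noetherian property, the piecewise domain property, and GK dimension three.

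\emph{Noetherian.} First, $\kk Q$ is noetherian because $Q = \widetilde{A_{n-1}}$ is a single oriented cycle. For each vertex $i$, $e_i\kk Q$ has one path in each degree. A graded right ideal of $e_i \kk Q$ is therefore generated by its lowest-degree path, and since $\kk Q$ is hereditary this extends to all right ideals. The Hilbert basis theorem for Ore extensions (skew version, $\sigma$ an automorphism) then gives that $R = \kk Q[z;\phi]$ and $S = R[x;\sigma,\delta]$ are left and right noetherian. As a cross-check, one could instead pass to $A = \cD/(\omega)$, which is the Ore extension $\kk Q[x;\sigma]$ and hence noetherian, and then apply Lemma \ref{lem.hilb}\eqref{hilb3}. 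This works once $\omega$, which is identified with $z$, is known to be regular; that regularity is exactly Lemma \ref{lem.regular}.

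\emph{Piecewise domain.} I will argue through the quotient. The path algebra $\kk Q$ is a piecewise domain, since products of nonzero elements of $e_i\kk Q e_j$ and $e_j \kk Q e_k$ have nonzero top-degree parts. Next I check that an Ore extension $T[y;\sigma]$ of a piecewise domain $T$, with $\sigma$ permuting the idempotents, is again a piecewise domain. Take elements of the form $e_iFe_j$ and $e_jGe_k$ and compare their leading $y$-coefficients. The leading coefficient of the product is $r\sigma^d(r')$, where $r$ and $\sigma^d(r')$ lie in compatible corner pieces $e_{i}Te_{\sigma^d(\cdot)}$, so it is nonzero. This shows $A = \kk Q[x;\sigma]$ is a piecewise domain. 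Since $\omega$ is regular and normal in $\cD$ with $\cD/(\omega) \iso A$, Lemma \ref{lem.PWD2} gives that $\cD$ is a piecewise domain. (The same leading-term argument, applied directly to $S$, also works.)

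\emph{GK dimension.} Lemma \ref{lem.hilb}\eqref{hilb1} gives $h^{\tot}_{\cD} = h^{\tot}_A(1-t^2)\inv$. Computing directly from the Ore basis $\{p\,x^k\}$ gives $h^{\tot}_A = h^{\tot}_{\kk Q}(1-t)\inv = n(1-t)^{-2}$. Hence $h^{\tot}_{\cD} = n(1-t)^{-2}(1-t^2)\inv$, whose pole of order three at $t=1$ gives $\GKdim \cD = 3$. I could alternatively cite Proposition \ref{prop.norm_ext} for finiteness, but the explicit series is cleaner.

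The main obstacle is making the identification $S \iso \cD(\fQ,\omega,\phi)$ precise enough to transfer regularity of $z$ to regularity of $\omega$, because the Hilbert series and Lemma \ref{lem.PWD2} both depend on it. Since Theorem \ref{thm.ore} already asserts this identification, I will invoke it and note that $\omega_i$ corresponds to $z_{i+1}$.
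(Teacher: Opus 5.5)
Your proof is correct, but it is assembled differently from the paper's.

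The paper argues as follows:
\begin{itemize}
\item It gets the piecewise domain property directly from the iterated Ore extension $S$, via \cite[Proposition 2.3]{GK1}.
\item It gets noetherianity by combining two facts: $A$ is noetherian (being twisted Calabi--Yau of dimension two and GK dimension two), and \cite[Lemma 8.2]{ATV1}.
\item It gets $\GKdim(\cD)=3$ from the adjacency-matrix machinery of \cite{RR1}: normality of the adjacency matrix of $\fQ$, then \cite[Proposition 8.10]{RR1}.
\end{itemize}

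You instead argue as follows:
\begin{itemize}
\item You apply the Hilbert basis theorem for Ore extensions twice, to get $S$ noetherian.
\item You lift the piecewise domain property from $A=\kk Q[x;\sigma]$ through Lemma \ref{lem.PWD2}.
\item You read off the GK dimension from the explicit series $h^{\tot}_{\cD}=n(1-t)^{-2}(1-t^2)\inv$, obtained from Lemma \ref{lem.hilb}\eqref{hilb1}.
\end{itemize}

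Your route is more elementary and self-contained. It also produces the exact total Hilbert series, in line with what Section \ref{sec.typeBC} computes by Gr\"obner bases in types B and C.

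The price is that every part of your argument rests on the identification $S\iso\cD$ with $z\leftrightarrow\omega$. The noetherian part uses $S$ itself, and the other two parts use it to get regularity of $\omega$. By contrast, the paper's noetherian argument needs only $\cD/(\omega)\iso A$. Your cross-check needs only that as well. Your remark that it requires regularity of $\omega$ is unnecessary, since Lemma \ref{lem.hilb}\eqref{hilb3} has no such hypothesis.

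One justification needs repair. Being hereditary is not what carries the ascending chain condition from graded right ideals of $\kk Q$ to arbitrary ones. You can fix this in either of two ways:
\begin{itemize}
\item Use the standard fact that an $\NN$-graded ring with ACC on graded right ideals is right noetherian.
\item More simply, note that $c=\sum_i a_ia_{i+1}\cdots a_{i+n-1}$ is central in $\kk Q$, and that $\kk Q$ is generated as a $\kk[c]$-module by the $n^2$ paths of length less than $n$. Hence $\kk Q$ is left and right noetherian.
\end{itemize}
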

\begin{proof}
That $\cD$ is a piecewise domain follows from the construction of $S$ and \cite[Proposition 2.3]{GK1}. The algebra $A$ is twisted Calabi--Yau of dimension two and GK dimension two. Since $\cD/(\omega) \iso A$, then $\cD$ is noetherian by \cite[Lemma 8.2]{ATV1}.

Because $A$ is twisted Calabi--Yau of dimension two, then the adjacency matrix $M$ of $\fQ$ is normal \cite[Proposition 7.1]{RR1}. Now the GK dimension of $\cD$ is three by \cite[Proposition 8.10]{RR1}.
\end{proof}

We now show how our three families of twisted graded Calabi--Yau algebras fit into the paradigm of Theorem \ref{thm.ore}.

\begin{proposition}\label{prop.ore}
Fix $n \geq 2$ and $p,q \in \kk^\times$. 
    
    (Type A) Let $\dblQ$ be the quiver from \eqref{eq.dblA}. 
    Define $\phi \in \kk\dblQ$ by
    \[
        \phi(e_i) = e_i, \quad
        \phi(a_i) = p a_i, \quad
        \phi(a_i^*) = p\inv a_i^*.
    \]
    Then the normal extension $\cD=\cD(\dblQ,\omega,\phi)$ is twisted Calabi--Yau of dimension three of type $(M,I,4)$ with Hilbert series $h_{\cD}(t)=(I-Mt+It^2)\inv(I-It^2)\inv$. Moreover, if we set $\alpha_i = q + p$ and $\beta_i = -pq$ for $i\in\dblQ_0$, then $\cD \iso \cH_n(\alpha,\beta)$.

    (Type B) Let $\tldQ$ be the quiver from \eqref{eq.Bn}.
    Define $\phi \in \kk\tldQ$ by
    \[
        \phi(e_i) = e_{i-1}, \quad
        \phi(a_i) = p a_{i-1}, \quad
        \phi(b_i) = p\inv b_{i-1}.
    \]
    Then the normal extension $\cD(\tldQ,\omega,\phi)$ is twisted Calabi--Yau of dimension three and type $(M,P^2,4)$ with Hilbert series $h_{\cD}(t)=(I-Mt+Pt^2)\inv(I-Pt^2)\inv$. Moreover, if we set $\alpha = q+p$ and $\beta = -pq$, then $\cD \iso \cB_n(\alpha,\beta)$.

    (Type C) Let $\hatQ$ be the quiver from \eqref{eq.typeC}. 
    Define $\phi \in \kk\hatQ$ by
    \[
        \phi(e_i) = e_{i-2}, \quad
        \phi(a_i) = p a_{i-2}, \quad
        \phi(b_i) = p\inv b_{i-2}.
    \]
    Then the normal extension $\cD(\tldQ,\omega,\phi)$ is twisted Calabi--Yau of dimension three and type $(M,P^2,4)$ with Hilbert series $h_{\cD}(t)=(I-Mt+Pt^2)\inv(I-Pt^2)\inv$. Moreover, if we set $\alpha = q+p$ and $\beta = -pq$, then $\cD \iso \cC_n(\alpha,\beta)$.

\end{proposition}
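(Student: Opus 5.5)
The plan is to realize each of the three algebras $A_n(q)$, $B_n(q)$, $C_n(q)$ as an Ore extension $\kk Q[x;\sigma]$ of the path algebra of the cyclic quiver $Q=\widetilde{A_{n-1}}$ in \eqref{eq.typeA}. Then the hypotheses of Theorem \ref{thm.ore} only need to be checked for the given $\phi$, and the type, Hilbert series, and identification with $\cH_n$, $\cB_n$, $\cC_n$ follow from results already in hand.

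\textbf{Step 1: Ore presentations.} For type B, take $\sigma=\mathrm{id}$ on $Q_0$ and $\sigma(a_i)=q a_i$. Then $x_i=xe_i$ is a loop at $e_i$, identified with $b_i$, and the relation $x_ia_i=q a_i x_{i+1}$ is exactly the defining relation of $B_n(q)$. For type C, take $\sigma(e_i)=e_{i-1}$ and $\sigma(a_i)=q a_{i-1}$. Then $x_i$ is an arrow $e_{i-1}\to e_i$, identified with $b_{i-1}$, and $x_ia_i=q a_{i-1}x_{i+1}$ becomes $b_{i-1}a_i=q a_{i-1}b_i$, the relation of $C_n(q)$. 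For type A, take $\sigma(e_i)=e_{i+2}$ and $\sigma(a_i)=q a_{i+2}$, with $a_i=u_i$. Here $x_i$ is an arrow $e_{i+2}\to e_i$, identified with $d$ after reindexing, and the relation $d_iu_i=q u_{i+1}d_{i+1}$ must be matched. I expect the exact reindexing in type A to need care. If it fails, I will instead choose the $\sigma$ that makes $\mu^A_0=\mathrm{id}$, matching the type $(M,I,4)$ in the statement.

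\textbf{Step 2: check the hypotheses of Theorem \ref{thm.ore}.} With constant $q_i=q$, the condition $q_i=q_{\sigma(i-1)}$ is automatic. With constant $p_i=p$, the condition $p_{\sigma(i)}=p_{i+1}$ is automatic. I then need to check that the stated $\phi$ agrees with the $\phi$ of Theorem \ref{thm.ore}, namely $\phi(e_i)=e_{\sigma(i-1)}$. Type B gives $e_{i-1}$ and type C gives $e_{i-2}$, as stated. Type A gives $e_{i+1}$ with $\sigma$ as above, which does not obviously match $\phi(e_i)=e_i$. I expect this to be the main obstacle: pinning down the correct $\sigma$ and the re-identification of $x_i$ with $d_j$ in type A so that $\phi$ fixes the vertices. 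The fallback is to bypass Theorem \ref{thm.ore} in type A and cite Theorem \ref{thm.typeA} directly, since the $\cH_n(\alpha,\beta)$ computation in Section \ref{sec.typeA} already gives the relations. Note that Theorem \ref{thm.ore} requires $n\ge 3$ in the introduction. For $n=2$ I will use Theorems \ref{thm.typeA}, \ref{thm.typeB}, \ref{thm.typeC}, because $\beta=-pq\neq0$.

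\textbf{Step 3: type, Hilbert series, and identification.} By Proposition \ref{prop.norm_ext}, $\cD$ has type $(M,P^2,4)$, where $P$ is the permutation matrix of $\mu_0^A$. For type A, the known Hilbert series \eqref{eq.Ahilb} forces $P=I$. The Hilbert series comes from Lemma \ref{lem.hilb}\eqref{hilb1} with $m=2$ and $P_\omega^{-1}=P$: one gets $h_\cD=h_A(I-Pt^2)^{-1}$, and $h_A=(I-Mt+Pt^2)^{-1}$ by \cite[Lemma 7.6]{RR1}. Finally, the relation computations already carried out in Sections \ref{sec.typeA} and \ref{sec.typeBC} show that $\omega a-\phi(a)\omega$ reproduces the defining relations with $\alpha=p+q$ and $\beta=-pq$. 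This gives the isomorphisms $\cD\iso\cH_n(\alpha,\beta)$, $\cB_n(\alpha,\beta)$, $\cC_n(\alpha,\beta)$.
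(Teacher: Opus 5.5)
Your approach is the paper's: realize $A_n(q)$, $B_n(q)$, $C_n(q)$ as $\kk Q[x;\sigma]$ and invoke Theorem \ref{thm.ore}. Your choices in types B and C ($\sigma(e_i)=e_i$ with $x_i\mapsto b_i$; $\sigma(e_i)=e_{i-1}$ with $x_i\mapsto b_{i-1}$) are exactly the paper's.

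The one actual error is in type A, and it is fixable. Since $x_i=xe_i=e_{\sigma(i)}x$ is an arrow $e_{\sigma(i)}\to e_i$, and the only arrow of $\dblQ$ entering $e_i$ other than $u_{i-1}$ is $d_i\colon e_{i+1}\to e_i$, you are forced to take $\sigma(e_i)=e_{i+1}$, $\sigma(a_i)=qa_{i+1}$ and $x_i\mapsto d_i$ (the paper's $a_i^*$). With your $\sigma(e_i)=e_{i+2}$ there is no arrow $e_{i+2}\to e_i$ to identify $x_i$ with.

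For the correct $\sigma$, the Ore relation $x_ia_i=qa_{\sigma(i)}x_{i+1}$ is literally $d_iu_i=qu_{i+1}d_{i+1}$, and $\phi(e_i)=e_{\sigma(i-1)}=e_i$. So the obstacle you anticipated never arises. This is also precisely what your fallback produces, because $\mu^A(e_i)=e_{\sigma\inv(i)+1}$ is the identity exactly when $\sigma(i)=i+1$.

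Your Step 3 spells out bookkeeping (type, Hilbert series, identification with $\cH_n,\cB_n,\cC_n$) that the paper leaves implicit. One citation is missing there: applying Lemma \ref{lem.hilb} to get $h_{\cD}=h_A(I-Pt^2)\inv$ requires $\omega$ to be regular in $\cD$. You can take this from the proof of Theorem \ref{thm.ore}, where $\omega$ corresponds to $z$, which is regular by Lemma \ref{lem.regular}. Alternatively, use the Hilbert series comparisons in the proofs of Theorems \ref{thm.typeA}, \ref{thm.typeB}, \ref{thm.typeC}.

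The separate treatment of $n=2$ is harmless, though the paper simply applies Theorem \ref{thm.ore} for all $n\geq 2$.
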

\begin{proof}
In each case, it suffices to realize the corresponding Calabi--Yau algebra of dimension two as an Ore extension of $\kk Q$ with $Q$ as in \eqref{eq.typeA}. 
\begin{enumerate}
    \item[(Type A)] Let $\sigma_A \in \Aut_{\gr}(\kk Q)$ be defined by $\sigma_A(e_i)=e_{i+1}$ and $\sigma_A(a_i)=q a_{i+1}$. Then $\kk Q[x;\sigma_A] \iso A_n(\bq)$ via the isomorphism $a_i \mapsto a_i$ and $x_i \mapsto a_i^*$. 

    \item[(Type B)] Let $\sigma_B \in \Aut_{\gr}(\kk Q)$ be defined by $\sigma_B(e_i)=e_i$ and $\sigma_B(a_i)=q a_i$. Then $\kk Q[x;\sigma_B] \iso B_n(\bq)$ via the isomorphism $a_i \mapsto a_i$ and $x_i \mapsto b_i$. 

    \item[(Type C)] Let $\sigma_C \in \Aut_{\gr}(\kk Q)$ be defined by $\sigma_C(e_i)=e_{i-1}$ and $\sigma_C(a_i)=q a_{i-1}$. Then $\kk Q[x;\sigma_C] \iso C_n(\bq)$ via the isomorphism $a_i \mapsto a_i$ and $x_i \mapsto b_{i-1}$. 
\end{enumerate}
Because we have restricted to the single parameter case, the conditions $q_i=q_{\sigma(i-1)}$ and $p_{\sigma(i)}=p_{i+1}$ are satisfied trivially. Then we may apply Theorem \ref{thm.ore} in each case.
\end{proof}

\section{Dimension four twisted graded Calabi--Yau algebras}
\label{sec.dim4}

Let $A$ be an Artin--Schelter regular algebra of (global) dimension 4 and generated in degree 1. Assume $A$ is a domain or $\GKdim(A) \geq 3$. By \cite[Proposition 1.4]{LPWZ}, $A$ is one of the following types:
\begin{enumerate}
    \item \label{d4-quad} $A$ is generated by four elements with six relations of degree 2,
    \item \label{d4-qc} $A$ is generated by three elements with two relations in degree 2 and two relations in degree 3, or
    \item \label{d4-ne} $A$ is generated by two elements with one relation of degree 3 and one relation of degree 4.
\end{enumerate}
Each of these corresponds to a specific minimal resolution of the trivial module $\kk_A$, as well as a specific Hilbert series. All of these types may be obtained as normal extensions of an Artin--Schelter algebra of dimension 3. Here we present examples of dimension four twisted graded Calabi--Yau algebras of dimension four that may also be obtained through normal extension, and which roughly correspond to each category. 

\subsection{Ore extensions}

Let $A=\kk Q/I$ be a twisted graded Calabi--Yau algebra of dimension 3.
A polynomial extension of $A$, which is trivially a normal extension, is a twisted graded Calabi--Yau algebra of dimension 4. More generally, one can take Ore extensions of the type $A[x;\sigma]$. 

Suppose $M$ is the adjacency matrix of $Q$ and $P$ the permutation matrix corresponding to $\mu_0^A$, so that $A$ has Hilbert series
\[    h_A(t) = (I-Mt + M^T t^{s-1} - Pt^s)\inv\]
where $s=3$ or $4$ \cite[Proposition~8.2]{RR1} (see also \cite[p.~255]{RR1}). Let $\sigma$ be a graded automorphism of $A$ and let $P_\sigma$ be the permutation matrix corresponding to $\sigma$ (so $(P_\sigma)_{ij} = \delta_{\sigma(i)j})$. Then $B=A[x;\sigma]$ has Hilbert series
\[ h_B(t) = (I-Mt + M^T t^{s-1} - Pt^s)\inv(I-P_\sigma\inv t)\inv. \]
Hence,
\[ 
    h_B = \begin{cases}
        (I - (M+P_\sigma\inv)t^2 + (MP_\sigma\inv + M^T)t^2 - (M^T P_\sigma\inv + P)t^3 + PP_\sigma\inv t^4)\inv & s=3 \\
        (I - (M+P_\sigma\inv)t^2 + MP_\sigma\inv t^2 + M^T t^3 - (M^T P_\sigma\inv + P)t^4 + PP_\sigma\inv t^5)\inv & s=4.
    \end{cases}
\]
The first corresponds to type \eqref{d4-quad} above in the sense that all relations will be quadratic. The second corresponds to type \eqref{d4-qc} as there will be quadratic and cubic relations. 

\subsection{Homogenized down-up algebras}
\label{sec.homog}

Here we give an alternate way to produce dimension four twisted graded Calabi--Yau algebras of type \eqref{d4-qc}.

When $\gamma \neq 0$, the relations for the quiver down-up algebra $\cH_n(\alpha,\beta,\gamma)$ can be homogenized resulting in a twisted Calabi--Yau algebra of dimension four. In the connected case, this was considered by Cassidy \cite{cass-homog}.

Let $A$ be a filtered algebra with filtration $\cF=\{F_n(A)\}$. The \emph{Rees ring} corresponding to the filtration $\cF$ is the subring of $A[z]$ defined as $\cR_{\cF}(A) = \sum F_n(A) z^n$. As with the associated graded algebra, we drop the subscript if the filtration is understood.

\begin{lemma}\label{lem.rees-pwd}
If $A$ is a filtered piecewise domain, then $\cR(A)$ is a piecewise domain.
\end{lemma}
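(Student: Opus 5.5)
The plan is to work directly inside the polynomial ring $A[z]$, with $z$ a central indeterminate, using the complete set of orthogonal idempotents of $A$. The first thing to settle is what ``filtered piecewise domain'' means and how the idempotents sit in $\cR(A)$. I take it to mean that $A$ is a piecewise domain with respect to idempotents $e_0,\dots,e_{n-1}$, and that the filtration is compatible: $e_i \in F_0(A)$ and $e_i F_k(A) e_j \subseteq F_k(A)$. Then $e_i = e_i z^0$ lies in $\cR(A)$, the $e_i$ remain a complete set of orthogonal idempotents there, and
\[ e_i \cR(A) e_j = \sum_k (e_i F_k(A) e_j) z^k. \]
This holds because $e_i(fz^k)e_j = (e_ife_j)z^k$, and $e_i f e_j \in F_k(A)$ whenever $f \in F_k(A)$.

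The main step is to show that $A[z]$ is itself a piecewise domain with respect to the $e_i$; the lemma then follows because $\cR(A)$ is a subring containing the $e_i$. Take nonzero $f = \sum_{k} f_k z^k \in e_i A[z] e_j$ and $g = \sum_\ell g_\ell z^\ell \in e_j A[z] e_k$. Since $z$ is central and commutes with the $e_i$, every coefficient satisfies $f_k \in e_i A e_j$ and $g_\ell \in e_j A e_k$.

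Now let $f_K$ and $g_L$ be the top nonzero coefficients. The coefficient of $z^{K+L}$ in $fg$ is $f_K g_L$. Because $A$ is a piecewise domain, $f_K g_L \neq 0$, and hence $fg \neq 0$. This is just the usual leading-term argument, made piecewise.

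I do not expect a real obstacle. The only point needing care is the compatibility of the idempotents with the filtration, namely $e_i \in F_0$ and the Peirce components being preserved. Without it, $e_i$ might not even lie in $\cR(A)$. I would state this as part of the meaning of ``filtered piecewise domain'' in this setting. For the quiver algebras here it is automatic, since the filtration is by path length and the $e_i$ have degree $0$.
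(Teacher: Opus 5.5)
Your proposal is correct and matches the paper's argument. The paper's entire proof is the observation that $\cR(A)$ is a subring of $A[z]$; you have made explicit what that one-liner relies on, namely the leading-coefficient argument that $A[z]$ is a piecewise domain and the need for $e_i \in F_0(A)$ so that the $e_i$ are idempotents of $\cR(A)$. Your explicit description of $e_i\cR(A)e_j$ is harmless but not needed, since $e_i\cR(A)e_j \subseteq e_iA[z]e_j$ already suffices.
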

\begin{proof}
This is immediate as $\cR(A)$ is a subring of $A[z]$.
\end{proof}

\begin{definition}\label{defn.hqdu}
Fix $n \in \ZZ_+$ and let $\alpha,\beta,\gamma \in \kk^n$. The corresponding \emph{homogenized quiver down-up algebra} $\cR_n(\alpha,\beta,\gamma)$ is the Rees ring of $\cH_n(\alpha,\beta,\gamma)$ corresponding to the standard filtration.
\end{definition}

Explicitly, let $Q$ be the following quiver: 
\begin{equation}\label{eq.homogA}
\begin{tikzcd}
    &         &   	& e_0 \arrow[llld, "u_0"] \arrow[loop above,looseness=10, "z_0"] \arrow[rrrd, "d_{n-1}", shift left=2] 	&           &		&          \\
e_1 \arrow[rr, "u_1"] \arrow[rrru, "d_0", shift left=2] \arrow[loop left,looseness=10, "z_1"] 	&        	&                   	e_2 \arrow[r, "u_2"]	\arrow[ll, "d_1", shift left=2]		\arrow[loop below,looseness=10, "z_2"]	 		& \cdots   \arrow[r, "u_{n-3}"]  \arrow[l, "d_2", shift left=2] &	e_{n-2} \arrow[rr, "u_{n-2}"]	\arrow[l, "d_{n-3}", shift left=2] \arrow[loop below,looseness=10, "z_{n-2}"] &           	& e_{n-1} \arrow[lllu, "u_{n-1}"] \arrow[ll, "d_{n-2}", shift left=2] \arrow[loop right,looseness=10, "z_{n-1}"]
\end{tikzcd}
\end{equation}
Then $\cR(\alpha,\beta,\gamma)$ is the quotient of $\kk Q$ by the relations
\begin{align}\label{eq.typeHA}
	d_{i-1}u_{i-1}u_i &= \alpha_i u_id_iu_i + \beta_i u_iu_{i+1}d_{i+1} +\gamma_i u_i z_{i+1}^2 \\
	d_id_{i-1}u_{i-1} &= \alpha_i d_iu_id_i + \beta_i u_{i+1}d_{i+1}d_i +\gamma_i d_i z_i^2 \\
    z_i u_i &= u_i z_{i+1} \qquad z_{i+1} d_i = d_i z_i,
\end{align}
for $i\in Q_0$.

\begin{lemma}
Fix $n \in \ZZ_+$ and let $\alpha,\beta,\gamma \in \kk^n$ with $\beta_i \neq 0$ for all $i$. Then $\cR=\cR_n(\alpha,\beta,\gamma)$ is twisted graded Calabi--Yau of dimension four with matrix Hilbert series 
\[h_{\cR}(t) = (I-Mt+M^Tt^3+Pt^4)\inv (I-It)\inv.\]
\end{lemma}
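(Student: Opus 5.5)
The plan is to realize $\cR=\cR_n(\alpha,\beta,\gamma)$ as a normal extension of $\cH_n(\alpha,\beta,\gamma)$ by the central-like element $z=\sum_i z_i$, and then apply the lifting results of Section \ref{sec.lift}. The relations $z_iu_i=u_iz_{i+1}$ and $z_{i+1}d_i=d_iz_i$ show that $z$ commutes with every arrow. Hence $z$ is central, so normal with $\phi_z=\mathrm{id}$, and homogeneous of degree one. Setting $z=0$ in \eqref{eq.typeHA} gives exactly the graded quiver down-up algebra $\cH_n(\alpha,\beta)$, so $\cR/(z)\iso\cH_n(\alpha,\beta)$. The latter is twisted graded Calabi--Yau of dimension three when all $\beta_i\neq0$: see \cite[Theorem 3.6]{GK1}, or Theorem \ref{thm.typeA} in the single-parameter case. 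Its Hilbert series has the form \eqref{eq.dim3_hilb} with $d=4$ and type $(M,P,4)$.

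The key step is regularity of $z$. Since $\cR$ is the Rees ring $\sum F_k(\cH)z^k\subset\cH[z]$ of $\cH=\cH_n(\alpha,\beta,\gamma)$, and $z$ is a central regular element of $\cH[z]$, the element $z$ is regular in the subring $\cR$. We still need to identify the presented algebra \eqref{eq.typeHA} with the Rees ring. For this, I would show that the paths \eqref{eq.Aform}, multiplied by powers $z^k$, span the presented algebra. This is a Gröbner/diamond argument: order the $d$'s above the $u$'s above the $z$'s, as in \cite[Proposition 1.1]{GK1}. The new ambiguities involve the commutation relations with $z$, and these resolve trivially because $z$ is central. Comparing with the basis of $\cR$ coming from the basis \eqref{eq.Aform} of $\cH$ (a standard filtration with $\gr\cH=\cH_n(\alpha,\beta)$), the surjection from the presented algebra to $\cR$ is an isomorphism. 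Then Lemma \ref{lem.hilb}\eqref{hilb1} with $m=1$ and $P_z=I$ gives
\[ h_{\cR}(t)=h_{\cH_n(\alpha,\beta)}(t)(I-It)^{-1}, \]
which is the stated Hilbert series once $h_{\cH_n(\alpha,\beta)}$ is inserted.

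With regularity established, we apply Proposition \ref{prop.lift}, since $\cR/J(\cR)=\kk^n$ is separable. This shows $\cR$ is generalized AS regular of dimension four, hence twisted graded Calabi--Yau by \cite[Theorem 1.5]{RR2}. Equivalently, Theorem \ref{thm.lift} applies directly and gives $\mu^{\cR}$ restricting to $\mu^{\cH}$ on the quotient, as $\phi_z$ is trivial.

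The main obstacle is the identification of the presented algebra with the Rees ring, namely that no extra relations are needed. Concretely, we must check that the filtration on $\cH_n(\alpha,\beta,\gamma)$ has associated graded exactly $\cH_n(\alpha,\beta)$, i.e.\ that the $\gamma$-terms do not collapse the PBW-type basis. I would handle this by rerunning the ambiguity check of \cite[Proposition 1.1]{GK1} in the homogenized setting, where every $\gamma$-term carries a $z^2$.
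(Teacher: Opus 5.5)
Your argument is correct and has the same skeleton as the paper's proof. In both, $z=\sum z_i$ is a regular normal (in fact central) element of degree one with $\cR/(z)\iso\cH_n(\alpha,\beta)$. Proposition~\ref{prop.lift} then gives the twisted Calabi--Yau property, and Lemma~\ref{lem.hilb} with $P_z=I$ gives the Hilbert series.

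The real difference is the regularity step. The paper gets it from Lemma~\ref{lem.rees-pwd}: $\cR$ is a piecewise domain, so $z_if=0$ with $f\in e_i\cR e_j$ forces $f=0$. This requires knowing that $\cH_n(\alpha,\beta,\gamma)$ is itself a piecewise domain, an input from \cite{GK1} that uses $\beta_i\neq0$. In return it also shows that $\cR$ is a piecewise domain. You use only that $z$ is a central indeterminate in $\cH_n(\alpha,\beta,\gamma)[z]\supseteq\cR$. That is elementary and valid for any filtered algebra, but it says nothing about zero divisors in $\cR$.

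Both routes depend on the presented algebra \eqref{eq.typeHA} really being the Rees ring, i.e.\ on $\gr\cH_n(\alpha,\beta,\gamma)\iso\cH_n(\alpha,\beta)$. The paper takes this for granted. Your spanning-plus-dimension-count argument, using the parameter-independent basis \eqref{eq.Aform} of \cite[Proposition 1.1]{GK1}, is the right way to supply it. Spanning needs only the rewriting rules, and independence comes from the comparison with $\cR$.

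One correction concerns the last step. What Lemma~\ref{lem.hilb} actually yields is $h_{\cR}(t)=(I-Mt+PM^Tt^3-Pt^4)\inv(I-It)\inv$, with $P=I$ because $\mu^{\cH}$ fixes every vertex. This is not literally the displayed formula: its $+Pt^4$ is a sign slip. For $n=1$ the series must be $(1-t)^{-3}(1-t^2)\inv$, whereas $1-2t+2t^3+t^4$ does not even vanish at $t=1$. You should record the corrected formula rather than claim agreement with the stated one.
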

\begin{proof}
By Lemma \ref{lem.rees-pwd}, $\cR$ is a piecewise domain. Hence, $z=\sum z_i$ is a regular element and it is clear that $\cR/(z) \iso \cH_n(\alpha,\beta)$. Then $\cR$ is twisted Calabi--Yau by Proposition \ref{prop.lift} and the Hilbert series is due to Lemma \ref{lem.hilb}.
\end{proof}

\subsection{Normal extensions of quiver down-up algebras}
\label{sec.extdu}

A way to realize Artin--Schelter regular algebras of type \eqref{d4-ne} is to take normal extensions of down-up algebras. We follow a similar strategy applied to a special family of quiver down-up algebras.

Fix $n \in \ZZ_+$ and set $\cH=\cH_n(0,-1)$. Define
\begin{align}\label{eq.omega4} 
\omega = \sum_{i \in \dblQ_0} d_{i-1}u_{i-1}u_i + u_iu_{i+1}d_{i+1}.
\end{align}

Define $\phi \in \Aut(\kk\dblQ)$ by
\[
\phi(e_i) = e_{i-1}, \quad
\phi(u_i) = u_{i-1}, \quad
\phi(d_i) = d_{i-1},
\]
for all $i \in \dblQ_0$. Now in $\kk\dblQ$ we have
\begin{align*}
0 = \omega u_i - \phi(u_i) \omega
    &= \omega u_i - u_{i-1} \omega
    = ( d_{i-2}u_{i-2}u_{i-1}u_i+ u_{i-1}u_id_iu_i ) - 
        ( u_{i-1}d_{i-1}u_{i-1}u_i + u_{i-1}u_iu_{i+1}d_{i+1}) \\
    &= d_{i-2}u_{i-2}u_{i-1}u_i - u_{i-1}d_{i-1}u_{i-1}u_i
        + u_{i-1}u_id_iu_i - u_{i-1}u_iu_{i+1}d_{i+1} \\
0 = \omega d_i - \phi(d_i) \omega
    &= \omega d_i - d_{i-1} \omega
    = (d_{i-1}u_{i-1}u_id_i + u_iu_{i+1}d_{i+1}d_i)
        - (d_{i-1}d_{i-2}u_{i-2}u_{i-1} + d_{i-1}u_{i-1}u_id_i) \\
    &= -( d_{i-1}d_{i-2}u_{i-2}u_{i-1} - u_iu_{i+1}d_{i+1}d_i).
\end{align*}
We also keep the other relation type from $\cH$, so the relation $0=\omega d_i- \phi(d_i)\omega$ is superfluous. Hence, in this case, $\cD$ is the quotient of $\kk\dblQ$ by the relations
\begin{equation}\label{eq.Drelns}
\begin{aligned}
    d_id_{i-1}u_{i-1} &= -u_{i+1}d_{i+1}d_i \\
    d_iu_iu_{i+1}u_{i+2} &= u_{i+1}d_{i+1}u_{i+1}u_{i+2}
        - u_{i+1}u_{i+2}d_{i+2}u_{i+2} + u_{i+1}u_{i+2}u_{i+3}d_{i+3}.
\end{aligned}
\end{equation}

In the next lemma, we interpret the empty product (say from $j=0$ to $-1$) as $1$.

\begin{lemma}
Fix $n \in \ZZ_+$ and let $\cD$ be as defined above. A $\kk$-basis for $\cD$ consists of paths of the form:
\begin{align}\label{eq.Dbasis}
\left( \prod_{j=0}^{k_1-1} u_{i+j} \right)
\left( \prod_{j=0}^{k_2-1} d_{i+(k_1-1)+j}u_{i+(k_1-1)+j}u_{i+k_1+j} \right)
\left( d_{i+k_1+(k_2-1)}u_{i+k_1+(k_2-1)} \right)^{k_3}
\left( \prod_{j=0}^{k_4-1} d_{i+k_1+(k_2-1)-j} \right)
\end{align}
with $k_1,k_2,k_3,k_4 \in \ZZ_{\geq 0}$.  Consequently, $h_{\cD}^{\tot}(t) = n(1-t)^{-2}(1-t^2)\inv(1-t^3)\inv$.
\end{lemma}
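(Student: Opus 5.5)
I will use the Diamond Lemma with a path ordering in which each relation in \eqref{eq.Drelns} has a distinguished leading monomial. As in Lemmas \ref{lem.Bbasis} and \ref{lem.Cbasis}, I order arrows by $d_0 > \cdots > d_{n-1} > u_0 > \cdots > u_{n-1}$ and use deg-lex. The leading term of the first relation is then $d_id_{i-1}u_{i-1}$. For the second relation, $d_iu_iu_{i+1}u_{i+2}$ begins with a $d$ while every other term begins with $u$, so $d_iu_iu_{i+1}u_{i+2}$ is the leading term. This gives the reduction system
\[ d_id_{i-1}u_{i-1} \mapsto -u_{i+1}d_{i+1}d_i, \qquad d_iu_iu_{i+1}u_{i+2} \mapsto u_{i+1}d_{i+1}u_{i+1}u_{i+2} - u_{i+1}u_{i+2}d_{i+2}u_{i+2} + u_{i+1}u_{i+2}u_{i+3}d_{i+3}. \]

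Next I check the overlap ambiguities. A suffix of one leading word must be a prefix of another, so the possibilities are:
\begin{enumerate}
\item $d_{i+1}d_iu_i$ followed by $u_{i+1}u_{i+2}$, giving the overlap $d_{i+1}d_iu_iu_{i+1}u_{i+2}$ of the first and second leading words;
\item $d_{i+1}d_id_{i-1}u_{i-1}$, a self-overlap of the first leading word;
\item $d_{i+1}d_i u_i u_{i+1} u_{i+2}$ read with the first word $d_{i+1}d_iu_i$ and the second starting at $d_i$, which is the same as case (1).
\end{enumerate}
The second leading word ends in $u$, and no leading word begins with $u$, so it cannot overlap a following word.

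The hardest step is resolving ambiguity (1): the expression $d_{i+1}d_iu_iu_{i+1}u_{i+2}$ has degree five, and both reductions produce many terms that must be reduced again. I would organize the computation using the fact that $\omega$ is normal, $\omega a = \phi(a)\omega$ in $\cD$ by construction. Concretely, I would rewrite both reductions in terms of $\omega$ and the first (type $\cH$) relation, and show that the difference lies in the span of $\omega u - \phi(u)\omega$ times arrows, so that it reduces to zero. Failing that, I would reduce directly, pushing every $d$ to the right until each term matches the normal form \eqref{eq.Dbasis}, and then compare. Ambiguity (2) is routine: both reductions give $u_{i+2}d_{i+2}d_{i+1}d_i$ up to a sign, using the first relation twice.

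Finally I describe the irreducible paths. These are the paths avoiding $d_id_{i-1}u_{i-1}$ and $d_iu_iu_{i+1}u_{i+2}$. After an initial run of $u$'s, any $d$ must be followed by one of the following: nothing further, more $d$'s (but then never $u$ again, since $d d u$ is forbidden), or $u$ followed by at most one more $u$ before the next $d$. This yields exactly the blocks $du u$ (in $k_2$), $du$ (in $k_3$) and a tail of $d$'s (in $k_4$). One checks that a block $du$ cannot be followed by $duu$, because $d_iu_id_iu_iu_{i+1}$ is itself a prefix of an allowed block. The ordering constraints on the blocks are what make the normal form unique. Paths from a fixed vertex then correspond bijectively to monomials in variables of degrees $1,3,2,1$, which gives $h_{\cD}^{\tot}(t) = n(1-t)^{-2}(1-t^2)^{-1}(1-t^3)^{-1}$.
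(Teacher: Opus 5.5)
There is a genuine gap. The single overlap ambiguity, $d_{i+1}d_iu_iu_{i+1}u_{i+2}$, does \emph{not} resolve, so your two-rule system is not confluent and the Diamond Lemma does not apply to it.

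\begin{itemize}
\item Applying the first rule three times gives $-u_{i+2}u_{i+3}u_{i+4}d_{i+4}d_{i+3}$.
\item Applying the second rule to $d_iu_iu_{i+1}u_{i+2}$, and again to the resulting term $d_{i+1}u_{i+1}u_{i+2}u_{i+3}d_{i+3}$, gives an outcome whose difference from the first is
\begin{align*}
&d_{i+1}u_{i+1}d_{i+1}u_{i+1}u_{i+2} - d_{i+1}u_{i+1}u_{i+2}d_{i+2}u_{i+2} + u_{i+2}d_{i+2}u_{i+2}u_{i+3}d_{i+3}\\
&\qquad - u_{i+2}u_{i+3}d_{i+3}u_{i+3}d_{i+3} + 2u_{i+2}u_{i+3}u_{i+4}d_{i+4}d_{i+3}.
\end{align*}
\end{itemize}
This is a nonzero combination of distinct paths, none of which contains either of your leading words.

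Appealing to normality of $\omega$ cannot fix this. The difference of the two reductions of an ambiguity always lies in the ideal; the Diamond Lemma needs it to reduce to zero under the chosen rules, and here it does not. Separately, your ambiguity (2) is not an overlap at all: $d_id_{i-1}u_{i-1}$ has no self-overlap, and $d_{i+1}d_id_{i-1}u_{i-1}$ contains only one leading word.

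The missing step is the one the paper takes:
\begin{itemize}
\item Adjoin the relation above as a third rule, with leading word $d_{i+1}u_{i+1}d_{i+1}u_{i+1}u_{i+2}$ (shape $duduu$).
\item Check the two new overlaps, of shapes $dduduu$ and $duduuu$; these do resolve.
\end{itemize}

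Your last paragraph silently uses this third rule. With only $ddu$ and $duuu$ forbidden, $d_iu_id_iu_iu_{i+1}$ is irreducible, so nothing stops a block $du$ from being followed by $duu$. Saying it ``is a prefix of an allowed block'' is not a reason to exclude it.

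A count shows the problem. Each vertex has exactly one outgoing $u$ and one outgoing $d$, so paths from a vertex correspond to words in $u,d$. There are $17$ words of length five avoiding $ddu$ and $duuu$, but only $16$ of the stated form, which is the coefficient of $t^5$ in $(1-t)^{-2}(1-t^2)^{-1}(1-t^3)^{-1}$. The extra word is $duduu$.

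Once the third leading word is in place, your description of the irreducible paths and your Hilbert series count go through as written.
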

\begin{proof}
We order the arrows $d_0 > d_1 > \cdots > d_{n-1} > u_0 > u_1 > \cdots > u_{n-1}$. The leading terms under this ordering are $d_id_{i-1}u_{i-1}$ and $d_iu_iu_{i+1}u_{i+2}$. This results in one type of ambiguity, $d_id_{i-1}u_{i-1}u_iu_{i+1}$, which we attempt to resolve:
\begin{align*}
d_i&(d_{i-1}u_{i-1}u_iu_{i+1}) - (d_id_{i-1}u_{i-1})u_iu_{i+1} \\
    &= d_i(u_{i}d_{i}u_{i}u_{i+1} - u_{i}u_{i+1}d_{i+1}u_{i+1} + u_{i}u_{i+1}u_{i+2}d_{i+2}) - (-u_{i+1}u_{i+2}u_{i+3}d_{i+3}d_{i+2}) \\
    &= d_iu_{i}d_{i}u_{i}u_{i+1} - d_iu_{i}u_{i+1}d_{i+1}u_{i+1} 
    + (u_{i+1}d_{i+1}u_{i+1}u_{i+2} - u_{i+1}u_{i+2}d_{i+2}u_{i+2} \\
    &\qquad + u_{i+1}u_{i+2}u_{i+3}d_{i+3})d_{i+2} 
    + u_{i+1}u_{i+2}u_{i+3}d_{i+3}d_{i+2} \\
    &= d_iu_{i}d_{i}u_{i}u_{i+1} - d_iu_{i}u_{i+1}d_{i+1}u_{i+1} 
    + u_{i+1}d_{i+1}u_{i+1}u_{i+2}d_{i+2} - u_{i+1}u_{i+2}d_{i+2}u_{i+2}d_{i+2} \\
    &\qquad + u_{i+1}u_{i+2}u_{i+3}d_{i+3}d_{i+2} 
    + u_{i+1}u_{i+2}u_{i+3}d_{i+3}d_{i+2}.
\end{align*}
This gives a new relation:
\[
d_iu_{i}d_{i}u_{i}u_{i+1} = d_iu_{i}u_{i+1}d_{i+1}u_{i+1} 
    - u_{i+1}d_{i+1}u_{i+1}u_{i+2}d_{i+2} + u_{i+1}u_{i+2}d_{i+2}u_{i+2}d_{i+2}
    - 2u_{i+1}u_{i+2}u_{i+3}d_{i+3}d_{i+2}.
\]
Hence, our Grobner basis has three leading terms: $d_id_{i-1}u_{i-1}$, $d_iu_iu_{i+1}u_{i+2}$, and $d_iu_{i}d_{i}u_{i}u_{i+1}$. The two (new) ambiguities are $d_id_{i-1}u_{i-1}d_{i-1}u_{i-1}u_i$ and $d_iu_id_iu_iu_{i+1}u_{i+2}$, and these resolve through tedious computation that we omit. 

Hence, a $\kk$-basis consists of paths that avoid the leading terms. It is clear that these are the basis in \eqref{eq.Dbasis}. The claim on the total Hilbert series follows immediately.
\end{proof}

\begin{proposition}
The algebra $\cD$ defined above is twisted graded Calabi--Yau of dimension four. 
\end{proposition}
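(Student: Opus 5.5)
The plan is to realize $\cD$ as a normal extension of $\cH=\cH_n(0,-1)$ by the degree-three element $\omega$ of \eqref{eq.omega4}, and then lift the twisted Calabi--Yau property from $\cH$ to $\cD$ using Proposition \ref{prop.lift}.

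First, $\cH$ is twisted graded Calabi--Yau of dimension three. Here $\alpha=0$ and $\beta=-1\neq 0$, so we may appeal either to \cite{GK1} in the multiparameter setting or directly to Theorem \ref{thm.typeA}, after checking that \eqref{eq.alphabeta} is solvable. With all $\alpha_i=0$ and all $\beta_i=-1$, Proposition \ref{prop.alphabeta} gives a solution: the alternating products of the $\beta_i$ agree when $n$ is even, and a solution always exists when $n$ is odd. Since $\cD/J(\cD)\iso\kk^n$ is separable, twisted Calabi--Yau and generalized AS regular coincide for both algebras.

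Next I would verify that $\cD/(\omega)\iso\cH$. Modulo $\omega$, the relation $\omega u_{i-1}-\phi(u_{i-1})\omega$ becomes a consequence of $\omega$ itself. Indeed, the element $d_iu_iu_{i+1}+u_{i+1}u_{i+2}d_{i+2}$ is exactly $e_{i+1}\omega e_{i+2}$ after reindexing. So killing $\omega$ imposes the remaining type-$A$ relation of $\cH_n(0,-1)$, namely $d_{i-1}u_{i-1}u_i=-u_iu_{i+1}d_{i+1}$, together with the relation $d_id_{i-1}u_{i-1}=-u_{i+1}d_{i+1}d_i$ already present in \eqref{eq.Drelns}. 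This identifies the quotient with $\cH$. Normality of $\omega$ holds by construction, since the defining relations of $\cD$ include $\omega u_i=\phi(u_i)\omega$. The relation $\omega d_i=\phi(d_i)\omega$ is the first line of \eqref{eq.Drelns}, as computed before the statement.

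The key step is regularity of $\omega$, and I would use Lemma \ref{lem.hilb}\eqref{hilb1} with $m=3$. From \eqref{eq.Aform} (see \cite[Proposition 1.1]{GK1}), $h_{\cH}^{\tot}(t)=n(1-t)^{-2}(1-t^2)\inv$. By the preceding lemma, $h_{\cD}^{\tot}(t)=n(1-t)^{-2}(1-t^2)\inv(1-t^3)\inv$, so
\[ h_{\cD}^{\tot}(t)=h_{\cH}^{\tot}(t)(1-t^3)\inv. \]
Lemma \ref{lem.hilb} then gives that $\omega$ is regular. Proposition \ref{prop.lift} now shows that $\cD$ is generalized AS regular of dimension $3+1=4$, hence twisted graded Calabi--Yau of dimension four by \cite[Theorem 1.5]{RR2}. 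Equivalently, Theorem \ref{thm.lift} applies directly.

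The main obstacle is the Gröbner basis computation behind the basis lemma, since the resolution of the two new ambiguities was omitted there. I would rely on that lemma as stated. If it needed independent confirmation, I would instead argue regularity by comparing the graded dimensions of $\cD$ in low degrees with the formula. A second point to check carefully is that $\cD/(\omega)$ has no extra relations beyond those of $\cH$. This amounts to verifying that the degree-four relation in \eqref{eq.Drelns} lies in the ideal generated by $\omega$ and the cubic type-$A$ relations.
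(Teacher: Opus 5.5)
Your proposal is correct and is essentially the paper's argument: identify $\cD/(\omega)\iso\cH$, deduce regularity of $\omega$ from $h_{\cD}^{\tot}=h_{\cH}^{\tot}(1-t^3)\inv$ via Lemma~\ref{lem.hilb}, and lift the twisted Calabi--Yau property with Proposition~\ref{prop.lift} (or Theorem~\ref{thm.lift}), relying, as the paper does, on the basis lemma for $\cD$. One small correction: $\omega d_i-\phi(d_i)\omega$ is not literally the first line of \eqref{eq.Drelns} but a degree-four consequence of it (both of its terms reduce to $-u_id_id_{i-1}u_{i-1}$ using that relation), which is all that normality of $\omega$ requires.
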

\begin{proof}
This follows as before since $\cD/(\omega) \iso \cH$ with $\omega$ as in \eqref{eq.omega4} and $\cD$ has the correct Hilbert series.
\end{proof}

\end{document}